\newtheorem{theorem}{Theorem}[section]
\newtheorem{lemma}{Lemma}[section]
\newtheorem{remark}{Remark}[section]
\numberwithin{equation}{section}
\begin{document}

\newcommand{\al}{\alpha}
\newcommand{\fy}{\varphi}
\newcommand{\la}{\lambda}
\newcommand{\ep}{\epsilon}

\newcommand{\vth}{\vartheta}
\newcommand{\vtht}{{\widetilde \vartheta}}
\newcommand{\rlh}{{\widetilde \varrho}}

\def\tribar{\vert\thickspace\!\!\vert\thickspace\!\!\vert}
\def\Etilh{{\bar{E}_h}}
\def\PD{P(\partial_t)}

\def\dH#1{\dot H^{#1}(\Omega)}
\def\normh#1#2{\tribar #1 \tribar_{\dot H^{#2}(\Omega)}}
\def\vecal{{\vec{\al}}}
\def\pa{\partial}

\title[Galerkin FEM for multi-term time-fractional diffusion]
{The Galerkin Finite Element Method for A Multi-term Time-Fractional Diffusion equation}
\author {Bangti Jin \and Raytcho Lazarov \and Yikan Liu \and Zhi Zhou}
\address{Department of Mathematics, University of California, Riverside,
900 University Ave., Riverside, CA 92521, USA (bangti.jin@gmail.com)}
\address{Department of Mathematics, Texas A\&M University, College Station, TX 77843-3368, USA \\(lazarov@math.tamu.edu)}
\address{Graduate School of Mathematical Sciences, The University of Tokyo, 3-8-1 Komaba, Meguro,
Tokyo 153, Japan (ykliu@ms.u-tokyo.ac.jp)}
\address{Department of Mathematics, Texas A\&M University, College Station, TX 77843-3368, USA \\(zzhou@math.tamu.edu)}
\date{started August, 2013; today is \today}

\begin{abstract}
We consider the initial/boundary value problem for a diffusion equation
involving multiple time-fractional derivatives on a bounded convex polyhedral domain.
We analyze a space semidiscrete scheme based on the standard Galerkin finite element method
using continuous piecewise linear functions. Nearly optimal error estimates for both cases of initial
data and inhomogeneous term are derived, which cover both smooth and nonsmooth data.
Further we develop a fully discrete scheme based on a finite difference discretization
of the time-fractional derivatives, and discuss its stability and error estimate.
Extensive numerical experiments for one and two-dimension problems confirm the convergence rates
of the theoretical results.\\
{\bf Keywords}: multi-term time-fractional diffusion equation, finite element method,
error estimate, semidiscrete scheme, Caputo derivative
\end{abstract}
\maketitle

\section{introduction}\label{sec:intro}
We consider the following initial/boundary value problem for a multi-term time fractional diffusion
equation in $u(x,t)$:
\begin{alignat}{3}\label{eqn:goveq}
   \PD u-\Delta u&= f,&&\quad \text{in  } \Omega&&\quad T \ge t > 0,\notag\\
   u&=0,&&\quad\text{on}\  \partial\Omega&&\quad T \ge t > 0,\\
    u(0)&=v,&&\quad\text{in  }\Omega,&&\notag
\end{alignat}
where $\Omega$ denotes a bounded convex polygonal domain in $\mathbb R^d\,(d=1,2,3)$ with a boundary
$\partial\Omega$, $f$ is the source term, and the initial data $v$ is a given function on $\Omega$ and
$T>0$ is a fixed value. Here the differential operator $\PD$ is defined by
\begin{equation*}
\PD=\partial_t^{\al}+\sum_{i=1}^m b_i \partial_t^{\al_i},
\end{equation*}
where $0< \al_m  \le ...\le \al_1<\al<1$ are the orders of the fractional derivatives, $b_i > 0$,
$i=1,2,...,m$, with the left-sided Caputo fractional derivative $\partial_t^\beta u$ being defined by
(cf. \cite[pp.\,91]{KilbasSrivastavaTrujillo:2006})
\begin{equation}\label{eqn:McT}
   \partial_t^\beta u(t)= \frac{1}{\Gamma(1-\beta)} \int_0^t(t-\tau)^{-\beta}\frac{d}{d\tau}u(\tau) d\tau,
\end{equation}
where $\Gamma(\cdot)$ denotes the Gamma function.

In the case of $m=0$, the model \eqref{eqn:goveq} reduces to its single-term counterpart
\begin{equation}\label{eqn:single}
  \partial_t^\alpha u -\Delta u = f\quad \mbox{ in } \Omega\times(0,T].
\end{equation}
This model has been studied extensively from different aspects due to its extraordinary
capability of modeling anomalous diffusion phenomena in highly heterogeneous aquifers and
complex viscoelastic materials \cite{AdamsGelhar:1992,Nigmatulin:1986}.
It is the fractional analogue of the classical diffusion equation:
with $\alpha=1$, it recovers the latter, and thus inherits some of its analytical properties.
However, it differs considerably from the latter
in the sense that, due to the presence of the nonlocal fractional
derivative term, it has limited smoothing property in space and slow asymptotic decay in
time \cite{SakamotoYamamoto:2011}, which in turn also impacts related numerical analysis
\cite{JinLazarovZhou:2013} and inverse problems \cite{JinRundell:2012,SakamotoYamamoto:2011}.

The model \eqref{eqn:goveq} was developed to improve the modeling accuracy of the single-term
model \eqref{eqn:single} for describing anomalous diffusion. For example, in
\cite{SchumerBensonMeerschaertBaeumer:2003}, a two-term fractional-order diffusion model was
proposed for the total concentration in solute transport, in order to distinguish explicitly the mobile and
immobile status of the solute using fractional dynamics. The kinetic equation with two fractional derivatives
of different orders appears also quite naturally when describing subdiffusive motion in velocity
fields \cite{MetzlerKlafterSokolov:1998}; see also \cite{KellyMcGoughMeerschaert:2008}
for discussions on the model for wave-type phenomena.

There are very few mathematical studies on the model \eqref{eqn:goveq}. Luchko \cite{Luchko:2011}
established a maximum principle for problem \eqref{eqn:goveq}, and constructed a generalized solution
for the case $f\equiv0$ using the multinomial Mittag-Leffler function. Jiang et al
\cite{JiangLiuTurnerBurrage:2012b} derived analytical solutions for the diffusion equation with
fractional derivatives in both time and space. Li and Yamamoto \cite{LiYamamoto:2013} established
existence, uniqueness, and the H\"{o}lder regularity of the solution using a fixed point argument
for problem \eqref{eqn:goveq} with variable coefficients $\{b_i\}$. Very recently, Li et al
\cite{LiLiuYamamoto:2013} showed the uniqueness and continuous dependence of the solution on the initial value
$v$ and the source term $f$, by exploiting refined properties of the multinomial Mittag-Leffler function.

The applications of the model \eqref{eqn:goveq} motivate the design and analysis of numerical schemes
that have optimal (with respect to data regularity) convergence rates. Such schemes are
especially valuable for problems where the solution has low regularity. The case
$m=0$, i.e., the single-term model \eqref{eqn:single}, has been extensively studied, and stability
and error estimates were provided; see \cite{LinXu:2007,ZhangSunWu:2011} for the finite difference
method, \cite{LiXu:2009,ZayernouriKardiadakis:2014} for the spectral method, \cite{McLeanThomee:2010,Mustapha:2011,
MustaphaMcLean:2013,JinLazarovZhou:2013,JinLazarovPasciakZhou:2013,JinLazarovPasciakZhou:2013a} for the
finite element method, and \cite{BrunnerLingYamamoto:2010,FuChenYang:2013} for meshfree methods based on
radial basis functions, to name a few. In particular, in \cite{JinLazarovPasciakZhou:2013a,JinLazarovPasciakZhou:2013,
JinLazarovZhou:2013}, the authors established almost optimal error estimates with respect to the regularity of
the initial data $v$ and the right hand side $f$ for a semidiscrete Galerkin scheme. These studies include the
interesting case of very weak data, i.e., $ v \in \dH {q}$ and $f\in L^\infty(0,T;\dH q)$ for $-1 < q \le 0$.

Numerical methods for the general multi-term case for an ordinary differential equation were considered in
\cite{Katsikadelis:2009,ElSayedElKallaZiada:2010}. In \cite{ZhaoXiaoXu:2013}, a scheme based on the
finite element method in space and a specialized finite difference method in time was proposed for \eqref{eqn:goveq},
and error estimates were derived. We also refer to  \cite{LiuMeerschaert:2013}
for a numerical scheme based on a fractional predictor-corrector method for the multi-term time fractional
wave-diffusion equation. The error analysis in these works is done under the assumption that
the solution is sufficiently smooth and therefore it excludes the case of low regularity solutions. This
is the main goal of the present study. However, the derivation of optimal with respect to the regularity error estimates
requires additional analysis of the properties of problem  \eqref{eqn:goveq}, e.g.,
stability, asymptotic behavior for $t \to 0^+$. Relevant results of this type have recently been
obtained in \cite{LiLiuYamamoto:2013}, which, however, are not enough for the analysis of the semidiscrete
Galerkin scheme, and hence in Section \ref{sec:prelim}, we make the necessary extensions.

Now we describe the semidiscrete Galerkin scheme. Let ${\{\mathcal{T}_h\}}_{0<h<1}$ be a family of shape regular
and quasi-uniform partitions of the domain $\Omega$ into $d$-simplexes, called finite elements,
with a maximum diameter $h$. The approximate solution $u_h$ is sought in the finite element
space $X_h$ of continuous piecewise linear functions over the triangulation $\mathcal{T}_h $
\begin{equation*}
  X_h =\left\{\chi\in H^1_0(\Omega): \ \chi ~~\mbox{is a linear function over}  ~~\tau,  
 \,\,\,\,\forall \tau \in \mathcal{T}_h\right\}.
\end{equation*}
The semidiscrete Galerkin FEM for problem \eqref{eqn:goveq}
is: find $ u_h (t)\in X_h$ such that
\begin{equation}\label{eqn:fem}
   {( \PD u_{h},\chi)}+ a(u_h,\chi)= {(f, \chi)}, \quad \forall \chi\in X_h,\ T \ge t >0, \quad u_h(0)=v_h,
\end{equation}
where $a(u,w)=(\nabla u, \nabla w) ~~ \text{for}\ u, \, w\in H_0^1(\Omega)$, and $v_h \in X_h$ is an
approximation of the initial data $v$. The choice of $v_h$ will depend on the smoothness of the initial
data $v$. We shall study the convergence of the semidiscrete scheme \eqref{eqn:fem} for the case of
initial data $ v \in \dH q$, $-1<q\leq 2$, and right hand side $f\in L^\infty(0,T;\dH q)$, $-1<q<1$.
The case of nonsmooth data, i.e., $-1<q\leq 0$, is very common in inverse problems and optimal control \cite{JinRundell:2012,
SakamotoYamamoto:2011}; see also \cite{XieZou:2005,JinLu:2012,CasasClasonKunisch:2013,CasasZuazua:2013} for the parabolic counterpart.

The goal of this work is to develop a numerical scheme based on the finite element approximation for
the model \eqref{eqn:goveq}, and provide a complete error analysis. We derive error estimates optimal
with respect to the data regularity for the semidiscrete scheme, and a convergence rate $O(h^2+\tau^{2
-\alpha})$ for the fully discrete scheme in case of a smooth solution. Specifically, our essential
contributions are as follows. First, we obtain an improved regularity result for the inhomogeneous
problem, by allowing less regular source term, cf. Theorem \ref{thm:regeps2}. This is achieved by
first establishing a new result, i.e., the complete monotonicity of the multinomial Mittag-Leffler
function, cf. Lemma \ref{lem:MMLcm}. Second, we derive nearly
optimal error estimates for a semidiscrete Galerkin scheme for both homogeneous and inhomogeneous
problems, cf. Theorems \ref{thm:SG-smooth}-\ref{thm:gal:l2}, which cover both smooth and nonsmooth data.
Third, we develop a fully discrete scheme based on a finite difference method in time, and establish
its stability and error estimates, cf. Theorem \ref{thm:estfull}. We note that the derived
error estimate for the fully discrete scheme holds only for smooth solutions.

The rest of the paper is organized as follows. In Section \ref{sec:prelim}, we recall the solution theory
for the model \eqref{eqn:goveq} for both homogeneous and inhomogeneous problems, using
properties of the multinomial Mittag-Leffler function. The readers not interested in the analysis may
proceed directly to Section \ref{sec:galerkin}. Almost optimal error estimates for their Galerkin
finite element approximations are given in Section \ref{sec:galerkin}. Then a fully discrete scheme based
on a finite difference approximation of the Caputo fractional derivatives is given in Section \ref{sec:fulldis},
and an error analysis is also provided. Finally, extensive numerical experiments
are presented to illustrate the accuracy and efficiency of the Galerkin scheme, and to verify the convergence
theory. Throughout, we denote by $C$ a generic constant, which may differ at different occurrences, but always
independent of the mesh size $h$ and time step size $\tau$.
\section{Solution theory}\label{sec:prelim}
In this part, we recall the solution theory for problem \eqref{eqn:goveq}.
We shall describe the solution representation using the multinomial Mittag-Leffler function,
and derive optimal solution regularity for the homogeneous and inhomogeneous problems.

\subsection{Multinomial Mittag-leffler function}\label{ssec:MultiML}

First we recall the multinomial Mittag-Leffler function, introduced in \cite{HadidLuchko:1996}.
For $0<\beta<2$, $0<\beta_i<1$ and $z_i\in \mathbb C$, $i=1,...,m$,
the multinomial Mittag-Leffler function $E_{(\beta_1,...,\beta_m),\beta}
(z_1,...,z_m)$ is defined by 
\begin{equation*}
    E_{(\beta_1,...,\beta_m),\beta}(z_1,...,z_m)=\sum_{k=0}^{\infty}
    \sum_{\substack{l_1+...+l_m=k\\l_1\ge0,...,l_m\ge0}} (k;l_1,...,l_m)
     \frac{\prod_{i=1}^m z_i^{l_i}}{\Gamma(\beta+\Sigma_{i=1}^m \beta_i l_i)},
\end{equation*}
where the notation $(k;l_1,...,l_m)$ denotes the multinomial coefficient, i.e.,
\begin{equation*}
  (k;l_1,...,l_m)=\frac{k!}{l_1!...l_m!}\quad \mbox{with } k = \sum_{i=1}^ml_i.
\end{equation*}

It generalizes the exponential function $e^z$: with $m=1$ and $\beta=\beta_1=1$, it reproduces
the exponential function $e^z$. It appears in the solution representation of problem
\eqref{eqn:goveq}, cf. \eqref{eqn:solrep} below. We shall need the following two important lemmas on the
function $E_{(\beta_1,...,\beta_m),\beta}(z_1,...,z_m)$, recently obtained in
\cite[Section 2.1]{LiLiuYamamoto:2013}.
\begin{lemma}\label{lem:MLbound}
Let $0<\beta<2$, $0<\beta_i<1$, $\beta_1>\max\{\beta_2,...,\beta_m\}$
and $\frac{\beta_1\pi}{2}<\mu<\beta_1\pi$.
Assume that there is $K >0$ such that $-K\le z_i<0$, $i=2,...,m$.
Then there exists a constant $C=C(\beta_1,...,\beta_m,\beta,K,\mu)>0$
such that
\begin{equation*} 
    E_{(\beta_1,...,\beta_m),\beta}(z_1,...,z_m)\le \frac{C}{1+|z_1|},
        \quad \quad\quad \mu\leq|\mathrm{arg}(z_1)|\leq \pi.
\end{equation*}
\end{lemma}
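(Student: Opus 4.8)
The plan is to represent $E_{(\beta_1,\dots,\beta_m),\beta}$ by a Hankel-type contour integral and to bound that integral, in parallel with the classical estimate $|E_{\alpha,\beta}(z)|\le C/(1+|z|)$ for the two-parameter Mittag--Leffler function $E_{\alpha,\beta}$. First I would fix the geometry: pick a ray angle $\theta_0$ with
\begin{equation*}
\frac{\pi}{2}<\theta_0<\frac{1}{\beta_1}\min\Bigl\{\frac{\pi}{2},\,\mu\Bigr\},
\end{equation*}
which is possible precisely because $\beta_1<1$ and $\mu>\beta_1\pi/2$, and let $\gamma_\delta$ be the Hankel contour consisting of the two rays $\{re^{\pm i\theta_0}:r\ge\delta\}$ together with the arc $\{\delta e^{i\psi}:|\psi|\le\theta_0\}$, oriented counterclockwise. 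The requirement $\theta_0>\pi/2$ makes $\mathrm{Re}\,\zeta<0$ on the rays away from the origin, so that the integrals below converge, and the radius $\delta=\delta(\beta_1,\dots,\beta_m,m,K)$ will be taken large enough that $\sum_{i=1}^m|z_i|\,|\zeta|^{-\beta_i}<1$ on $\gamma_\delta$; the latter is where the assumption $-K\le z_i<0$, hence $|z_i|\le K$, first enters.

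Second, I would insert the Hankel-loop formula $1/\Gamma(s)=\frac{1}{2\pi i}\int_{\gamma_\delta}e^\zeta\zeta^{-s}\,d\zeta$ into the series defining $E_{(\beta_1,\dots,\beta_m),\beta}$, interchange the (absolutely convergent, by the choice of $\delta$) summation and integration, collapse the inner sum via the multinomial theorem with $x_i=z_i\zeta^{-\beta_i}$, and sum the resulting geometric series. After multiplying numerator and denominator by $\zeta^{\beta_1}$ and using $\beta_1-\beta_1=0$, this gives
\begin{equation*}
E_{(\beta_1,\dots,\beta_m),\beta}(z_1,\dots,z_m)=\frac{1}{2\pi i}\int_{\gamma_\delta}\frac{e^\zeta\,\zeta^{\beta_1-\beta}}{\zeta^{\beta_1}-z_1-\sum_{i=2}^m z_i\,\zeta^{\beta_1-\beta_i}}\,d\zeta,
\end{equation*}
where the hypothesis $\beta_1>\max_{i\ge2}\beta_i$ guarantees that every exponent $\beta_1-\beta_i$ with $i\ge2$ is positive and strictly below $\beta_1$.

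The crux is a uniform lower bound for the denominator: $|\zeta^{\beta_1}-z_1-\sum_{i=2}^m z_i\zeta^{\beta_1-\beta_i}|\ge c(|\zeta|^{\beta_1}+|z_1|)$ for all $\zeta\in\gamma_\delta$, with $c>0$ depending only on the stated parameters. Since $|\arg\zeta|\le\theta_0$ on $\gamma_\delta$, both $\zeta^{\beta_1}$ and each term $-z_i\zeta^{\beta_1-\beta_i}$ (with $-z_i>0$) lie in the convex cone $\{|\arg w|\le\beta_1\theta_0\}$, whose half-angle $\beta_1\theta_0$ is less than $\pi/2$; hence their sum $S:=\zeta^{\beta_1}-\sum_{i=2}^m z_i\zeta^{\beta_1-\beta_i}$ lies in the same cone and $|S|\ge\mathrm{Re}\,S\ge\cos(\beta_1\theta_0)\,|\zeta|^{\beta_1}$. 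On the other hand $\arg z_1\in[\mu,\pi]\cup[-\pi,-\mu]$, which is separated from that cone by an angular gap of at least $\mu-\beta_1\theta_0>0$. Using the elementary fact that $|u-v|\ge c_0(\delta_0)\,(|u|+|v|)$ whenever $\arg u$ and $\arg v$ differ (mod $2\pi$) by at least $\delta_0\in(0,\pi]$, I conclude $|S-z_1|\ge c_0(|S|+|z_1|)\ge c(|\zeta|^{\beta_1}+|z_1|)$.

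Finally I would split the integral over $\gamma_\delta$ into the arc $|\zeta|=\delta$ and the two rays. On the arc $|e^\zeta|\le e^\delta$ and the arc length is $O(\delta)$; on the rays $\mathrm{Re}\,\zeta=r\cos\theta_0<0$, so $|e^\zeta|$ decays exponentially and $\int_\delta^\infty e^{r\cos\theta_0}r^{\beta_1-\beta}\,dr<\infty$. Dividing by the denominator bound of the previous step and using $|\zeta|^{\beta_1}+|z_1|\ge c(1+|z_1|)$ on $\gamma_\delta$ (taking $\delta\ge1$), both contributions are bounded by $C/(1+|z_1|)$ with $C$ depending on $\delta$, hence on $\beta_1,\dots,\beta_m,\beta,\mu,K$. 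I expect the denominator estimate to be the main obstacle, together with the bookkeeping that makes the contour representation rigorous (absolute convergence of the double series on $\gamma_\delta$ and validity of the interchange) and the constant uniform in $z_2,\dots,z_m$; this is exactly where the three hypotheses $\beta_1>\max_{i\ge2}\beta_i$, the sector $\mu\le|\arg z_1|\le\pi$, and $-K\le z_i<0$ are all used.
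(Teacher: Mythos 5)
The paper does not actually prove Lemma \ref{lem:MLbound}: it is quoted from \cite[Section 2.1]{LiLiuYamamoto:2013}, so there is no internal proof to compare against. Your contour-integral strategy is the standard route to such bounds, and the heart of your argument is sound: the representation of $E_{(\beta_1,\dots,\beta_m),\beta}$ as a Hankel integral with denominator $\zeta^{\beta_1}-z_1-\sum_{i\ge2}z_i\zeta^{\beta_1-\beta_i}$, the convex-cone estimate $|S|\ge\operatorname{Re}S\ge\cos(\beta_1\theta_0)|\zeta|^{\beta_1}$ (which is where $\beta_1>\max_{i\ge2}\beta_i$ and $z_i<0$ enter), the angular-separation bound $|S-z_1|\ge c\bigl(|\zeta|^{\beta_1}+|z_1|\bigr)$ (which is where $\mu>\beta_1\pi/2$ enters), and the split of the contour into arc and rays all check out and deliver the claimed $C/(1+|z_1|)$.

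There is, however, one genuine gap, in the justification of the contour representation itself. You propose to legitimize the sum--integral interchange and the geometric series by taking the loop radius $\delta$ so large that $\sum_{i=1}^m|z_i|\,|\zeta|^{-\beta_i}<1$ on $\gamma_\delta$, with $\delta$ depending only on $\beta_1,\dots,\beta_m,m,K$. But the $i=1$ term $|z_1|\,|\zeta|^{-\beta_1}$ is part of that sum, and only $z_2,\dots,z_m$ are confined to $[-K,0)$; $|z_1|$ is unbounded. For large $|z_1|$ your condition forces $\delta$ to be at least of order $|z_1|^{1/\beta_1}$, so as written the representation is only established on a $z_1$-dependent contour, and the constants you extract at the end (which involve $e^\delta$ and $\int_\delta^\infty e^{r\cos\theta_0}r^{\beta_1-\beta}\,dr$) would then grow with $|z_1|$, destroying the uniform bound. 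The repair is routine but must be stated: first derive the formula with a large, $z_1$-dependent radius; then observe that your own denominator estimate $|S-z_1|\ge c\bigl(|\zeta|^{\beta_1}+|z_1|\bigr)>0$ holds for \emph{every} $\zeta\neq0$ in the sector $|\arg\zeta|\le\theta_0$, so the integrand is analytic in the region between the two Hankel loops, and Cauchy's theorem (together with the exponential decay $e^{r\cos\theta_0}$ on the rays) lets you collapse the contour to a fixed radius $\delta=1$. With that deformation step inserted, your proof is complete and uses each hypothesis exactly where you say it does.
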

\begin{lemma}\label{lem:multiMLprop}
Let $0<\beta<2$, $0<\beta_i<1$ and $z_i \in \mathbb C$, $i=1,...,m$.
Then we have
\begin{equation*}
    \frac{1}{\Gamma(\beta_0)}+ \sum_{i=1}^{m}z_iE_{(\beta_1,...,\beta_m),{\beta_0+\beta_i}}(z_1,...,z_m)
    =E_{(\beta_1,...,\beta_m),\beta_0}(z_1,...,z_m).
\end{equation*}
\end{lemma}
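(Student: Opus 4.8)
\emph{Sketch.} The plan is to prove the identity by directly manipulating the double series that defines the multinomial Mittag-Leffler function and matching it, term by term, against the left-hand side. Write $\vec l=(l_1,\dots,l_m)$, $|\vec l|=\sum_{i}l_i$ and $\langle\vec\beta,\vec l\rangle=\sum_i\beta_i l_i$, so that
\begin{equation*}
E_{(\beta_1,\dots,\beta_m),\beta_0}(z_1,\dots,z_m)=\frac{1}{\Gamma(\beta_0)}+\sum_{k=1}^\infty\sum_{|\vec l|=k}(k;l_1,\dots,l_m)\,\frac{\prod_{i=1}^m z_i^{l_i}}{\Gamma(\beta_0+\langle\vec\beta,\vec l\rangle)},
\end{equation*}
where the $k=0$ term (for which all $l_i=0$) has been split off; it accounts precisely for the isolated summand $1/\Gamma(\beta_0)$ in the claim. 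It therefore suffices to show that $\sum_{i=1}^m z_i\,E_{(\beta_1,\dots,\beta_m),\beta_0+\beta_i}$ equals the remaining double sum over $k\ge 1$.

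Fix $i$. Multiplying the defining series of $E_{(\beta_1,\dots,\beta_m),\beta_0+\beta_i}$ by $z_i$ raises the exponent of $z_i$ by one, so I would reindex by $l_i\mapsto l_i-1$ (forcing $l_i\ge 1$ afterwards) and $k\mapsto k-1$. Since $\beta_0+\beta_i+\langle\vec\beta,\vec l\rangle$ becomes exactly $\beta_0+\langle\vec\beta,\vec l\rangle$ in the new indices, the Gamma arguments line up with those of the target sum. The one point needing care is the multinomial coefficient, where one uses the elementary identity $(k-1;l_1,\dots,l_i-1,\dots,l_m)=\tfrac{l_i}{k}\,(k;l_1,\dots,l_i,\dots,l_m)$, immediate from $k!/l_i!=(k/l_i)\,(k-1)!/(l_i-1)!$. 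This yields
\begin{equation*}
z_i\,E_{(\beta_1,\dots,\beta_m),\beta_0+\beta_i}(z_1,\dots,z_m)=\sum_{k=1}^\infty\sum_{\substack{|\vec l|=k\\ l_i\ge 1}}\frac{l_i}{k}\,(k;l_1,\dots,l_m)\,\frac{\prod_{j=1}^m z_j^{l_j}}{\Gamma(\beta_0+\langle\vec\beta,\vec l\rangle)}.
\end{equation*}

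Finally I would sum over $i=1,\dots,m$: since terms with $l_i=0$ vanish, the restriction $l_i\ge 1$ may be dropped, and the coefficient multiplying a given term becomes $\sum_{i=1}^m l_i/k=|\vec l|/k=1$. Hence $\sum_{i=1}^m z_i\,E_{(\beta_1,\dots,\beta_m),\beta_0+\beta_i}$ equals $\sum_{k\ge 1}\sum_{|\vec l|=k}(k;l_1,\dots,l_m)\prod_j z_j^{l_j}/\Gamma(\beta_0+\langle\vec\beta,\vec l\rangle)$, which is exactly $E_{(\beta_1,\dots,\beta_m),\beta_0}-1/\Gamma(\beta_0)$, and the lemma follows. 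The reindexing and the interchange of the finite sum over $i$ with the infinite sums are legitimate by absolute convergence, since $1/\Gamma$ decays super-exponentially along the relevant arithmetic progressions and $E_{(\beta_1,\dots,\beta_m),\beta}$ is an entire function of $(z_1,\dots,z_m)$. The whole computation is essentially bookkeeping with multi-indices; the only place to stay alert is keeping the multinomial coefficient and the summation ranges mutually consistent under the shift $l_i\mapsto l_i-1$.
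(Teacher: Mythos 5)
Your proof is correct. Note, however, that the paper itself offers no proof of this lemma: it is simply quoted from \cite{LiLiuYamamoto:2013} (Section 2.1), so there is no in-paper argument to compare against. Your series manipulation is the natural self-contained derivation: splitting off the $k=0$ term to produce $1/\Gamma(\beta_0)$, shifting $l_i\mapsto l_i-1$ so that $\beta_0+\beta_i+\langle\vec\beta,\vec l\rangle$ realigns with $\beta_0+\langle\vec\beta,\vec l\rangle$, using $(k-1;l_1,\dots,l_i-1,\dots,l_m)=\tfrac{l_i}{k}(k;l_1,\dots,l_m)$, and then observing that $\sum_i l_i/k=1$ restores the full multinomial coefficient. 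The rearrangements are indeed justified by absolute convergence, since $\langle\vec\beta,\vec l\rangle\ge(\min_i\beta_i)\,k$ makes the reciprocal Gamma factors decay super-exponentially in $k$ while the multinomial coefficients grow only like $m^k$. This gives the paper a proof it currently delegates to a reference.
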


\subsection{Solution Representation}\label{ssec:represent}
For $s\ge-1$, we denote by $\dH s\subset H^{-1}(\Omega)$ the Hilbert space induced by the norm:
\begin{equation*}
  \|v\|_{\dH s}^2=\sum_{j=1}^{\infty}\la_j^s \langle v,\fy_j \rangle^2
\end{equation*}
with $\{\la_j\}_{j=1}^\infty$ and $\{\fy_j\}_{j=1}^\infty$ being respectively the eigenvalues and
the $L^2(\Omega)$-orthonormal eigenfunctions of the Laplace operator $-\Delta$ on the domain
$\Omega$ with a homogeneous Dirichlet boundary condition. Then $\{\fy_j\}_{j=1}^\infty$ and $\{\la_j^{1/2}
\fy_j\}_{j=1}^\infty$, form an orthonormal basis in $L^2(\Omega)$ and $H^{-1}(\Omega)$, respectively.
Further, $\|v\|_{\dH 0}=\|v\|_{L^2(\Omega)}=(v,v)^{1/2}$ is the norm in $L^2(\Omega)$ and $\|v\|_{\dH {-1}}
= \|v\|_{H^{-1}(\Omega)}$ is the norm in $H^{-1}(\Omega)$. It is easy to verify that
$\|v\|_{\dH 1}= \|\nabla v\|_{L^2(\Omega)}$ is also the norm in $H_0^1(\Omega)$
and  $\|v\|_{\dH 2}=\|\Delta v\|_{L^2(\Omega)}$ is equivalent to the norm in $H^2(\Omega)\cap H^1_0(\Omega)$
\cite[Lemma 3.1]{Thomee:2006}. Note that $\dH s$, $s\ge -1$ form a
Hilbert scale of interpolation spaces. Hence, we denote $\|\cdot\|_{H^s(\Omega)}$ to
be the norm on the interpolation scale between $H^1_0(\Omega)$ and $L^2(\Omega)$ for $s\in
[0,1]$ and $\|\cdot\|_{H^{s}(\Omega)}$ to be the norm on the interpolation scale between
$L^2(\Omega)$ and $H^{-1}(\Omega)$ for $s\in [-1,0]$.  Then, $\| \cdot \|_{H^s(\Omega)}$
and $\|\cdot\|_{\dH s}$ are equivalent for $s\in [-1,1]$. Further, for a Banach space $B$, we define the space
\begin{equation*}
  L^r(0,T;B) = \{u(t)\in B \mbox{ for a.e. } t\in (0,T) \mbox{ and } \|u\|_{L^r(0,T;B)}<\infty\},
\end{equation*}
for any $r\geq 1$, and the norm $\|\cdot\|_{L^r(0,T;B)}$ is defined by
\begin{equation*}
  \|u\|_{L^r(0,T;B)} = \left\{\begin{aligned}\left(\int_0^T\|u(t)\|_B^rdt\right)^{1/r}, &\quad r\in [1,\infty),\\
     \displaystyle  {\mathrm{esssup}_{t\in(0,T)}}\|u(t)\|_B, &\quad r= \infty.
       \end{aligned}\right.
\end{equation*}

Upon denoting $\vecal=(\al,\al-\al_1,...,\al-\al_m)$, we introduce the following
solution operator
\begin{equation}\label{eqn:opE}
    E(t)v=\sum_{j=1}^{\infty} \left(1-\la_j t^{\al}
    E_{\vecal,1+\al}(-\la_jt^{\al},-b_1t^{\al-\al_1},...,-b_mt^{\al-\al_m})\right) (v,\fy_j)\fy_j.
\end{equation}
This operator is motivated by a separation of variable \cite{LuchkoGorenflo:1999, Luchko:2011}.
Then for problem \eqref{eqn:goveq} with a homogeneous right hand side, i.e., $f\equiv0$, we have $u(x,t)=E(t)v$.
However, the representation \eqref{eqn:opE} is not always very convenient for analyzing its smoothing
property. We derive an alternative representation of the solution operator $E$ using Lemma \ref{lem:multiMLprop}:
\begin{equation}\label{eqn:opE2}
\begin{split}
    E(t)v 
    = &\sum_{j=1}^{\infty}
    E_{\vecal,1} (-\la_jt^{\al},-b_1t^{\al-\al_1},...,-b_mt^{\al-\al_m}) (v,\fy_j)\fy_j\\
      &\ \ + \sum_{i=1}^m b_it^{\al-\al_i}
       \sum_{j=1}^{\infty} E_{\vecal,1+\al-\al_i}
       (-\la_jt^{\al},-b_1t^{\al-\al_1},...,-b_mt^{\al-\al_m}) (v,\fy_j)\fy_j.
\end{split}
\end{equation}

Besides, we define the following operator $\bar{E}$ for $\chi\in L^2(\Omega)$ by
\begin{equation}\label{eqn:Ebar}
    \bar E(t)\chi=\sum_{j=1}^{\infty}  t^{\al-1}
    E_{\vecal,\al}(-\la_jt^{\al},-b_1t^{\al-\al_1},...,-b_mt^{\al-\al_m}) (\chi,\fy_j)\fy_j.
\end{equation}
The operators $E(t) $ and $ {\bar E}(t) $ can be used to represent the solution $u$ of \eqref{eqn:goveq} as:
\begin{equation}\label{eqn:solrep}
   u(t)=E(t)v + \int_0^t  {\bar E}(t-s) f(s) ds.
\end{equation}

The operator $\bar{E}$ has the following smoothing property.
\begin{lemma}\label{lem:barE}
For any $t>0$ and $\chi\in \dH q$, $q\in(-1,2]$, there holds for $0\le p-q \le 2$
\begin{equation*}
     \|\bar E(t) \chi \|_{\dH p} \le Ct^{-1+\al(1+(q-p)/2)}\|\chi\|_{\dH q}.
\end{equation*}
\end{lemma}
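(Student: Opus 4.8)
The plan is to argue spectrally. Expanding $\bar E(t)\chi$ in the eigenbasis $\{\fy_j\}$ via \eqref{eqn:Ebar} and using Parseval's identity gives
\[
  \|\bar E(t)\chi\|_{\dH p}^2 = \sum_{j=1}^\infty \la_j^p\, t^{2(\al-1)}\,
    \big(E_{\vecal,\al}(-\la_j t^\al,-b_1t^{\al-\al_1},\dots,-b_mt^{\al-\al_m})\big)^2\,(\chi,\fy_j)^2,
\]
so the whole estimate reduces to a uniform (in $j$) pointwise bound on the multinomial Mittag-Leffler factor, followed by an elementary scalar inequality.

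For the pointwise bound I would invoke Lemma \ref{lem:MLbound} with the $(m+1)$-component vector $\vecal=(\al,\al-\al_1,\dots,\al-\al_m)$, so that the leading index is $\beta_1=\al\in(0,1)$. Since $0<\al_m\le\cdots\le\al_1<\al$, we have $\al>\al-\al_m=\max\{\al-\al_1,\dots,\al-\al_m\}$, which is exactly the ordering hypothesis of Lemma \ref{lem:MLbound}. For $t\in(0,T]$ the auxiliary arguments $z_i=-b_it^{\al-\al_i}$ satisfy $-K\le z_i<0$ with $K=\max_i b_iT^{\al-\al_i}$, whereas $z_1=-\la_jt^\al$ is real and negative, so $|\mathrm{arg}(z_1)|=\pi$; choosing $\mu\in(\al\pi/2,\al\pi)$, which lies in $(0,\pi)$, places us in the admissible range $\mu\le|\mathrm{arg}(z_1)|\le\pi$. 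Lemma \ref{lem:MLbound} then yields a constant $C$, independent of $j$ and $t$, with
\[
  E_{\vecal,\al}(-\la_jt^\al,-b_1t^{\al-\al_1},\dots,-b_mt^{\al-\al_m}) \le \frac{C}{1+\la_jt^\al}.
\]
This is also where the restriction to a finite interval enters, via the dependence of $K$, hence $C$, on $T$.

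Finally I would combine the two ingredients. Inserting this bound and splitting $\la_j^p=\la_j^q\,\la_j^{p-q}=\la_j^q\,t^{-\al(p-q)}(\la_jt^\al)^{p-q}$, the $j$-th summand becomes $C^2\,t^{2\al-2-\al(p-q)}\,\la_j^q\,(\la_jt^\al)^{p-q}(1+\la_jt^\al)^{-2}\,(\chi,\fy_j)^2$. The key scalar inequality is that, for $0\le p-q\le2$ and all $x\ge0$,
\[
  \frac{x^{p-q}}{(1+x)^2}\le\frac{(1+x)^{p-q}}{(1+x)^2}\le 1,
\]
so that $(\la_jt^\al)^{p-q}(1+\la_jt^\al)^{-2}\le1$. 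Summing over $j$ then gives
\[
  \|\bar E(t)\chi\|_{\dH p}^2 \le C^2\,t^{2\al-2-\al(p-q)}\sum_{j=1}^\infty\la_j^q(\chi,\fy_j)^2
     = C^2\,t^{2\al-2-\al(p-q)}\,\|\chi\|_{\dH q}^2,
\]
and taking square roots produces the exponent $\al-1-\al(p-q)/2=-1+\al(1+(q-p)/2)$, as asserted; convergence of the series (hence $\bar E(t)\chi\in\dH p$) is a byproduct, since the right-hand side is finite for $\chi\in\dH q$.

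The main obstacle is genuinely just the bookkeeping around Lemma \ref{lem:MLbound} — in particular the uniform boundedness of $z_2,\dots,z_{m+1}$, which is what confines the estimate to $(0,T]$ — together with identifying $0\le p-q\le2$ as precisely the range in which $x^{p-q}/(1+x)^2$ stays bounded by $1$; everything else is routine, the only pitfall being a careless split of the power of $\la_j$. One should also note that $q\in(-1,2]$ and $0\le p-q\le2$ keep both $\dH q$ and $\dH p$ inside the scale $\dH s$, $s\ge-1$, on which the norms are defined.
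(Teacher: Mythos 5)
Your proposal is correct and is essentially the paper's own proof: a spectral (Parseval) expansion, the bound $|E_{\vecal,\al}|\le C/(1+\la_jt^\al)$ from Lemma \ref{lem:MLbound}, and the elementary estimate $\sup_{x\ge0}x^{p-q}/(1+x)^2\le C$ for $0\le p-q\le2$, yielding the same exponent $-2+(2+q-p)\al$ before taking square roots. The only difference is that you spell out the verification of the hypotheses of Lemma \ref{lem:MLbound} (ordering of the indices, boundedness of the auxiliary arguments on $(0,T]$, $|\mathrm{arg}(z_1)|=\pi$), which the paper leaves implicit.
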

\begin{proof}
The definition of the operator $\bar{E}$ in \eqref{eqn:Ebar} and Lemma \ref{lem:MLbound} yield
\begin{equation*}
\begin{split}
\|\bar E(t) \chi \|_{\dH p}^2
      &=t^{-2+(2+q-p)\al}\sum_{j=1}^{\infty} (\la_j t^{\al})^{p-q}
      | E_{\vecal,\al}(-\la_j t^\al,-b_1t^{\al-\al_1},...,-b_mt^{\al-\al_m})|^2 \la_j^q |(\chi,\fy_j)|^2\\
      &\le C t^{-2+(2+q-p)\al}\sum_{j=1}^{\infty} \frac{(\la_j t^{\al})^{p-q}}{(1+\la_jt^{\al})^2} \la_j^q |(\chi,\fy_j)|^2\\
      &\le C t^{-2+(2+q-p)\al}\sum_{j=1}^{\infty} \la_j^q |(\chi,\fy_j)|^2 \le C t^{-2+(2+q-p)\al}\|\chi\|_{\dH q},
\end{split}
\end{equation*}
where the last line follows by the inequality $\sup_{j\in \mathbb{N}}
\frac{ (\la_j t^\al)^{p-q}}{(1+\la_j t^\al)^2}\le C$, for $0\leq p-q\leq 2$.
\end{proof}

\subsection{Solution regularity}
First we recall known regularity results. In \cite{LiYamamoto:2013}, Li and Yamamoto showed that in the
case of variable coefficients $\{b_i(x)\}$, there exists a unique mild solution $u\in C((0,T];
\dH{\gamma})\cap C([0,T];L^2(\Omega))$ and $u\in C([0,T];\dH {\gamma}) \cap L^{\infty}(0,T;\dH 2)$ when
$ v \in L^2(\Omega)$, $f=0$ and $v =0$, $f\in L^{\infty}(0,T]; L^2(\Omega))$, respectively, with
$\gamma\in[0,1)$. These results were recently refined in \cite{LiLiuYamamoto:2013} for the case of constant
coefficients, i.e., problem \eqref{eqn:goveq}. In particular, it was shown that for $v\in \dH q$, $0\leq q
\leq 1$, and $f=0$, $u\in L^{1/(1-q/2)}(0,T;H^2(\Omega)\cap H_0^1(\Omega))$; and for $v=0$ and $f\in L^r(0,T;
\dH q)$, $0\leq q\leq 2$, $r\geq1$, $u\in L^r(0,T;\dH {q+2-\gamma})$ for some $\gamma\in(0,1]$. Here
we follow the approach in \cite{LiLiuYamamoto:2013}, and extend these results to a slightly more general
setting of $v\in \dH q$, $-1<q\leq 2$, and $f\in L^2(0,T;\dH q)$, $-1<q\le 1$. The nonsmooth case, i.e.,
$-1<q\leq 0$, arises commonly in related inverse problems and optimal control problems.

We shall derive the solution regularity to the homogeneous problem, i.e., $f\equiv 0$, and
the inhomogeneous problem, i.e., $v\equiv 0$, separately. These results will be essential for the
error analysis of the space semidiscrete Galerkin scheme in Section \ref{sec:galerkin}.
First we consider the homogeneous problem with initial data $v\in \dH q$, $-1<q\leq 2$.
\begin{theorem}\label{thm:homogreg}
Let $u(t)=E(t)v$ be the solution to problem \eqref{eqn:goveq} with $f\equiv 0$ and $v\in \dH q$,
$q\in(-1,2]$. Then there holds
\begin{equation*}
 \|\PD^\ell u(t) \|_{\dH p}
 \le Ct^{-\al(\ell+(p-q)/2)}\|v\|_{\dH q},\quad t>0,
\end{equation*}
where for $\ell=0$, $ 0 \le p-q\le 2$ and for $\ell=1$, $-2 \le p-q \le 0$.
\end{theorem}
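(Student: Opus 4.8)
The plan is to work term-by-term in the spectral expansion, just as in the proof of Lemma~\ref{lem:barE}. Starting from the representation \eqref{eqn:opE} for $u(t)=E(t)v$, I would write
\[
 \PD^\ell E(t)v = \sum_{j=1}^\infty m_j^{(\ell)}(t)\,(v,\fy_j)\fy_j,
\]
and bound the multipliers $m_j^{(\ell)}(t)$. For $\ell=0$, the multiplier is $1-\la_j t^\al E_{\vecal,1+\al}(-\la_jt^\al,-b_1t^{\al-\al_1},\dots)$, and by Lemma~\ref{lem:multiMLprop} (with $\beta_0=1$) this equals $E_{\vecal,1}(-\la_jt^\al,\dots)-\sum_i b_it^{\al-\al_i}E_{\vecal,1+\al-\al_i}(-\la_jt^\al,\dots)$, which is essentially the alternative representation \eqref{eqn:opE2}. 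Then Lemma~\ref{lem:MLbound} gives $|E_{\vecal,\beta}(-\la_jt^\al,\dots)|\le C/(1+\la_jt^\al)$ for each relevant second index $\beta\in\{1,1+\al-\al_i\}$; note the hypotheses of Lemma~\ref{lem:MLbound} are met because $\al$ is the largest order, so the first component $-\la_jt^\al$ dominates and the others $-b_it^{\al-\al_i}$ are bounded below (for $t$ in a bounded interval) as required, with $\mathrm{arg}(-\la_jt^\al)=\pi$. This yields $|m_j^{(0)}(t)|\le C/(1+\la_jt^\al)$. For $\ell=1$, applying $\PD$ to $E(t)v$ term-by-term and using that $\PD\big[t^{\al-\al_i}E_{\vecal,1+\al-\al_i}(\dots)\big]$ again produces multinomial Mittag-Leffler functions of the form $t^{-\al_i}E_{\vecal,1-\al_i}(\dots)$ up to lower-order terms (this is the standard fractional-derivative identity for the multinomial Mittag-Leffler function, obtainable from Lemma~\ref{lem:multiMLprop}), one obtains $|m_j^{(1)}(t)|\le C\,\la_j t^{-\al}/(1+\la_jt^\al)$.

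With these pointwise bounds in hand, I would estimate the $\dH p$ norm exactly as in Lemma~\ref{lem:barE}: writing $\la_j^p=(\la_jt^\al)^{p-q}t^{-\al(p-q)}\la_j^q$,
\[
 \|\PD^\ell E(t)v\|_{\dH p}^2 = \sum_{j=1}^\infty \la_j^p |m_j^{(\ell)}(t)|^2 |(v,\fy_j)|^2
 \le C\, t^{-\al(p-q)}\sup_{j}\Big[(\la_jt^\al)^{p-q}|m_j^{(\ell)}(t)|^2 (1+\la_jt^\al)^{2\cdot\mathbf{1}_{\ell=0}}\cdots\Big]\sum_j \la_j^q|(v,\fy_j)|^2.
\]
More precisely, for $\ell=0$ one needs $\sup_j (\la_jt^\al)^{p-q}/(1+\la_jt^\al)^2\le C$, which holds iff $0\le p-q\le 2$, giving the exponent $-\al(p-q)/2$. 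For $\ell=1$ one has an extra factor $(\la_jt^{-\al})^2$ inside, i.e. $t^{-2\al}\sup_j(\la_jt^\al)^{p-q+2}/(1+\la_jt^\al)^2$, which is bounded iff $-2\le p-q\le 0$, producing the exponent $-\al(1+(p-q)/2)$. In both cases this matches the claimed $t^{-\al(\ell+(p-q)/2)}$, so one takes square roots and concludes.

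I would expect the only genuine subtlety to be a careful verification that Lemma~\ref{lem:MLbound} applies uniformly in $j$ and for all the required values of the second index $\beta$: the lemma requires $\mu\le|\mathrm{arg}(z_1)|\le\pi$ with $\mu>\frac{\beta_1\pi}{2}=\frac{\al\pi}{2}$, and here $z_1=-\la_jt^\al$ has argument exactly $\pi$, so this is fine, but one must also check that the constant $C$ can be chosen independent of $j$ and of $t$ in a bounded range (which it can, since the lower bound $-K\le z_i<0$ on the remaining arguments $-b_it^{\al-\al_i}$ holds with $K$ depending only on the $b_i$, the $\al_i$, and $T$). The identity expressing $\PD^\ell$ applied to the Mittag-Leffler terms needs to be stated cleanly — it follows by differentiating the series and reindexing, or equivalently by repeated use of Lemma~\ref{lem:multiMLprop} — but it is routine and not where the difficulty lies. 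Everything else is the same $\sup_j$-over-eigenvalues calculation already carried out in Lemma~\ref{lem:barE}.
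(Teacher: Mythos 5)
Your treatment of $\ell=0$ is exactly the paper's: rewrite the multiplier via Lemma \ref{lem:multiMLprop} to obtain \eqref{eqn:opE2} (note the sign there: the cross terms enter with a plus, not a minus, though this is immaterial for the bound), apply Lemma \ref{lem:MLbound}, and run the $\sup_j(\la_jt^\al)^{p-q}/(1+\la_jt^\al)^2$ argument. The problem is the $\ell=1$ case. First, your stated multiplier bound $|m_j^{(1)}(t)|\le C\la_jt^{-\al}/(1+\la_jt^\al)$ carries a spurious factor $t^{-\al}$: taken literally it yields $\|\PD u(t)\|_{\dH p}\le Ct^{-\al(2+(p-q)/2)}\|v\|_{\dH q}$ rather than the claimed rate (test it at $p=q=0$: you would get $t^{-2\al}$ where the theorem asserts $t^{-\al}$). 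The correct bound is $|m_j^{(1)}(t)|\le C\la_j/(1+\la_jt^\al)$, and your subsequent display quietly uses this one --- the extra factor you carry into the supremum is $\la_j^2=t^{-2\al}(\la_jt^\al)^2$, not $(\la_jt^{-\al})^2$ --- so the two halves of your $\ell=1$ argument are inconsistent with each other. Second, the route you propose for reaching that bound (termwise application of $\PD$ to the multinomial Mittag--Leffler terms via a ``standard fractional-derivative identity'') is never actually supplied, and Lemma \ref{lem:multiMLprop} alone does not give it: that lemma is an algebraic recursion in the second index, not a differentiation formula, and applying the Caputo operators $\partial_t^{\al_i}$ to the products $t^{\al-\al_i}E_{\vecal,1+\al-\al_i}(\cdots)$ is precisely the computation one should avoid.

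The paper sidesteps all of this in one line: since $u=E(t)v$ solves the homogeneous equation, $\PD E(t)v=\Delta E(t)v$, hence $\|\PD E(t)v\|_{\dH p}=\|E(t)v\|_{\dH {p+2}}$, and the $\ell=1$ estimate is just the $\ell=0$ estimate with $p$ replaced by $p+2$ --- which also explains the shifted range $-2\le p-q\le 0$. Adopting this reduction makes the correct multiplier bound automatic. Finally, the paper's proof also verifies that the representation attains the initial condition, $\lim_{t\to0^+}\|E(t)v-v\|_{\dH q}=0$, via Lemma \ref{lem:multiMLprop} and dominated convergence; since the theorem asserts that $E(t)v$ \emph{is} the solution, your proposal should not omit this step.
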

\begin{proof}
We show that \eqref{eqn:opE2} represents indeed the weak solution to problem \eqref{eqn:goveq}
with $f\equiv 0$ and further it satisfies the desired estimate. We first discuss the case
$\ell=0$. By Lemma \ref{lem:MLbound} and \eqref{eqn:opE2} we have for $0\le p-q \le 2$
\begin{equation*}
\begin{split}
    \| E(t)v \|_{\dH p}^2 & = \displaystyle
       \sum_{j=1}^{\infty}\lambda_j^p \Big (E_{\vecal,1} (-\la_jt^{\al},-b_1t^{\al-\al_1},...,-b_mt^{\al-\al_m}) \Big .\\
     & \Big . \quad\quad +\sum_{i=1}^mb_it^{\al-\al_i}
       E_{\vecal,1+\al-\al_i}
       (-\la_jt^{\al},-b_1t^{\al-\al_1},...,-b_mt^{\al-\al_m}) \Big )^2 (v,\fy_j)^2\\
       &\le C t^{-(p-q)\al}\sum_{j=1}^{\infty}\frac{(\la_jt^{\al})^{p-q}}{(1+\la_jt^\al)^2}
         \la_j^q |(v,\fy_j)|^2\le C t^{-(p-q)\al}  \| v \|_{\dH q}^2,
\end{split}
\end{equation*}
where the last line follows from the inequality $\sup_{j\in\mathbb N}\frac{ (\la_jt^\al)^{p-q}
}{(1+\la_jt^\al)^2}\le C$ for $0\le p-q\le 2$. The estimate for the case $\ell=1$ follows from
the identity $\|\PD E(t)v \|_{\dH p} = \| E(t)v \|_{\dH {p+2}}$. It remains to show that
\eqref{eqn:opE2} satisfies also the initial condition in the sense that $\lim_{t\rightarrow 0^+}
\| E(t)v-v  \|_{\dH q}=0$. By identity \eqref{eqn:opE} and Lemma \ref{lem:MLbound}, we deduce
\begin{equation*}
\begin{split}
    \| E(t)v-v  \|_{\dH q}^2
       &=\sum_{j=1}^{\infty} \la_j^{2} t^{2\al}
       \bigg|E_{\vecal,1+\al}(-\la_jt^{\al},-b_1t^{\al-\al_1},...,-b_mt^{\al-\al_m})\bigg|^2\la_j^q| (v,\fy_j)|^2\\
       & \leq C\|v\|_{\dH q}^2 <\infty.
\end{split}
\end{equation*}
Using Lemma \ref{lem:multiMLprop}, we rewrite the term $\la_j t^{\al}E_{\vecal,1+\al}
(-\la_jt^{\al},-b_1t^{\al-\al_1},...,-b_mt^{\al-\al_m})$ as
\begin{equation*}
  \begin{aligned}
      \la_j t^{\al} E_{\vecal,1+\al}&(-\la_jt^{\al},-b_1t^{\al-\al_1},...,-b_mt^{\al-\al_m})\\
    = & (1-E_{\vecal,1} (-\la_jt^{\al},-b_1t^{\al-\al_1},...,-b_mt^{\al-\al_m})) \\
    & \ \ -  \sum_{i=1}^m b_it^{\al-\al_i}E_{\vecal,1+\al-\al_i}
       (-\la_jt^{\al},-b_1t^{\al-\al_1},...,-b_mt^{\al-\al_m}).
  \end{aligned}
\end{equation*}
Upon noting the identity $\lim_{t\to 0^+}(1-E_{\vecal,1} (-\la_jt^{\al},-b_1t^{\al-\al_1},...,
-b_mt^{\al-\al_m}))=0$, and the boundedness of $E_{\vecal,1+\al-\al_i}(-\la_jt^{\al},
-b_1t^{\al-\al_1},...,-b_mt^{\al-\al_m})$ from Lemma \ref{lem:MLbound}, we deduce that for all $j$
\begin{equation*}
  \lim_{t\to0^+}\la_j t^{\al} E_{\vecal,1+\al}(-\la_jt^{\al},-b_1t^{\al-\al_1},...,-b_mt^{\al-\al_m})=0.
\end{equation*}
Hence, the desired assertion follows by Lebesgue's dominated convergence theorem.
\end{proof}

Now we turn to the inhomogeneous problem with a nonsmooth right hand side, i.e.,
$f\in L^{\infty}(0,T;\dH q)$, $-1<q\leq 1$, and a zero initial data $v\equiv0$.
\begin{theorem}\label{thm:inhomogreg}
For $f\in L^{\infty}(0,T;\dH q)$, $-1<q\le1$, and $v\equiv 0$, the solution
$u$ \eqref{eqn:solrep} belongs to $L^{\infty}(0,T;\dH {q+2-\ep})$ for any $\epsilon>0$ and satisfies
\begin{equation}\label{eq:regeps}
    \|u(\cdot,t)\|_{\dH {q+2-\epsilon}} \leq C\epsilon^{-1}t^{\ep\al/2}\|f\|_{L^\infty(0,t;\dH q)}.
\end{equation}
Hence, it is a solution to problem \eqref{eqn:goveq}
with a homogeneous initial data $v=0$.
\end{theorem}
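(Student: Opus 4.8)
The plan is to bound the Duhamel term $u(t)=\int_0^t\bar E(t-s)f(s)\,ds$ from the representation \eqref{eqn:solrep} (with $v\equiv0$) directly, by applying the smoothing estimate of Lemma \ref{lem:barE} pointwise in $s$ and integrating. It suffices to prove \eqref{eq:regeps} for $\epsilon\in(0,2)$: for $\epsilon\ge2$ the space $\dH{q+2-\epsilon}$ contains $\dH{q}$, hence also $\dH{q+2-\epsilon'}$ for small $\epsilon'>0$, so the membership $u\in L^\infty(0,T;\dH{q+2-\epsilon})$ already follows from the small-$\epsilon$ case. So fix $\epsilon\in(0,2)$ and set $p=q+2-\epsilon$, for which $p-q=2-\epsilon\in(0,2)$ and $q\in(-1,1]$; thus Lemma \ref{lem:barE} applies with target index $p$ and data index $q$, and the constant there is independent of $\epsilon$.

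First I would apply Lemma \ref{lem:barE} to $\bar E(t-s)f(s)$ for $s\in(0,t)$: with the above $p$ the exponent $-1+\al\bigl(1+(q-p)/2\bigr)$ equals $-1+\al\epsilon/2$, so
\begin{equation*}
  \|\bar E(t-s)f(s)\|_{\dH{q+2-\epsilon}}\le C(t-s)^{-1+\al\epsilon/2}\|f(s)\|_{\dH q}\le C(t-s)^{-1+\al\epsilon/2}\|f\|_{L^\infty(0,t;\dH q)}.
\end{equation*}
Then I would integrate over $(0,t)$ and use the elementary identity $\int_0^t(t-s)^{-1+\al\epsilon/2}\,ds=\tfrac{2}{\al\epsilon}\,t^{\al\epsilon/2}$, which is finite since $\al\epsilon/2>0$; absorbing $2/\al$ into $C$ gives
\begin{equation*}
  \|u(t)\|_{\dH{q+2-\epsilon}}\le\int_0^t\|\bar E(t-s)f(s)\|_{\dH{q+2-\epsilon}}\,ds\le C\epsilon^{-1}t^{\al\epsilon/2}\|f\|_{L^\infty(0,t;\dH q)},
\end{equation*}
which is \eqref{eq:regeps}; in particular $u\in L^\infty(0,T;\dH{q+2-\epsilon})$ for every $\epsilon>0$.

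It remains to argue that $u$ is a solution of \eqref{eqn:goveq} with $v=0$. The initial condition is immediate from \eqref{eq:regeps}: $\|u(t)\|_{\dH{q+2-\epsilon}}\to0$ as $t\to0^+$, and since $q+2-\epsilon>q$ for $\epsilon<2$ this forces $\|u(t)\|_{\dH q}\to0$, i.e.\ $u(0)=0$. That $u$ satisfies $\PD u-\Delta u=f$ follows from the construction of the representation \eqref{eqn:solrep} in Subsection \ref{ssec:represent}; equivalently, one inserts the eigenfunction expansion of $f$ into $\int_0^t\bar E(t-s)f(s)\,ds$, interchanges the summation with the $s$-integration and with the operator $\PD$, and applies Lemma \ref{lem:multiMLprop} term by term, the interchange being justified by the uniform bounds of Lemma \ref{lem:MLbound} and Fubini's theorem. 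I expect this last verification — legitimizing the formal term-by-term action of $\PD$ and the interchange of $\PD$ with the convolution — to be the only genuinely delicate point; the regularity estimate \eqref{eq:regeps} itself is a routine consequence of Lemma \ref{lem:barE} once the exponent bookkeeping is done, and the main thing to watch is keeping $p-q$ inside the admissible range $[0,2]$ of that lemma, which is why the range of $\epsilon$ is split as above.
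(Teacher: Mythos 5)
Your proposal is correct and follows essentially the same route as the paper: apply the smoothing estimate of Lemma \ref{lem:barE} with $p=q+2-\epsilon$ under the time integral, integrate $(t-s)^{\epsilon\al/2-1}$ to produce the factor $\epsilon^{-1}t^{\epsilon\al/2}$, and read off the initial condition from the resulting decay as $t\to0^+$. The extra care you take with the admissible range of $\epsilon$ and with justifying that the representation satisfies the governing equation only fills in details the paper leaves implicit ("by construction").
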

\begin{proof}
By construction, it satisfies the governing equation.
By Lemma \ref{lem:barE}, we have
\begin{equation*}
  \begin{aligned}
    \|u(\cdot,t)\|_{\dH {q+2-\ep}} & = \|\int_0^t \bar{E}(t-s)f(s)ds\|_{\dH {q+2-\ep}}\\
      & \leq \int_0^t \|\bar{E}(t-s)f(s)\|_{\dH {q+2-\ep}} ds \\
      & \leq C\int_0^t (t-s)^{\ep\al/2-1} \|f(s)\|_{\dH q}ds\\
      &\leq C\epsilon^{-1}t^{\ep\al/2}\|f\|_{L^\infty(0,t;\dH q)}
  \end{aligned}
\end{equation*}
which shows the desired estimate. Further, it satisfies
the initial condition $u(0)=0$, i.e., for any $\ep>0$, $\lim_{t\to 0^+}\|u(\cdot,t)
\|_{\dH {q+2-\ep}}=0$, and thus it is indeed a solution of \eqref{eqn:goveq}.
\end{proof}

Next we extend Theorem \ref{thm:inhomogreg} to allow less regular right hand sides $f\in L^2(0,T;\dH q)$,
$-1<q\le 1$. Then the function  $u(x,t)$ satisfies also the
differential equation as an element in the space $L^2(0,T;\dot H^{q+2}(\Omega))$. However, it may not
satisfy the homogeneous initial condition $u(x,0)=0$. In Remark \ref{weakest_sol} below, we argue that the
weakest class of source term that produces a legitimate weak solution of \eqref{eqn:goveq} is $f \in
L^r(0,T;\dH q)$ with $r>1/\al$ and $-1 < q \le 1$. Obviously, for $1/2<\al <1$, it does
give a solution $u(x,t) \in L^2(0,T;\dH {q+2})$.  To this end, we introduce the shorthand notation
\begin{equation*}
  \bar E_{\vecal}^j(t)  = t^{\al-1} E_{\vecal,\al}(-\la_jt^{\al},-b_1t^{\al-\al_1},...,-b_mt^{\al-\al_m}).
\end{equation*}

The function $\bar E_{\vecal}^j(t)$ is completely monotone; see Appendix \ref{app:MMLcm}
for the technical proof.
\begin{lemma}\label{lem:MMLcm}
The function $ \bar E_{\vecal}^j(t)$ for $j\in \mathbb{N}$ has the following properties:
\begin{eqnarray*}
	 \bar E_{\vecal}^j(t)\ \mbox { is completely monotone} \quad \mbox{and}\quad
	\int_0^T | \bar E_{\vecal}^j(t)|\,dt<\frac{1}{\la_j}. 
\end{eqnarray*}
\end{lemma}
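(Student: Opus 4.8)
The plan is to route everything through the Laplace transform of $\bar E_{\vecal}^j$. Write $P(s)=s^{\al}+\sum_{i=1}^m b_i s^{\al_i}$ for the symbol of the operator $\PD$, so that $P(0^+)=0$ because all of its exponents are positive. Transforming the multinomial series in \eqref{eqn:Ebar} term by term via $\mathcal L[t^{a-1}](s)=\Gamma(a)s^{-a}$, the Gamma factors cancel and the remaining series sums geometrically, so that
\begin{equation*}
  \mathcal L[\bar E_{\vecal}^j](s)=\frac{s^{-\al}}{1+\la_j s^{-\al}+\sum_{i=1}^m b_i s^{-(\al-\al_i)}}=\frac{1}{P(s)+\la_j},
\end{equation*}
first for $\operatorname{Re}(s)$ large and then, by analytic continuation, on $\mathbb C\setminus(-\infty,0]$. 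I expect this first step to be the main obstacle: one must justify convergence of the multinomial series on a half-plane, the legitimacy of the term-by-term transform, and the (at most algebraic, hence subexponential) growth of $\bar E_{\vecal}^j$ that makes the transform well defined on $(0,\infty)$. Everything afterwards is formal.

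Granting the transform identity, I would deduce complete monotonicity from the fact that $s\mapsto P(s)+\la_j$ is a nonconstant complete Bernstein function. Indeed, for $0<\gamma<1$ and $s=re^{i\theta}$ with $0<\theta<\pi$ one has $\operatorname{Im}(s^{\gamma})=r^{\gamma}\sin(\gamma\theta)>0$ since $0<\gamma\theta<\pi$; hence $\operatorname{Im}P(s)>0$ on the open upper half-plane, and adding the constant $\la_j\ge0$ does not change this. It follows that $P(s)+\la_j$ has no zero on $\mathbb C\setminus(-\infty,0]$ — off the real axis its imaginary part is nonzero, and on $(0,\infty)$ it equals $r^{\al}+\sum_i b_i r^{\al_i}+\la_j\ge\la_j>0$ — so $F(s):=(P(s)+\la_j)^{-1}$ is analytic there, with $|F(s)|\le C|s|^{-\al}$ at infinity and $F(s)\to\la_j^{-1}$ near the origin. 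Deforming the inversion contour for $\bar E_{\vecal}^j=\mathcal L^{-1}[F]$ onto the branch cut then yields the spectral representation
\begin{equation*}
  \bar E_{\vecal}^j(t)=\frac1\pi\int_0^\infty e^{-rt}\,\frac{r^{\al}\sin(\al\pi)+\sum_{i=1}^m b_i r^{\al_i}\sin(\al_i\pi)}{|P(re^{-i\pi})+\la_j|^2}\,dr,\qquad t>0 .
\end{equation*}
The density being nonnegative, $\bar E_{\vecal}^j$ is the Laplace transform of a positive measure, i.e.\ completely monotone; in particular $\bar E_{\vecal}^j\ge0$. Since $\bar E_{\vecal}^j(t)\sim t^{\al-1}/\Gamma(\al)$ as $t\to0^+$ it is not identically zero, and a completely monotone function that is not identically zero is strictly positive on all of $(0,\infty)$.

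Finally, complete monotonicity removes the absolute value in the $L^1$ bound, and by the monotone convergence theorem together with the transform identity,
\begin{equation*}
  \int_0^T|\bar E_{\vecal}^j(t)|\,dt=\int_0^T\bar E_{\vecal}^j(t)\,dt<\int_0^\infty\bar E_{\vecal}^j(t)\,dt=\lim_{s\to0^+}\mathcal L[\bar E_{\vecal}^j](s)=\lim_{s\to0^+}\frac{1}{P(s)+\la_j}=\frac{1}{\la_j},
\end{equation*}
where the strict inequality is exactly the positivity of the tail $\int_T^\infty\bar E_{\vecal}^j(t)\,dt$, which holds because $\bar E_{\vecal}^j>0$ on $(0,\infty)$. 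In short, once the Laplace transform of the multinomial Mittag–Leffler expression is pinned down rigorously, complete monotonicity is a soft consequence of $P+\la_j$ being a complete Bernstein function (equivalently, of the sign of the jump of $F$ across the cut), and the $L^1$-in-$t$ bound drops out by evaluating that transform at $s=0^+$.
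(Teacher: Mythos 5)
Your proposal is correct and follows essentially the same route as the paper: identify $\mathcal L[\bar E_{\vecal}^j](s)=(s^{\al}+\sum_k b_k s^{\al_k}+\la_j)^{-1}$, show the denominator has no zeros off the cut $(-\infty,0]$ via the sign of its imaginary part, deform the Bromwich contour onto the Hankel path to obtain a nonnegative spectral density, and conclude complete monotonicity. The only cosmetic difference is the last step: you evaluate $\int_0^\infty\bar E_{\vecal}^j\,dt=\lim_{s\to0^+}(P(s)+\la_j)^{-1}=\la_j^{-1}$ and subtract the strictly positive tail, whereas the paper introduces the auxiliary completely monotone function $v_j$ with $v_j(0^+)=1$ and $\bar E_{\vecal}^j=-v_j'/\la_j$ to write $\int_0^T\bar E_{\vecal}^j\,dt=(1-v_j(T))/\la_j<\la_j^{-1}$ — the two bookkeepings are equivalent.
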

\begin{theorem}\label{thm:regeps2}
For $f\in L^2(0,T;\dH q)$, $-1<q\le1$, the representation \eqref{eqn:solrep} belongs
to $L^{2}(0,T;\dH {q+2})$ and satisfies  the {\it a priori} estimate
\begin{equation}\label{eq:regeps2}
    \|u\|_{L^2(0,t;\dH {q+2})} + \| \PD u \|_{L^2(0,t;\dH {q})} \leq C\|f\|_{L^2(0,t;\dH q)}.
\end{equation}
\end{theorem}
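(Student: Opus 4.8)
The plan is to work eigenmode by eigenmode, exploiting the solution representation \eqref{eqn:solrep} together with the complete monotonicity of $\bar E_{\vecal}^j$ from Lemma \ref{lem:MMLcm}. Writing $f(s)=\sum_j f_j(s)\fy_j$ with $f_j(s)=(f(s),\fy_j)$, the $j$-th Fourier coefficient of $u$ is the convolution $u_j(t)=\int_0^t \bar E_{\vecal}^j(t-s)f_j(s)\,ds = (\bar E_{\vecal}^j * f_j)(t)$. Because $\bar E_{\vecal}^j$ is completely monotone it is in particular nonnegative, so $\|\bar E_{\vecal}^j\|_{L^1(0,t)} = \int_0^t \bar E_{\vecal}^j(s)\,ds < 1/\la_j$ by the second assertion of Lemma \ref{lem:MMLcm}. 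Young's convolution inequality on $(0,t)$ then gives $\|u_j\|_{L^2(0,t)} \le \|\bar E_{\vecal}^j\|_{L^1(0,t)}\|f_j\|_{L^2(0,t)} \le \la_j^{-1}\|f_j\|_{L^2(0,t)}$. Multiplying by $\la_j^{q+2}$ and summing over $j$ yields
\[
  \|u\|_{L^2(0,t;\dH{q+2})}^2 = \sum_j \la_j^{q+2}\|u_j\|_{L^2(0,t)}^2 \le \sum_j \la_j^{q}\|f_j\|_{L^2(0,t)}^2 = \|f\|_{L^2(0,t;\dH q)}^2,
\]
which is the first half of \eqref{eq:regeps2}.

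For the second term, $\|\PD u\|_{L^2(0,t;\dH q)}$, I would use the governing equation itself: since $u$ solves $\PD u = \Delta u + f$ in the appropriate weak sense, $\PD u = \Delta u + f$ as an identity in $L^2(0,t;\dH q)$ once we know $\Delta u \in L^2(0,t;\dH q)$, i.e. $u\in L^2(0,t;\dH{q+2})$ — which is exactly what we just proved. Hence
\[
  \|\PD u\|_{L^2(0,t;\dH q)} \le \|\Delta u\|_{L^2(0,t;\dH q)} + \|f\|_{L^2(0,t;\dH q)} = \|u\|_{L^2(0,t;\dH{q+2})} + \|f\|_{L^2(0,t;\dH q)} \le C\|f\|_{L^2(0,t;\dH q)},
\]
using the bound just established. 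This closes the estimate \eqref{eq:regeps2}. One should also record that $u\in L^2(0,T;\dH{q+2})$ with the stated dependence, which is immediate from the same computation with $t=T$.

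The main obstacle, and the point requiring care rather than mere bookkeeping, is justifying that the formal termwise manipulations are legitimate: that the series $\sum_j u_j(t)\fy_j$ genuinely defines an element of $L^2(0,T;\dH{q+2})$ solving the equation, that $\PD$ may be applied termwise, and that $\PD u_j = -\la_j u_j + f_j$ holds for each mode (this is where the scalar Volterra/fractional ODE theory and the identity $\PD(\bar E_{\vecal}^j * f_j) + \la_j(\bar E_{\vecal}^j * f_j) = f_j$ enter — essentially a Laplace-transform or convolution argument on the kernel $\bar E_{\vecal}^j$). The complete monotonicity in Lemma \ref{lem:MMLcm} is precisely what makes the $L^1$ kernel bound clean and sign-definite, so the crux is to invoke it correctly and to handle the limiting passage in the truncated partial sums. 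Since these are exactly the refinements the paper advertises as being supplied relative to \cite{LiLiuYamamoto:2013}, I would present the termwise argument carefully and relegate the scalar convolution identity to a short lemma or to the appendix alongside the proof of Lemma \ref{lem:MMLcm}.
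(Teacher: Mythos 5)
Your proposal is correct and follows essentially the same route as the paper: the first bound is obtained exactly as in the paper's proof, via Young's convolution inequality $\|k\ast g\|_{L^2}\le\|k\|_{L^1}\|g\|_{L^2}$ applied mode by mode together with the $L^1$ bound $\int_0^T|\bar E_{\vecal}^j|\,dt<1/\la_j$ from Lemma \ref{lem:MMLcm}, followed by summation against $\la_j^{q+2}$. The paper disposes of the $\PD u$ term with the remark that it ``follows analogously,'' i.e.\ from the modewise identity $\PD u_j=f_j-\la_j u_j$ and the same convolution bound, which is just the eigenmode version of your triangle-inequality argument using the governing equation.
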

\begin{proof}
By Young's inequality for the convolution $\|k\ast f\|_{L^p}\le\|k\|_{L_1}\|f\|_{L^p}$,
$k\in L^1$, $f\in L^p$, $p\ge 1$, and Lemma \ref{lem:MMLcm}, we deduce
\begin{equation*}
   \begin{aligned}
      \left\|\int_0^t \bar E_{\vecal}^n (t-\tau)f_n(\tau)\,d\tau\right\|_{L^2(0,T)}^2&\le
      \left(\int_0^T|\bar E_{\vecal}^n (t)|\,dt\right)^2  \left(\int_0^T |f_n(t)|^2\,dt\right)
      \le \frac{1}{\la_n^2} \int_0^T |f_n(t)|^2\,dt.
   \end{aligned}
\end{equation*}
Hence,
\begin{equation*}
  \begin{aligned}
    \| u\|_{L^2(0,T;\dot H^{q+2}(\Omega))}^2&\le \sum_{n=1}^{\infty}\la_n^{q+2}\left\|\int_0^t
    \bar E_{\vecal}^n (t-\tau)f_n(\tau)\,d\tau\right\|_{L^2(0,T)}^2\\
    &\le \sum_{n=1}^{\infty}\lambda_n^q\int_0^T |f_n(t)|^2\,dt= \|f\|_{L^2(0,T;\dH q)}^2.
  \end{aligned}
\end{equation*}
The estimate on $\| \PD u \|_{L^2(0,t;\dH {q})}$ follows analogously. This completes the proof.
\end{proof}

\begin{remark}\label{weakest_sol}
The condition $f\in  L^\infty(0,T;\dH q)$ in Theorem \ref{thm:inhomogreg} can be weakened
to $f\in L^r(0,T;\dH q)$ with $r>1/\alpha$. This follows from Lemma \ref{lem:barE} and
H\"older's inequality with $r'$, $1/r' + 1/r=1$
\begin{equation*}
  \begin{aligned}
    \|u(\cdot,t)\|_{\dH q} & \leq \int_0^t \|\bar{E}(t-s)f(s)\|_{\dH {q}} ds
       \leq C\int_0^t (t-s)^{\alpha-1} \|f(s)\|_{\dH q}ds \\
     &  \leq C\left(\frac{t^{1+r'(\alpha-1)}}{1+r'(\alpha-1)}\right)^{1/r'}\|f\|_{L^r(0,t;\dot H^q(\Omega))},
  \end{aligned}
\end{equation*}
where $1+r'(\alpha-1)>0$ by the condition $r>1/\alpha$. It follows from this that the initial
condition $u(\cdot,0)=0$ holds in the following sense: $\lim_{t\to0^+}\|u(\cdot,t)\|_{\dH q}=0$.
Hence for any $\alpha\in (1/2,1)$ the representation \eqref{eqn:solrep} remains a
legitimate solution under the weaker condition $f\in L^2(0,T;\dH q)$.
\end{remark}

\section{Error Estimates for Semidiscrete Galerkin Scheme}\label{sec:galerkin}
Now we derive and analyze a space semidiscrete Galerkin finite element scheme. First we
describe the semidiscrete scheme, and then derive almost optimal error estimates for the homogeneous
and inhomogeneous problems separately. In the analysis we essentially use the technique
developed in \cite{JinLazarovZhou:2013} and improved in \cite{JinLazarovPasciakZhou:2013,JinLazarovPasciakZhou:2013a}.

\subsection{Semidiscrete scheme}
To describe the scheme, we need the $L^2(\Omega)$ projection $P_h:L^2(\Omega)\to X_h$ and
Ritz projection $R_h:H^1_0(\Omega)\to X_h$, respectively, defined by
\begin{equation*}
  \begin{aligned}
    (P_h \psi,\chi) & =(\psi,\chi) \quad\forall \chi\in X_h,\\
    (\nabla R_h \psi,\nabla\chi) & =(\nabla \psi,\nabla\chi) \quad \forall \chi\in X_h.
  \end{aligned}
\end{equation*}

The operators $R_h$ and $P_h$ satisfy the following approximation property.
\begin{lemma}\label{lem:prh-bound}
For any $\psi\in \dH q$, $q=1,2$, the operator $R_h$ satisfies:
\begin{eqnarray*}
   \|R_h \psi-\psi\|_{L^2(\Omega)}+h\|\nabla(R_h \psi-\psi)\|_{L^2(\Omega)}\le Ch^q\| \psi\|_{\dot H^q(\Omega)}.
\end{eqnarray*}
Further, for $s\in [0,1]$ we have
\begin{equation*}
  \begin{aligned}
     \|(I-P_h)\psi \|_{H^s(\Omega)} &\le Ch^{2-s} \|\psi\|_{\dH 2}\quad \forall \psi\in
       H^2(\Omega)\cap H^1_0(\Omega),\\
     \|(I-P_h)\psi \|_{H^s(\Omega)} &\le Ch^{1-s} \|\psi\|_{\dH 1}\quad \forall \psi\in H^1_0(\Omega).
  \end{aligned}
\end{equation*}
\end{lemma}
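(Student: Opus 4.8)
The plan is to establish the Ritz projection bounds first and then bootstrap to the $L^2$ projection, using in both cases the orthogonality structure of the projections together with standard local interpolation-error estimates. For $R_h$, recall that by definition $R_h\psi$ is the $a(\cdot,\cdot)$-orthogonal projection of $\psi$ onto $X_h$, hence the best $H^1_0(\Omega)$-approximation to $\psi$ from $X_h$, so $\|\nabla(R_h\psi-\psi)\|_{L^2(\Omega)}\le\inf_{\chi\in X_h}\|\nabla(\psi-\chi)\|_{L^2(\Omega)}$. Taking $\chi=0$ gives $\le\|\psi\|_{\dH1}$ when $q=1$; when $q=2$ (so that $\psi\in H^2(\Omega)\hookrightarrow C(\overline\Omega)$ for $d\le 3$) taking $\chi$ to be the nodal interpolant $I_h\psi$ and invoking the Bramble--Hilbert lemma on a shape-regular mesh gives $\le Ch\|\psi\|_{\dH2}$. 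For the $L^2(\Omega)$ part one runs the Aubin--Nitsche duality argument: with $e=R_h\psi-\psi$, let $w\in\dH2$ solve $-\Delta w=e$, so that $\|w\|_{\dH2}\le C\|e\|_{L^2(\Omega)}$; then, using Galerkin orthogonality $a(e,\chi)=0$ for $\chi\in X_h$ and the $H^1$-estimate just proved for $R_h w$,
\[
   \|e\|_{L^2(\Omega)}^2=a(e,w)=a(e,w-R_hw)\le\|\nabla e\|_{L^2(\Omega)}\|\nabla(w-R_hw)\|_{L^2(\Omega)}\le Ch\|\nabla e\|_{L^2(\Omega)}\|e\|_{L^2(\Omega)},
\]
whence $\|e\|_{L^2(\Omega)}\le Ch\|\nabla e\|_{L^2(\Omega)}$; combining with the $H^1$-bound yields the first assertion for $q=1,2$.

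For $P_h$, the case $s=0$ is immediate from the $L^2(\Omega)$-optimality of $P_h$, namely $\|(I-P_h)\psi\|_{L^2(\Omega)}\le\|(I-R_h)\psi\|_{L^2(\Omega)}$, together with the Ritz estimate above. The case $s=1$ is the one requiring care: write $\nabla(\psi-P_h\psi)=\nabla(\psi-R_h\psi)+\nabla(R_h\psi-P_h\psi)$, bound the first term by the Ritz $H^1$-estimate, and the second by the inverse inequality $\|\nabla\chi\|_{L^2(\Omega)}\le Ch^{-1}\|\chi\|_{L^2(\Omega)}$ for $\chi\in X_h$ (here quasi-uniformity of $\mathcal{T}_h$ enters) applied to $\chi=R_h\psi-P_h\psi$, followed by the triangle inequality and the $L^2(\Omega)$-bounds on $R_h$ and $P_h$; this produces $\|(I-P_h)\psi\|_{H^1(\Omega)}\le Ch^{q-1}\|\psi\|_{\dH q}$, $q=1,2$. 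The remaining range $s\in(0,1)$ then follows by operator interpolation: since $H^s(\Omega)$ is (equivalent to) the interpolation space between $L^2(\Omega)$ and $H^1_0(\Omega)$, interpolating the $s=0$ and $s=1$ bounds for the map $\psi\mapsto(I-P_h)\psi$ from $\dH q$ yields the claimed rate $h^{q-s}$.

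The only genuinely delicate step is the $s=1$ bound for $I-P_h$: unlike $R_h$, the operator $P_h$ is not $H^1_0(\Omega)$-stable by construction, so one is forced to route the estimate through $R_h$ and absorb a factor $h^{-1}$ from the inverse inequality, which is exactly compensated by the extra order of $L^2(\Omega)$-accuracy. Everything else — C\'ea's lemma, the nodal interpolation-error estimate, Aubin--Nitsche duality, and Hilbert-scale interpolation — is routine, so the write-up can afford to be brief on those points.
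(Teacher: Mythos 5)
Your proof is correct. The paper states this lemma without proof, as it is a standard finite element approximation result (see, e.g., Thom\'ee's book cited in the paper), and your argument is exactly the canonical one: C\'ea's lemma plus nodal interpolation for the $H^1$ bound on $R_h$, Aubin--Nitsche duality (valid here because $\Omega$ is convex, so the dual problem has full $\dH{2}$ regularity) for the $L^2$ bound, the $L^2$-optimality of $P_h$ for $s=0$, the inverse inequality (using the quasi-uniformity assumed in the paper) routed through $R_h$ for $s=1$, and Hilbert-scale interpolation for $s\in(0,1)$. Nothing is missing.
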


Now we can describe the semidiscrete Galerkin scheme. Upon introducing the discrete Laplacian $\Delta_h: X_h\to X_h$ defined by
\begin{equation*}
  -(\Delta_h\psi,\chi)=(\nabla\psi,\nabla\chi)\quad\forall\psi,\,\chi\in X_h,
\end{equation*}
and $f_h= P_h f$, we may write the spatially discrete problem \eqref{eqn:fem} as
\begin{equation}\label{eqn:fem-operator}
   \PD u_{h}(t)-\Delta_h u_h(t) =f_h(t) \for t\ge0 \quad \mbox{with} \quad  u_h(0)=v_h,
\end{equation}
where $v_h\in X_h$ is an approximation to the initial data $v$. Next we give a solution
representation of \eqref{eqn:fem-operator} using the eigenvalues and eigenfunctions
$\{\la^h_j\}_{j=1}^{N}$ and $\{\fy_j^h\}_{j=1}^{N}$ of the discrete Laplacian
$-\Delta_h$. First we introduce the operators $E_h$ and $\bar{E}_h$, the discrete analogues
of \eqref{eqn:opE2} and \eqref{eqn:Ebar}, for $t>0 $, defined respectively by
\begin{equation}\label{eqn:Eh}
  \begin{split}
    E_h(t)v_h&=\sum_{j=1}^{N}E_{\vecal,1} (-\la_j^ht^{\al},-b_1t^{\al-\al_1},...,-b_mt^{\al-\al_m}) (v,\fy_j^h)\fy_j^h\\
      & + \sum_{i=1}^m b_it^{\al-\al_i} \sum_{j=1}^{N} E_{\vecal,1+\al-\al_i}
       (-\la_j^ht^{\al},-b_1t^{\al-\al_1},...,-b_mt^{\al-\al_m}) (v,\fy_j^h)\fy_j^h,\\
\end{split}
\end{equation}
and
\begin{equation}\label{eqn:Ehbar}
 \Etilh(t) f_h = \sum_{j=1}^N t^{\al-1} E_{\vecal,\al}
 (-\la_j^ht^{\al},-b_1t^{\al-\al_1},...,-b_mt^{\al-\al_m})\,(f_h,\fy^h_j) \, \fy_j^h.
\end{equation}
Then the solution $u_h$ of the discrete problem \eqref{eqn:fem-operator} can be expressed by:
\begin{equation}\label{eqn:Duhamelh}
     u_h(x,t)= E_h(t) v_h + \int_0^t \Etilh(t-s) f_h(s)\,ds.
\end{equation}

On the finite element space $X_h$, we introduce
the discrete norm $\tribar \cdot\tribar_{\dH p}$ defined by
\begin{equation*}
  \tribar \psi\tribar_{\dH p}^2 =
      \sum_{j=1}^N(\la_j^h)^p(\psi,\fy_j^h)^2\quad \psi\in X_h.
\end{equation*}
The norm $\tribar \cdot\tribar_{\dH p}$ is well defined for all real $p$. Clearly,
$\tribar \psi\tribar_{\dH 1}=\|\psi\|_{\dH 1}$
and $\tribar \psi\tribar_{\dH 0}=\|\psi\|_{L^2(\Omega)}$ for any $\psi\in X_h$.
Further, the following inverse inequality holds \cite{JinLazarovZhou:2013}:
if the mesh $\mathcal{T}_h$ is quasi-uniform, then for any $l>s$
\begin{equation}\label{eqn:inverse}
  \tribar \psi\tribar_{\dH l}\le Ch^{s-l}\tribar \psi\tribar_{\dH {s}}\quad \forall \psi\in X_h.
\end{equation}

\begin{lemma}\label{lem:regh}
Assume that the mesh $\mathcal{T}_h$ is quasi-uniform. Then
for any $v_h \in X_h$ the function $u_h(t)=E_h(t)v_h$ satisfies
\begin{equation*}
    \tribar \PD^\ell u_h(t) \tribar_{\dH p}
        \le Ct^{ -\al(\ell + (p-q)/2)}\tribar v_h\tribar_{\dH q}, \quad t >0,
\end{equation*}
where for $\ell=0$, $0\leq p-q\leq 2$ and for $\ell=1$, $p \le q \le p+2$.
\end{lemma}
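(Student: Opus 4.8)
The plan is to transcribe the proof of Theorem~\ref{thm:homogreg} to the finite element space, replacing the spectral decomposition of $-\Delta$ by that of the discrete Laplacian $-\Delta_h$ and the infinite series by the finite sum over the $N$ discrete eigenpairs $\{\la_j^h,\fy_j^h\}_{j=1}^N$. All the structural ingredients carry over, since Lemmas~\ref{lem:MLbound} and \ref{lem:multiMLprop} are statements about the multinomial Mittag-Leffler function and do not care whether the spectral parameter is $\la_j$ or $\la_j^h$; the numbering of the modes never enters the constants.

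First I would treat the case $\ell=0$ with $0\le p-q\le 2$. By \eqref{eqn:Eh}, the $j$-th discrete Fourier coefficient of $E_h(t)v_h$ equals
\[
  \Big(E_{\vecal,1}(-\la_j^ht^\al,-b_1t^{\al-\al_1},\dots,-b_mt^{\al-\al_m})+\sum_{i=1}^m b_it^{\al-\al_i}E_{\vecal,1+\al-\al_i}(-\la_j^ht^\al,\dots)\Big)(v_h,\fy_j^h),
\]
and I would apply Lemma~\ref{lem:MLbound} with $\beta_1=\al$ (note $\al>\al-\al_i$, and $\arg(-\la_j^ht^\al)=\pi$ lies in the admissible sector once $\mu\in(\tfrac{\al\pi}{2},\al\pi)$ is fixed) to bound every Mittag-Leffler factor by $C/(1+\la_j^ht^\al)$. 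Since the prefactors $b_it^{\al-\al_i}$ remain bounded for $t\le T$, the whole coefficient is at most $C(1+\la_j^ht^\al)^{-1}|(v_h,\fy_j^h)|$ with $C$ independent of $h$, $j$ and $N$. Squaring, summing over $j$, and factoring out $t^{-(p-q)\al}$ gives
\[
  \tribar E_h(t)v_h\tribar_{\dH p}^2\le Ct^{-(p-q)\al}\sum_{j=1}^N\frac{(\la_j^ht^\al)^{p-q}}{(1+\la_j^ht^\al)^2}(\la_j^h)^q(v_h,\fy_j^h)^2,
\]
and the elementary inequality $\sup_{x\ge0}x^{p-q}(1+x)^{-2}\le C$, valid precisely for $0\le p-q\le 2$, yields $\tribar E_h(t)v_h\tribar_{\dH p}\le Ct^{-\al(p-q)/2}\tribar v_h\tribar_{\dH q}$.

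For $\ell=1$ with $p\le q\le p+2$ I would first observe, exactly as in the proof of Theorem~\ref{thm:homogreg} via Lemma~\ref{lem:multiMLprop} applied modewise, that each scalar coefficient of $E_h(t)v_h$ solves $\PD c_j(t)+\la_j^hc_j(t)=0$ with $c_j(0)=1$; hence $u_h=E_h(t)v_h$ satisfies the homogeneous form of \eqref{eqn:fem-operator} and $\PD E_h(t)v_h=\Delta_h E_h(t)v_h$. Therefore $\tribar\PD E_h(t)v_h\tribar_{\dH p}=\tribar\Delta_h E_h(t)v_h\tribar_{\dH p}=\tribar E_h(t)v_h\tribar_{\dH{p+2}}$, and the $\ell=0$ bound with $p$ replaced by $p+2$ — legitimate exactly when $0\le(p+2)-q\le 2$, i.e. $p\le q\le p+2$ — gives $\tribar\PD E_h(t)v_h\tribar_{\dH p}\le Ct^{-\al(1+(p-q)/2)}\tribar v_h\tribar_{\dH q}$, which is the claim.

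I do not anticipate a genuine obstacle: the argument is a faithful copy of Theorem~\ref{thm:homogreg}. The one point that deserves attention is the $h$-uniformity of the constants. It rests on the fact that the constant in Lemma~\ref{lem:MLbound} depends on $z_1=-\la_j^ht^\al$ only through the decay $C/(1+|z_1|)$ and not otherwise through $|z_1|$, and in particular is independent of $N$; the constant does however depend on the final time $T$ through the bound $b_it^{\al-\al_i}\le K$ needed to invoke Lemma~\ref{lem:MLbound}, which is consistent with the paper's convention of $h$- and $\tau$-independence. A secondary bookkeeping point is the modewise verification, through Lemma~\ref{lem:multiMLprop}, that $E_h(t)v_h$ solves the discrete fractional system, which is what legitimizes the identity $\PD E_h=\Delta_h E_h$ exploited in the case $\ell=1$.
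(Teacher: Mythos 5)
Your proposal is correct and follows essentially the same route as the paper: the case $\ell=0$ is handled by bounding the Mittag--Leffler factors in \eqref{eqn:Eh} via Lemma \ref{lem:MLbound} and the elementary inequality $\sup_{x\ge 0}x^{p-q}(1+x)^{-2}\le C$ for $0\le p-q\le 2$, and the case $\ell=1$ is reduced to it through the identity $\tribar \PD E_h(t)v_h\tribar_{\dH p}=\tribar E_h(t)v_h\tribar_{\dH{p+2}}$. Your extra remarks on the modewise justification of that identity via Lemma \ref{lem:multiMLprop} and on the $h$-uniformity (and $T$-dependence) of the constants are accurate elaborations of points the paper leaves implicit.
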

\begin{proof}
Upon noting $\tribar \PD E_h(t)v_h \tribar_{\dH p}=\tribar  E_h(t)v_h \tribar_{\dH {p+2}}$,
it suffices to show the case $\ell=0$. Using the representation \eqref{eqn:Eh}
and Lemma \ref{lem:MLbound}, we have for $0\leq p-q\leq 2$
\begin{equation*}
\begin{split}
    \tribar E_h(t)v_h \tribar_{\dH p}^2 &\le C \sum_{j=1}^{N}\frac{(\la_j^h)^p}{(1+\la_j^h t^\al)^2} |(v_h,\fy_j^h)|^2\\
       &\le C \sup_{1 \le j\le N}\frac{ (\la_j^h t^\al)^{p-q}}{(1+\la_j^h t^\al)^2}
       t^{-(p-q)\al}\sum_{j=1}^N\frac{(\la_j^h t^{\al})^{p-q}}{(1+\la_j^h t^\al)^2}(\la_j^h)^q |(v_h,\fy_j^h)|^2\\
       &\le C t^{-(p-q)\al}  \tribar v_h \tribar_{\dH q}^2,
\end{split}
\end{equation*}
where the last inequality follows from $\sup_{1 \le j\le N}\frac{
(\la_j^h t^\al)^{p-q}}{(1+\la_j^h t^\al)^2}\le C$ for $0\leq p-q\leq 2$.
\end{proof}

The next result is a discrete analogue to Lemma \ref{lem:barE}.
\begin{lemma}\label{lem:reg-f}
Let $\Etilh$ be defined by \eqref{eqn:Ehbar} and $ \chi \in X_h$. Then for all $t >0$
\begin{equation*}
 \tribar \Etilh(t) \chi \tribar_{\dH p} \le \left \{
\begin{array}{ll}
  Ct^{ -1 + \al(1 + (q -p)/2)}\tribar \chi \tribar_{\dH q}, & \quad 0\le p-q\le 2, \\[1.3ex]
  Ct^{ -1 + \alpha }\tribar \chi \tribar_{\dH q},  & \quad p< q.
\end{array} \right.
\end{equation*}
\end{lemma}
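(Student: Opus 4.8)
The plan is to mirror the proof of Lemma \ref{lem:barE} almost verbatim, replacing the continuous spectral data $(\la_j,\fy_j)$ by the discrete data $(\la_j^h,\fy_j^h)$ and integrals over $\Omega$ by finite sums over $j=1,\dots,N$. Expanding $\chi\in X_h$ in the basis $\{\fy_j^h\}$ and using the definition \eqref{eqn:Ehbar}, one gets
\[
  \tribar \Etilh(t)\chi \tribar_{\dH p}^2
   = t^{2(\al-1)}\sum_{j=1}^N (\la_j^h)^p\,
     \bigl|E_{\vecal,\al}(-\la_j^ht^\al,-b_1t^{\al-\al_1},\dots,-b_mt^{\al-\al_m})\bigr|^2 (\chi,\fy_j^h)^2 .
\]
Here the hypotheses of Lemma \ref{lem:MLbound} are met with $\beta_1=\al$ (which dominates the remaining orders $\al-\al_i$ since $\al_m>0$): the subsidiary arguments $-b_it^{\al-\al_i}$ are negative and, for $t\in(0,T]$, bounded below by $-K$ with $K=\max_i b_iT^{\al-\al_i}$ independent of $h$, while the principal argument $-\la_j^h t^\al$ is negative real, so $|\arg(-\la_j^h t^\al)|=\pi$ falls in the admissible sector for any $\al\pi/2<\mu<\al\pi$. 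Hence $|E_{\vecal,\al}(\cdots)|\le C/(1+\la_j^h t^\al)$ with $C$ uniform in $j$ and $h$.

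For the range $0\le p-q\le 2$ I would write $(\la_j^h)^p=t^{-(p-q)\al}(\la_j^h t^\al)^{p-q}(\la_j^h)^q$, insert the bound above, and then use the elementary inequality $\sup_{x\ge 0}x^{p-q}/(1+x)^2\le C$ valid for $0\le p-q\le 2$, which collapses the sum to $\tribar\chi\tribar_{\dH q}^2$:
\[
  \tribar \Etilh(t)\chi \tribar_{\dH p}^2
  \le C t^{2(\al-1)-(p-q)\al}\sum_{j=1}^N \frac{(\la_j^h t^\al)^{p-q}}{(1+\la_j^h t^\al)^2}(\la_j^h)^q(\chi,\fy_j^h)^2
  \le C t^{2(\al-1)-(p-q)\al}\tribar\chi\tribar_{\dH q}^2 .
\]
Taking square roots gives the exponent $\al-1-(p-q)\al/2=-1+\al(1+(q-p)/2)$, which is the first asserted bound.

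For the range $p<q$ the factorization above is useless because $(\la_j^h t^\al)^{p-q}$ blows up as $\la_j^h t^\al\to0$; instead I would keep only the crude bound $|E_{\vecal,\al}(\cdots)|\le C$ from Lemma \ref{lem:MLbound} and absorb the spectral power by $(\la_j^h)^p=(\la_j^h)^{p-q}(\la_j^h)^q\le\la_1^{p-q}(\la_j^h)^q$, where the last step uses $p-q<0$ together with the $h$-independent lower bound $\la_j^h\ge\la_1^h\ge\la_1>0$ (the latter by the min--max principle, since $X_h\subset H^1_0(\Omega)$). This yields $\tribar \Etilh(t)\chi\tribar_{\dH p}^2\le C t^{2(\al-1)}\tribar\chi\tribar_{\dH q}^2$, i.e., the second bound after taking square roots.

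There is no real obstacle here: the whole argument is a transcription of the proof of Lemma \ref{lem:barE} to the finite-dimensional space $X_h$. The only two points worth a line of justification are (i) that the constant in Lemma \ref{lem:MLbound} may be chosen independently of $h$ --- it is, since the only data-dependent quantity, the lower bound $K$ on the subsidiary arguments, involves just $T$ and the fixed $b_i,\al_i$ --- and (ii) in the case $p<q$, the uniform bound $\la_j^h\ge\la_1$, which prevents the constant from degenerating as $h\to0$.
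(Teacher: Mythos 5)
Your proposal is correct and follows essentially the same route as the paper: the authors likewise dispatch the case $0\le p-q\le 2$ by repeating the argument of Lemma \ref{lem:barE} with $(\la_j^h,\fy_j^h)$ in place of $(\la_j,\fy_j)$, and handle $p<q$ by invoking the $h$-independent lower bound on the discrete eigenvalues. Your write-up merely supplies the details (the uniformity of the constant in Lemma \ref{lem:MLbound} and the min--max justification of $\la_j^h\ge\la_1$) that the paper leaves implicit.
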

\begin{proof}
The proof for the case $0\le p-q\le2$ is similar to Lemma \ref{lem:barE}. The other assertion
follows from the fact that $\{\lambda_j^h\}_{j=1}^N$ are bounded from zero independent of $h$.
\end{proof}

\subsection{Error estimates for the homogeneous problem}
To derive error estimates, first we consider the
case of smooth initial data, i.e., $v \in \dH 2$. To this end, we split the error
$u_h(t)-u(t)$ into two terms:
\begin{equation*}
  u_h-u= (u_h-R_hu)+(R_hu-u):=\vth + \varrho.
\end{equation*}
By Lemma \ref{lem:prh-bound} and Theorem \ref{thm:homogreg}, we have for any $t>0$
\begin{equation}\label{eqn:rho-bound}
 \| \varrho(t) \|_{L^2(\Omega)} + h \|\nabla\varrho(t)\|_{L^2(\Omega)}\le Ch^2 t^{-(1-q/2)\al}\|v\|_{\dH q} \quad v\in \dH q, q=1,2.
\end{equation}
So it suffices to get proper estimates for $\vth(t) $, which is given below.

\begin{lemma}\label{lem:vth-smooth}
The function $\vth(t):=u_h(t)-R_hu(t)$ satisfies for $p=0,1$
\begin{equation*}
  \|\vth(t)\|_{\dH p} \leq Ch^{2-p}\|v\|_{\dH 2} .
\end{equation*}
\end{lemma}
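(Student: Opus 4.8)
The function $\vth(t) = u_h(t) - R_h u(t)$ lives in $X_h$ and I will derive an error equation for it by comparing the semidiscrete problem \eqref{eqn:fem-operator} with the Galerkin orthogonality of the Ritz projection. Since $u$ solves $\PD u - \Delta u = 0$ and $u_h$ solves $\PD u_h - \Delta_h u_h = 0$, subtracting and using $-(\Delta_h R_h u, \chi) = (\nabla R_h u, \nabla \chi) = (\nabla u, \nabla \chi) = (-\Delta u, \chi)$ for $\chi \in X_h$, one finds that $\vth$ satisfies
\begin{equation*}
  \PD \vth(t) - \Delta_h \vth(t) = -P_h \PD \varrho(t) = -P_h \PD (R_h u - u)(t), \qquad \vth(0) = v_h - R_h v.
\end{equation*}
By Duhamel's principle \eqref{eqn:Duhamelh} this gives the representation
\begin{equation*}
  \vth(t) = E_h(t)\vth(0) - \int_0^t \Etilh(t-s)\, P_h \PD \varrho(s)\, ds.
\end{equation*}
For the smooth-data case one naturally chooses $v_h = R_h v$, so the first term vanishes, and everything reduces to estimating the convolution term.

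Next I would bound the convolution using Lemma \ref{lem:reg-f} with $q = 0$: for $p = 0,1$,
\begin{equation*}
  \|\vth(t)\|_{\dH p} \le \int_0^t \tribar \Etilh(t-s) P_h \PD \varrho(s)\tribar_{\dH p}\, ds
  \le C \int_0^t (t-s)^{-1+\al(1-p/2)} \|\PD \varrho(s)\|_{L^2(\Omega)}\, ds.
\end{equation*}
(Here $\tribar P_h \PD \varrho(s)\tribar_{L^2} = \|P_h \PD \varrho(s)\|_{L^2} \le \|\PD \varrho(s)\|_{L^2}$ since $P_h$ is an $L^2$-projection.) Now $\PD \varrho = (R_h - I)\PD u$ commutes with $\PD$ because $R_h$ and $\Delta$ commute in the relevant sense, so by Lemma \ref{lem:prh-bound} and then Theorem \ref{thm:homogreg} with $\ell = 1$, $q = 2$, $p = 0$ (note $p - q = -2$ is in the allowed range),
\begin{equation*}
  \|\PD \varrho(s)\|_{L^2(\Omega)} \le C h^2 \|\PD u(s)\|_{\dH 2} = C h^2 \|\PD E(s) v\|_{\dH 2}
  = C h^2 \|E(s) v\|_{\dH 4}.
\end{equation*}
That last step is problematic since $\dH 4$ is outside the theorem's range; the cleaner route is to write $\|\PD \varrho(s)\|_{L^2} \le C h^2 \|\PD u(s)\|_{\dH 2}$ and instead apply Theorem \ref{thm:homogreg} directly with $\ell = 1$, $q = 2$, $p = 2$, giving $\|\PD u(s)\|_{\dH 2} \le C s^{-\al}\|v\|_{\dH 2}$. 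Wait — I should use Lemma \ref{lem:prh-bound} on $\PD u(s) \in \dH 2$ to get $\|(R_h-I)\PD u(s)\|_{L^2} \le C h^2 \|\PD u(s)\|_{\dH 2} \le C h^2 s^{-\al}\|v\|_{\dH 2}$.

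Assembling these, $\|\vth(t)\|_{\dH p} \le C h^2 \|v\|_{\dH 2}\int_0^t (t-s)^{-1+\al(1-p/2)} s^{-\al}\, ds$. The integral is a Beta-function-type integral $\int_0^t (t-s)^{a-1} s^{b-1}\, ds = t^{a+b-1} B(a,b)$ with $a = \al(1-p/2) > 0$ and $b = 1-\al > 0$, so it equals $C t^{\al(1-p/2) - \al} = C t^{-\al p/2}$, which is \emph{bounded} for $p = 0$ but blows up like $t^{-\al/2}$ for $p = 1$. This mismatch with the claimed estimate (which has no time singularity) is the main obstacle, and it signals that the crude bound $\|\PD\varrho(s)\|_{L^2} \le Ch^2 s^{-\al}\|v\|_{\dH 2}$ is too lossy near $s = 0$; one must instead exploit cancellation, e.g.\ by integration by parts in the Duhamel integral (moving $\PD$ off $\varrho$ and onto $\Etilh$, picking up a boundary term at $s=t$ involving $\varrho(t)$ directly, which is controlled by \eqref{eqn:rho-bound} with $q=2$ as $Ch^2$) together with a bound on $\int_0^t \|\tfrac{d}{ds}\Etilh(t-s)\|\,\|\varrho(s)\|\, ds$. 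The bound \eqref{eqn:rho-bound} gives $\|\varrho(s)\|_{L^2} \le Ch^2 s^{-\al}\|v\|_{\dH 2}$ but also $\|\varrho(s)\|_{L^2}\le Ch^2\|v\|_{\dH 2}$ when we keep $q=2$ without the time factor — since $u(s) = E(s)v$ and Theorem \ref{thm:homogreg} with $\ell=0$, $q=2=p$ gives $\|u(s)\|_{\dH 2}\le C\|v\|_{\dH 2}$ uniformly. Therefore the correct, sharper estimate is simply $\|\varrho(s)\|_{L^2} \le Ch^2\|v\|_{\dH 2}$ with no singularity, and then for $p=1$ one uses the inverse inequality \eqref{eqn:inverse} on $\vth \in X_h$: $\|\vth(t)\|_{\dH 1} \le Ch^{-1}\|\vth(t)\|_{L^2} \le Ch^{-1}\cdot Ch^2\|v\|_{\dH 2} = Ch\|v\|_{\dH 2}$, which is exactly the claim for $p=1$. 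So the clean proof is: establish $\|\vth(t)\|_{L^2} \le Ch^2\|v\|_{\dH 2}$ via the Duhamel representation together with the \emph{bounded} (non-singular) estimate on $\varrho$ and $\PD\varrho$, then bootstrap to $p=1$ by the inverse inequality. I would write up this two-step strategy, flagging the integration-by-parts / bounded-estimate trick as the one genuine subtlety.
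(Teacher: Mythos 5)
Your error equation, Duhamel representation, and the choice $v_h=R_hv$ (so that $\vth(0)=0$) all match the paper exactly. Your first computation already proves the $p=0$ case correctly and is identical to the paper's: $\|\PD\varrho(s)\|_{L^2(\Omega)}\le Ch^2\|\PD u(s)\|_{\dH 2}\le Ch^2 s^{-\al}\|v\|_{\dH 2}$, and the Beta integral $\int_0^t(t-s)^{\al-1}s^{-\al}\,ds$ is bounded uniformly in $t$; no integration by parts and no ``cancellation'' is needed there, so the detour you flag as the ``one genuine subtlety'' is a red herring. Where you diverge from the paper is the $p=1$ case. The paper rebalances the two ingredients: it applies Lemma \ref{lem:prh-bound} with the lower order, $\|\PD\varrho(s)\|_{L^2(\Omega)}\le Ch^{2-p}\|\PD u(s)\|_{\dH{2-p}}$, and Theorem \ref{thm:homogreg} then gives the milder singularity $s^{-(1-p/2)\al}$, which is exactly matched by the kernel $(t-s)^{(1-p/2)\al-1}$ from Lemma \ref{lem:reg-f}; the Beta integral is again $O(1)$ and one gets $Ch^{2-p}\|v\|_{\dH 2}$ for both $p$ in a single uniform argument. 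Your alternative --- prove $p=0$ and bootstrap to $p=1$ by the inverse inequality \eqref{eqn:inverse} --- is also valid, since $\vth(t)\in X_h$ and the mesh is assumed quasi-uniform, and it is arguably simpler; its only cost is that it leans on quasi-uniformity where the paper's balancing does not (for this particular step). One genuine error in your narrative should be corrected before writing this up: there is \emph{no} non-singular estimate ``on $\varrho$ and $\PD\varrho$'' --- Theorem \ref{thm:homogreg} with $\ell=1$, $p=q=2$ gives only $\|\PD u(s)\|_{\dH 2}\le Cs^{-\al}\|v\|_{\dH 2}$, so $\|\PD\varrho(s)\|_{L^2(\Omega)}$ is necessarily singular at $s=0$; what saves the $p=0$ case is not boundedness of the integrand but integrability of $s^{-\al}$ against $(t-s)^{\al-1}$. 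The uniform bound $\|\varrho(s)\|_{L^2(\Omega)}\le Ch^2\|v\|_{\dH 2}$ is true but plays no role in the Duhamel integral, which involves $\PD\varrho$, not $\varrho$.
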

\begin{proof}
Using the identity $\Delta_hR_h=P_h\Delta$, we note that $\vth $ satisfies
\begin{equation*}
\PD \vth(t) -\Delta_h\vth(t) =-P_h \PD \varrho(t) \for t > 0,
\end{equation*}
with $\vth(0)=0$. By the representation \eqref{eqn:Duhamelh},
\begin{equation*} 
\vth(t)=-\int_0^t \Etilh(t-s) P_h \PD \varrho(s)\,ds.
\end{equation*}
Then by Lemmas \ref{lem:reg-f} and \ref{lem:prh-bound}, and Theorem \ref{thm:homogreg}
we have for $p=0,1$
\begin{equation*}
\begin{split}
  \|\vth(t)\|_{\dH p} &\le \int_0^t \| \Etilh(t-s) P_h \PD \varrho(s) \|_{\dH p} \,ds  \\
    & \le C \int_0^t (t-s)^{(1-p/2)\al-1} \|\PD \varrho(s) \|_{L^2(\Omega)}\,ds \\
    & \le C h^{2-p}\int_0^t (t-s)^{(1-p/2)\al-1} \|\PD u(s) \|_{\dH {2-p}}\,ds \\
    & \le C h^{2-p} \int_0^t (t-s)^{(1-p/2)\al-1} s^{-(1-p/2)\al} \,ds \|v\|_{\dH 2} \le Ch^{2-p}\|v\|_{\dH 2},
\end{split}
\end{equation*}
which is the desired result.
\end{proof}

Using \eqref{eqn:rho-bound}, Lemma \ref{lem:vth-smooth} and the triangle inequality,
we arrive at our first estimate, which is formulated in the following Theorem:
\begin{theorem}\label{thm:SG-smooth}
Let $v\in \dH 2$ and $f\equiv 0$, and $u$ and $u_h$ be the solutions of \eqref{eqn:goveq} and \eqref{eqn:fem} with $v_h=R_hv$, respectively. Then
\begin{equation*}
  \|u_h(t)- u(t)\|_{L^2(\Omega)} + h\|\nabla (u_h(t) - u(t))\|_{L^2(\Omega)} \le Ch^2 \|v\|_{\dH 2}.
\end{equation*}
\end{theorem}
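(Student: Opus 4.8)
The plan is to prove Theorem \ref{thm:SG-smooth} by combining the two pieces of the error decomposition $u_h - u = \vth + \varrho$ that have already been prepared. Since all the work has been done in Lemma \ref{lem:vth-smooth} and in the bound \eqref{eqn:rho-bound}, the proof is essentially an application of the triangle inequality; the only genuine content is to assemble the powers of $h$ correctly and to observe that, for $v \in \dH 2$ (i.e. $q=2$), the $t$-dependent factor $t^{-(1-q/2)\al}$ in \eqref{eqn:rho-bound} becomes $t^{0}=1$, so the bound on $\varrho$ is uniform in $t$.

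Concretely, I would proceed as follows. First, recall the splitting $u_h(t) - u(t) = \vth(t) + \varrho(t)$ with $\vth = u_h - R_h u$ and $\varrho = R_h u - u$. Second, for the $L^2$ norm, write
\begin{equation*}
  \|u_h(t) - u(t)\|_{L^2(\Omega)} \le \|\vth(t)\|_{L^2(\Omega)} + \|\varrho(t)\|_{L^2(\Omega)},
\end{equation*}
bound the first term by $Ch^2\|v\|_{\dH 2}$ using Lemma \ref{lem:vth-smooth} with $p=0$, and bound the second by $Ch^2\|v\|_{\dH 2}$ using \eqref{eqn:rho-bound} with $q=2$ (noting $t^{-(1-q/2)\al}=1$). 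Third, for the gradient term, similarly estimate
\begin{equation*}
  h\|\nabla(u_h(t)-u(t))\|_{L^2(\Omega)} \le h\|\nabla\vth(t)\|_{L^2(\Omega)} + h\|\nabla\varrho(t)\|_{L^2(\Omega)};
\end{equation*}
here $h\|\nabla\vth(t)\|_{L^2(\Omega)} = h\,\|\vth(t)\|_{\dH 1} \le Ch^2\|v\|_{\dH 2}$ by Lemma \ref{lem:vth-smooth} with $p=1$, and $h\|\nabla\varrho(t)\|_{L^2(\Omega)} \le Ch^2\|v\|_{\dH 2}$ again from \eqref{eqn:rho-bound} with $q=2$. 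Adding the two displays and absorbing constants into a single $C$ gives the claimed estimate.

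There is really no main obstacle here; all the analytical difficulty — in particular the integral $\int_0^t (t-s)^{(1-p/2)\al - 1} s^{-(1-p/2)\al}\,ds$, which is a Beta-function integral finite and independent of $t$ — has already been dispatched inside the proof of Lemma \ref{lem:vth-smooth}, and the regularity input $\|\PD u(s)\|_{\dH{2-p}} \le C s^{-(1-p/2)\al}\|v\|_{\dH 2}$ is supplied by Theorem \ref{thm:homogreg} with $q=2$. The one point worth stating explicitly in the write-up is that the choice $v_h = R_h v$ is what makes $\vth(0) = u_h(0) - R_h u(0) = R_h v - R_h v = 0$, which is the initial condition used in Lemma \ref{lem:vth-smooth}. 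With that remark, the proof is a two-line triangle-inequality argument.
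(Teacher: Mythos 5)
Your proposal is correct and matches the paper's argument exactly: the paper also obtains Theorem \ref{thm:SG-smooth} by the triangle inequality applied to the splitting $u_h-u=\vth+\varrho$, using \eqref{eqn:rho-bound} with $q=2$ (so the factor $t^{-(1-q/2)\al}$ equals $1$) and Lemma \ref{lem:vth-smooth} for $p=0,1$. Your added remark that $v_h=R_hv$ is precisely what yields $\vth(0)=0$ is a correct and worthwhile observation, already implicit in the paper's setup.
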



Now we turn to the nonsmooth case, i.e., $v\in \dH q$ with $-1< q \leq  1$. Since the Ritz
projection $R_h$ is not well-defined for nonsmooth data, we use instead the $L^2
(\Omega)$-projection $v_h=P_hv$ and split the error $u_h-u$ into:
\begin{equation}\label{eqn:splitnonsmooth}
  u_h-u=(u_h-P_hu)+(P_hu-u):=\vtht + \rlh.
\end{equation}
By Lemma \ref{lem:prh-bound} and Theorem \ref{thm:homogreg} we have for $-1\leq q\leq 1$
\begin{equation*} 
  \| \rlh(t) \|_{L^2(\Omega)} + h \|\nabla\rlh(t) \|_{L^2(\Omega)}
  \leq Ch^2\|u(t)\|_{\dH 2}
   \leq Ch^2 t^{-\al(1-q/2)}\|v\|_{\dH q}. 
\end{equation*}
Thus, we only need to estimate the term $\vtht(t)$, which is stated in the following lemma.
\begin{lemma}\label{lem:vtht}
Let $\vtht(t)=u_h(t)-P_hu(t)$. Then for $p=0,1$, $-1< q\le1$,
there holds (with $\ell_h=|\ln h|$)
\begin{equation*}
  \|\vtht(t)\|_{\dH p}\leq Ch^{\min(q,0)+2-p}\ell_h
    t^{-\al\left(1-\max(q/2,0)\right)} \| v \|_{\dH q}.
\end{equation*}
\end{lemma}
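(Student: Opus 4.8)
The plan is to follow the same Duhamel-based strategy as in Lemma \ref{lem:vth-smooth}, but now working with the $L^2$-projection $P_h$ instead of the Ritz projection $R_h$, which forces an extra term into the error equation. First I would derive the equation satisfied by $\vtht$. Using $-\Delta_h P_h = -\Delta_h R_h + \Delta_h(R_h - P_h) = P_h(-\Delta) + \Delta_h(R_h - P_h)$, and that $u$ solves $\PD u - \Delta u = 0$, one finds that $\vtht$ satisfies $\PD \vtht - \Delta_h \vtht = \Delta_h(P_h - R_h)u =: g_h(t)$ with $\vtht(0) = 0$, so that by \eqref{eqn:Duhamelh},
\begin{equation*}
  \vtht(t) = \int_0^t \Etilh(t-s)\,g_h(s)\,ds.
\end{equation*}
The key point is to bound $\tribar g_h(s)\tribar_{\dH{-1}}$ appropriately: for any $\chi \in X_h$, $(g_h,\chi) = -(\nabla(P_h-R_h)u,\nabla\chi) = -(\nabla(u - R_hu),\nabla\chi) + (\nabla(u-P_hu),\nabla\chi)$, and one estimates these using Lemma \ref{lem:prh-bound} together with an inverse estimate to absorb the $\nabla\chi$; this should give $\tribar g_h(s)\tribar_{\dH{-1}} \le C h\,\|u(s)\|_{\dH 2}$ (gaining one power of $h$), and also the cruder $\tribar g_h(s)\tribar_{\dH{-1}} \le C\|u(s)\|_{\dH 1}$. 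Then by Lemma \ref{lem:reg-f} (with $q = -1$, giving the $(t-s)^{-1+\alpha(1+(p+1)/2)}$-type kernel for $0 \le p+1 \le 2$, i.e.\ $p \le 1$) and Theorem \ref{thm:homogreg},
\begin{equation*}
  \|\vtht(t)\|_{\dH p} \le C h\int_0^t (t-s)^{-1+\alpha(3/2-p/2)/1}\,\|u(s)\|_{\dH 2}\,ds
     \le C h\int_0^t (t-s)^{\alpha(1-(p-1)/2)-1} s^{-\alpha(1-q/2)}\,ds\,\|v\|_{\dH q},
\end{equation*}
and the Beta-function integral converges provided the total exponent exceeds $-1$, which needs care near $s = 0$.

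The main obstacle, and the reason for the logarithmic factor $\ell_h$ and the $\min(q,0)$ and $\max(q/2,0)$ truncations, is precisely this temporal singularity. For $-1 < q \le 0$ the solution regularity $\|u(s)\|_{\dH 2} \le C s^{-\alpha(1-q/2)}\|v\|_{\dH q}$ degrades so much as $s \to 0^+$ that the naive convolution integral $\int_0^t (t-s)^{\beta-1} s^{-\alpha(1-q/2)}\,ds$ diverges (the exponent $-\alpha(1-q/2) \le -\alpha$ can push the sum below $-1$). The standard remedy, as in \cite{JinLazarovZhou:2013,JinLazarovPasciakZhou:2013}, is to split the integral at $s = t/2$: on $(t/2, t)$ use $\|u(s)\|_{\dH 2} \le C t^{-\alpha(1-q/2)}$ and integrate the $(t-s)^{\beta-1}$ singularity; on $(0, t/2)$ use instead the crude bound with $\|u(s)\|_{\dH 1} \le C s^{-\alpha(1/2 - q/2)}\|v\|_{\dH q}$ (for $q \le 0$) or $\|u(s)\|_{\dH 1} \le C\|v\|_{\dH q}$ (for $q \ge 0$), together with the bounded kernel $(t-s)^{\alpha-1}\lesssim t^{\alpha-1}$ from the second branch of Lemma \ref{lem:reg-f}; and further split off a small subinterval $(0, \epsilon)$ or use the inverse inequality \eqref{eqn:inverse} to trade a power of $h$ for the logarithm when the remaining integral is only logarithmically divergent. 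The bookkeeping of which bound to use in which regime of $q$ produces exactly the stated exponent $\min(q,0)+2-p$ in $h$ and $t^{-\alpha(1-\max(q/2,0))}$ in $t$; I expect the $p=1$ case to be the delicate one since there the spatial gain from Lemma \ref{lem:reg-f} is weakest and the inverse inequality must be invoked, generating $\ell_h$. I would carry this out by: (1) establishing the error equation for $\vtht$; (2) proving the two bounds on $g_h$; (3) plugging into the Duhamel formula and splitting the time integral at $t/2$; (4) handling the near-$0$ piece with the crude kernel bound plus, where needed, the inverse inequality to extract $\ell_h$; (5) combining to read off the claimed estimate, checking the borderline cases $q \to -1^+$, $q = 0$, and $p = 1$ separately.
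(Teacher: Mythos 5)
Your error equation and Duhamel representation for $\vtht$ match the paper's, but the core of your argument has a gap that the $t/2$-splitting you propose does not repair. The difficulty in this lemma is not the singularity of $\|u(s)\|_{\dH 2}$ at $s=0$ (the paper never lifts $u$ to $\dH 2$ for $q<0$; it uses $\|u(s)\|_{\dH{\min(q,0)+2}}\le Cs^{-\al(1-\max(q/2,0))}\|v\|_{\dH q}$, whose exponent always exceeds $-1$, and accepts the reduced spatial rate $h^{\min(q,0)+2-p}$ in exchange). The real obstruction is at $s=t$: to gain the full two powers of $h$ from $R_hu-P_hu$ you must measure the forcing $\Delta_h(R_hu-P_hu)$ in $\tribar\cdot\tribar_{\dH{p-2}}$, and then the smoothing exponent of $\Etilh$ in Lemma \ref{lem:reg-f} is $-1+\al(1+(q-p)/2)$ with $p-q=2$, i.e.\ the kernel degenerates to the non-integrable $(t-s)^{-1}$. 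Your choice of measuring $g_h$ in $\dH{-1}$ sidesteps this only by giving up a power of $h$: for $p=0$ you would obtain $O(h^{1+\min(q,0)})$ instead of the claimed $O(h^{2+\min(q,0)})$, and for $p=1$ the kernel with $q=-1$ is already exactly $(t-s)^{-1}$, so the convolution diverges regardless of how you treat $s=0$.

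The paper's resolution is an $\ep$-shift applied uniformly in $p$ and $q$: bound $\tribar\Delta_h(R_hu-P_hu)(s)\tribar_{\dH{p-2+\ep}}=\tribar(R_hu-P_hu)(s)\tribar_{\dH{p+\ep}}$, so the kernel becomes the barely integrable $(t-s)^{\ep\al/2-1}$; use the inverse inequality \eqref{eqn:inverse} to pass from $\dH{p+\ep}$ to $\dH{p}$ at the cost of $h^{-\ep}$; then Lemma \ref{lem:prh-bound} gives $Ch^{\min(q,0)+2-p-\ep}\|u(s)\|_{\dH{\min(q,0)+2}}$. The Beta integral contributes a factor $C\ep^{-1}$, and the choice $\ep=1/\ell_h$ makes $h^{-\ep}$ bounded and $\ep^{-1}=\ell_h$. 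So the logarithm arises for every $q\in(-1,1]$ and both $p=0,1$ from taming the $(t-s)^{-1}$ kernel, not from the near-$s=0$ regime for negative $q$ as you suggest. You correctly sensed that the inverse inequality trades a power of $h$ for the log, but you deploy it as a local patch rather than as the central device; without it your scheme cannot reach the stated rate.
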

\begin{proof}
Obviously, $ P_h \PD \rlh = \PD P_h(P_hu-u)=0$ and using the identity $\Delta_hR_h=P_h\Delta$,
we get the following problem for $\vtht$:
\begin{equation} \label{eqn:vtht2}
 \PD \vtht(t) -\Delta_h \vtht(t) =
- \Delta_h (R_h u - P_h u)(t), \quad t>0, \quad \vtht(0)=0.
\end{equation}
Using \eqref{eqn:Ehbar}, $\vtht(t)$ can be represented by
\begin{equation}\label{eqn:vtht}
  \vtht(t) = - \int_0^t\Etilh(t-s)\Delta_h(R_hu-P_hu)(s)\,ds.
\end{equation}
Let $A=\Etilh(t-s)\Delta_h(R_hu-P_hu)(s)$. Then by Lemma \ref{lem:regh},
there holds for $p=0,1$:
\begin{equation*}
\begin{split}
 \| A \|_{\dH p}
   &\le C (t-s)^{\ep\al/2-1}\tribar \Delta_h(R_hu-P_hu)(s) \tribar_{\dH {p-2+\ep}} \\
   &\le C (t-s)^{\ep\al/2-1}\tribar(R_hu-P_hu)(s) \tribar_{\dH {p+\ep}}.\\
\end{split}
\end{equation*}
Then by \eqref{eqn:inverse}, Theorem \ref{thm:homogreg}, Lemma
\ref{lem:prh-bound} we have for $p=0,1$ and $-1\le q\le 1$
\begin{equation*}
\begin{split}
 \| A \|_{\dH p}
    &\le Ch^{\min(q,0)+2-p-\ep}(t-s)^{\ep\al/2-1}\| u(s) \|_{\dH {\min(q,0)+2}}\\
    &\le Ch^{\min(q,0)+2-p-\ep}(t-s)^{\ep\al/2-1}
    s^{-\left(1-\max(q/2,0)\right)\al}\| v \|_{\dH {q}}.
\end{split}
\end{equation*}
Then plugging the estimate into \eqref{eqn:vtht} yields
\begin{equation*}
\begin{split}
 \| \vtht \|_{\dH p}
    &\le Ch^{\min(q,0)+2-p-\ep} \int_0^t (t-s)^{\ep\al/2-1}
    s^{-\left(1-\max(q/2,0)\right)\al} \, ds \| v \|_{\dH {q}}\\
    &\le C\epsilon^{-1}h^{\min(q,0)+2-p-\ep} t^{-\al\left(1-\max(q/2,0)\right)}\| v \|_{\dH q}.
\end{split}
\end{equation*}
Now with the choice $\ep=1/\ell_h$, we obtain the desired estimate.
\end{proof}

Now the triangle inequality yields an error estimate for nonsmooth initial data.
\begin{theorem}\label{thm:SG-nonsmooth}
Let $f\equiv 0$, $u$  and $u_h$ be the solutions of \eqref{eqn:goveq} with $v\in \dH q$, $-1<q\le 1$, and \eqref{eqn:fem} with
$v_h=P_hv$, respectively. Then with $\ell_h=|\ln h|$, there holds
\begin{equation*}
 \| u_h(t) - u(t) \|_{L^2(\Omega)} +   h  \|\nabla(u_h(t) - u(t))\|_{L^2(\Omega)}
  \le Ch^{\min(q,0)+2} \, \ell_h \,t^{-\al(1-\max(q/2,0))}\|v\|_{\dH q}.
\end{equation*}
\end{theorem}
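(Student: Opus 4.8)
The plan is to build the estimate directly from the error decomposition \eqref{eqn:splitnonsmooth}, $u_h-u=\vtht+\rlh$ with $\vtht=u_h-P_hu$ and $\rlh=P_hu-u$, since the genuinely new work --- the singular-in-time bound on the intermediate quantity $\vtht$ --- is already packaged in Lemma \ref{lem:vtht}. Thus the proof reduces to two steps: (i) bounding the projection error $\rlh$ at the rate $h^{\min(q,0)+2}t^{-\al(1-\max(q/2,0))}\|v\|_{\dH q}$; and (ii) adding this to the bound of Lemma \ref{lem:vtht} via the triangle inequality.

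For step (i) I would combine the $L^2(\Omega)$-projection approximation property of Lemma \ref{lem:prh-bound} with the smoothing estimate of Theorem \ref{thm:homogreg}, distinguishing two regimes. If $0\le q\le1$, Theorem \ref{thm:homogreg} with $\ell=0$, $p=2$ gives $u(t)=E(t)v\in\dH 2$ with $\|u(t)\|_{\dH 2}\le Ct^{-\al(1-q/2)}\|v\|_{\dH q}$, so Lemma \ref{lem:prh-bound} yields $\|\rlh(t)\|_{L^2(\Omega)}+h\|\nabla\rlh(t)\|_{L^2(\Omega)}\le Ch^2t^{-\al(1-q/2)}\|v\|_{\dH q}$, which is exactly the claimed rate since $\min(q,0)=0$ and $\max(q/2,0)=q/2$ there. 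If $-1<q<0$, then $u(t)$ lies only in $\dH{q+2}$ with $q+2\in(1,2)$, so one cannot quote Lemma \ref{lem:prh-bound} verbatim; instead I would interpolate its two stated inequalities to obtain $\|(I-P_h)\psi\|_{H^s(\Omega)}\le Ch^{q+2-s}\|\psi\|_{\dH{q+2}}$ for $s\in\{0,1\}$, and then use Theorem \ref{thm:homogreg} with $\ell=0$, $p=q+2$ to get $\|u(t)\|_{\dH{q+2}}\le Ct^{-\al}\|v\|_{\dH q}$. This again produces the rate $h^{\min(q,0)+2}t^{-\al(1-\max(q/2,0))}\|v\|_{\dH q}$, because now $\min(q,0)=q$ and $\max(q/2,0)=0$. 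In either case the projection error is at worst of the target order (in fact smaller, by the logarithmic factor).

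Step (ii) is then immediate: taking $p=0$ and $p=1$ in Lemma \ref{lem:vtht} gives $\|\vtht(t)\|_{L^2(\Omega)}+h\|\nabla\vtht(t)\|_{L^2(\Omega)}\le Ch^{\min(q,0)+2}\ell_h\,t^{-\al(1-\max(q/2,0))}\|v\|_{\dH q}$, and the triangle inequality applied to \eqref{eqn:splitnonsmooth} combines this with the bound from step (i); since $\ell_h=|\ln h|\ge1$ for small $h$, the $\ell_h$ factor absorbs the projection term and the stated estimate follows. I do not expect a serious obstacle in this theorem as such: the substantive analysis --- deriving the time-singular estimate for $\vtht$, in particular using the inverse inequality \eqref{eqn:inverse} to trade the loss of an $\ep$-power of $h$ for a factor $\ep^{-1}$ and then optimizing $\ep=1/\ell_h$ --- has already been carried out in Lemma \ref{lem:vtht}. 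The only place that calls for a little care is the projection-error estimate in the truly nonsmooth range $q<0$, where $u(t)\notin\dH 2$ and one must interpolate the bounds of Lemma \ref{lem:prh-bound} rather than invoke them directly.
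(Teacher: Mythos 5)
Your proposal is correct and follows essentially the same route as the paper: the splitting \eqref{eqn:splitnonsmooth}, the projection-error bound for $\rlh$ from Lemma \ref{lem:prh-bound} and Theorem \ref{thm:homogreg}, the bound for $\vtht$ from Lemma \ref{lem:vtht}, and the triangle inequality. In fact you are more careful than the paper's displayed $\rlh$ estimate in the range $-1<q<0$, where the paper passes through $\|u(t)\|_{\dH 2}$ even though Theorem \ref{thm:homogreg} with $p=2$ requires $p-q\le 2$; your use of $p=q+2$ together with an interpolated $P_h$ bound is the right way to reach the stated rate $h^{\min(q,0)+2}t^{-\al(1-\max(q/2,0))}$ there.
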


\subsection{Error estimates for the inhomogeneous problem}
Now we derive error estimates for the semidiscrete Galerkin approximations of the inhomogeneous problem
with $f\in L^{\infty}(0,T;\dH q)$, $-1<q\leq 0$, and $v\equiv0$, in both $L^2$ and $L^\infty$-norm in time.
To this end, we appeal again to the splitting \eqref{eqn:splitnonsmooth}. By Theorem
\ref{thm:inhomogreg} and Lemma \ref{lem:prh-bound}, the following estimate holds for $\rlh$:
\begin{equation*}
 \| \rlh(t) \|_{L^2(\Omega)} + h \|\nabla\rlh(t)\|_{L^2(\Omega)} \le Ch^{2+q-\epsilon} \|u(t)\|_{\dH {2+q-\epsilon}}
          \le C\epsilon^{-1}h^{2+q-\epsilon}\|f\|_{L^\infty(0,t;\dH q)}.
\end{equation*}
Now the choice $\ell_h= |\ln h|,\, \epsilon=1/\ell_h$, yields
\begin{equation}\label{eqn:Ph-bound}
 \| \rlh(t) \|_{L^2(\Omega)} + h \|\nabla\rlh(t)\|_{L^2(\Omega)}\le C\ell_hh^{2+q} \|f\|_{L^\infty(0,t;\dH q)}.
\end{equation}

Thus, it suffices to bound the term $\vtht$; see the lemma below.
\begin{lemma}\label{lem:vtht-f}
Let $\vtht(t)$ be defined by \eqref{eqn:vtht}, and $f\in L^\infty(0,T;\dH q)$, $-1<q\leq 0$. Then
with $\ell_h=|\ln h|$, there holds
\begin{equation*}
  \|\vtht(t)\|_{L^2(\Omega)}+h\|\nabla\vtht(t)\|_{L^2(\Omega)}\leq Ch^{2+q}\ell_h^2\|f\|_{L^\infty(0,t;\dH q)}.
\end{equation*}
\end{lemma}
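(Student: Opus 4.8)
The plan is to mimic the structure of the proof of Lemma \ref{lem:vtht}, replacing the use of the homogeneous regularity estimate (Theorem \ref{thm:homogreg}) with the inhomogeneous one (Theorem \ref{thm:inhomogreg}), and handling the extra convolution in time carefully to keep track of the logarithmic factors. Starting from the representation \eqref{eqn:vtht}, namely $\vtht(t) = -\int_0^t \Etilh(t-s)\Delta_h(R_hu - P_hu)(s)\,ds$, I would set $A = \Etilh(t-s)\Delta_h(R_hu-P_hu)(s)$ and, exactly as before, use Lemma \ref{lem:regh} (the case $p< q$ replaced by the small shift $\ep$) to bound $\|A\|_{\dH p} \le C(t-s)^{\ep\al/2 - 1}\tribar (R_hu - P_hu)(s)\tribar_{\dH{p+\ep}}$ for $p=0,1$. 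Then the inverse inequality \eqref{eqn:inverse} together with Lemma \ref{lem:prh-bound} gives $\tribar (R_hu - P_hu)(s)\tribar_{\dH{p+\ep}} \le Ch^{2+q-p-\ep}\|u(s)\|_{\dH{2+q}}$ — here I use $-1<q\le 0$, so $\min(q,0)=q$ and the relevant interpolation space is $\dH{2+q}$, which is where Theorem \ref{thm:inhomogreg} lives (with $\ep$ absorbed).

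The key difference from Lemma \ref{lem:vtht} is that $\|u(s)\|_{\dH{2+q}}$ is no longer a clean power of $s$: by Theorem \ref{thm:inhomogreg} (applied with its own $\epsilon$ equal to, say, $\ep$ again, or better yet applied at exponent $2+q$ exactly if one is willing to pay one more $\ep^{-1}$), we have a bound of the form $\|u(s)\|_{\dH{2+q}} \le C\ep^{-1}s^{\ep\al/2}\|f\|_{L^\infty(0,s;\dH q)}$, or more simply $\|u(s)\|_{\dH{2+q-\ep}}\le C\ep^{-1}s^{\ep\al/2}\|f\|_{L^\infty(0,t;\dH q)}$. Plugging this into \eqref{eqn:vtht} and integrating in $s$, the time integral becomes $\int_0^t (t-s)^{\ep\al/2-1} s^{\ep\al/2}\,ds = t^{\ep\al} B(\ep\al/2, 1+\ep\al/2)$, which is $O(\ep^{-1})$ for small $\ep$ (the Beta function blows up like $\Gamma(\ep\al/2)\sim 2/(\ep\al)$). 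Collecting powers of $h$: we get $h^{2+q-p-\ep}$ from the spatial estimate times a constant from $t^{\ep\al}$ (bounded on $[0,T]$), and $\ep^{-2}$ from the two factors of $\ep^{-1}$ (one from Theorem \ref{thm:inhomogreg}, one from the Beta function). Choosing $\ep = 1/\ell_h = 1/|\ln h|$ turns $h^{-\ep}$ into a constant and $\ep^{-2}$ into $\ell_h^2$, yielding $\|\vtht(t)\|_{\dH p}\le Ch^{2+q-p}\ell_h^2\|f\|_{L^\infty(0,t;\dH q)}$ for $p=0,1$, which is precisely the claim after writing out the $L^2$ and gradient norms via $\|\vtht\|_{L^2(\Omega)} = \tribar\vtht\tribar_{\dH 0}$ and $\|\nabla\vtht\|_{L^2(\Omega)} = \tribar\vtht\tribar_{\dH 1}$.

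The main obstacle will be bookkeeping the two independent sources of the logarithm so that exactly $\ell_h^2$ (and no more) appears: one factor comes from converting $h^{-\ep}$ to $O(1)$, but there are also two separate $\ep^{-1}$ blowups — one from the singular time convolution (the Beta-function factor) and one from the regularity estimate \eqref{eq:regeps} of Theorem \ref{thm:inhomogreg}. One must check these do not compound into $\ell_h^3$; the resolution is that the $h^{-\ep}\to O(1)$ conversion does not itself cost an $\ep^{-1}$, so only the two genuine $\ep^{-1}$ factors survive, giving $\ell_h^2$. A secondary technical point is justifying the interchange of the infinite sum (hidden in $\tribar\cdot\tribar$ and in $u(s)$) with the time integral, and making sure the $\ep$ used in the inverse-inequality step and the $\ep$ used in Theorem \ref{thm:inhomogreg} can be taken equal (or at least comparable) without trouble — this is routine since both are eventually set to $1/\ell_h$.
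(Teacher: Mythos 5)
Your proposal is correct and follows essentially the same route as the paper: the representation \eqref{eqn:vtht}, the smoothing estimate for $\Etilh$ with shift $\ep$, the inverse inequality \eqref{eqn:inverse} plus Lemma \ref{lem:prh-bound}, the regularity bound \eqref{eq:regeps}, and the choice $\ep=1/\ell_h$ with exactly the two $\ep^{-1}$ factors you identify producing $\ell_h^2$. The only cosmetic discrepancies are that the smoothing step should cite Lemma \ref{lem:reg-f} rather than Lemma \ref{lem:regh}, and the accumulated power of $h$ is $h^{2+q-p-2\ep}$ rather than $h^{2+q-p-\ep}$; neither affects the conclusion.
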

\begin{proof}
By \eqref{eqn:Duhamelh} and Lemma \ref{lem:reg-f},
we deduce that for $p=0,1$
\begin{equation*}
   \begin{aligned}
     \|\vtht(t)\|_{\dH p} &\leq \int_0^t \|\Etilh(t-s)\Delta_h(R_h u-P_hu)(s)\|_{\dH p} ds\\
      &\leq C\int_0^t (t-s)^{\epsilon\al/2 - 1} \tribar\Delta_h(R_h u -P_hu)(s)\tribar_{\dH {p-2+\epsilon}}ds\\
      & \leq C\int_0^t(t-s)^{\epsilon\al/2-1} \tribar R_h u(s) -P_hu(s)\tribar_{\dH {p+\epsilon}}ds.
    \end{aligned}
\end{equation*}
Further, using \eqref{eqn:inverse} and Lemma \ref{lem:prh-bound}, we deduce for $p=0,1$
\begin{equation*}
   \begin{aligned}
     \|\vtht(t)\|_{\dH p} & \leq C h^{-\epsilon} \int_0^t(t-s)^{\epsilon\al/2-1} \| R_h u(s) -P_hu(s)\|_{\dH p}ds \\
       &  \leq C h^{2+q-p-2\epsilon} \int_0^t(t-s)^{\epsilon\al/2-1} \| u(s) \|_{\dH {2+q-\epsilon}}ds.
   \end{aligned}
\end{equation*}
Now by \eqref{eq:regeps} and the choice $\ep=1/\ell_h$ we get for $p=0,1$:
\begin{equation*}
   \begin{aligned}
    \|\vtht(t)\|_{\dH p} & \leq C \epsilon^{-1} h^{2+q-p-2\epsilon}  \|f\|_{L^\infty(0,t;\dH q)}
    \int_0^t(t-s)^{\epsilon\al/2-1}t^{\epsilon \alpha/2}ds\\
  &\leq C \epsilon^{-2} h^{2+q-p-2\epsilon}\|f\|_{L^\infty(0,t;\dH q)}
   \le Ch^{2+q-p}\ell_h^2\|f\|_{L^\infty(0,t;\dH q)}.
   \end{aligned}
\end{equation*}
This completes the proof of the lemma.
\end{proof}
An inspection of the proof of Lemma \ref{lem:vtht-f} indicates that for $0<q <1$, one
can get rid of one factor $\ell_h$. Now we can state an error estimate in $L^\infty$-norm in time.
\begin{theorem}\label{thm:gal:linf}
Let $v\equiv0$, $f\in L^\infty(0,T;\dH q)$, $-1<q\leq0$, and $u$ and $u_h$ be the solutions
of \eqref{eqn:goveq} and \eqref{eqn:fem} with $f_h=P_hf$, respectively.
Then with $\ell_h =| \ln h|$ and $t > 0$, there holds
\begin{equation*}
 \| u_h(t) - u(t) \|_{L^2(\Omega)} +
  h\|\nabla(u_h(t) - u(t))\|_{L^2(\Omega)} \le Ch^{2+q} \ell_h^{2} \|f\|_{L^\infty(0,t;\dH q)}.
\end{equation*}
\end{theorem}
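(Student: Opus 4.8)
The plan is to combine the triangle inequality with the two ingredients already assembled, namely the bound on the Ritz--$L^2$ discrepancy $\rlh = P_hu-u$ and the bound on the discrete component $\vtht = u_h - P_hu$. First I would write $u_h(t)-u(t) = \vtht(t) + \rlh(t)$ exactly as in the splitting \eqref{eqn:splitnonsmooth}, so that
\begin{equation*}
  \|u_h(t)-u(t)\|_{L^2(\Omega)} + h\|\nabla(u_h(t)-u(t))\|_{L^2(\Omega)}
   \le \big(\|\vtht(t)\|_{L^2(\Omega)} + h\|\nabla\vtht(t)\|_{L^2(\Omega)}\big)
     + \big(\|\rlh(t)\|_{L^2(\Omega)} + h\|\nabla\rlh(t)\|_{L^2(\Omega)}\big).
\end{equation*}
For the $\rlh$-term I would invoke \eqref{eqn:Ph-bound}, which already gives $\|\rlh(t)\|_{L^2(\Omega)} + h\|\nabla\rlh(t)\|_{L^2(\Omega)} \le C\ell_h h^{2+q}\|f\|_{L^\infty(0,t;\dH q)}$; this in turn rests on Theorem~\ref{thm:inhomogreg} (to control $\|u(t)\|_{\dH{2+q-\ep}}$) and the approximation property of $P_h$ in Lemma~\ref{lem:prh-bound}, with the standard trade-off choice $\ep = 1/\ell_h$ absorbing the $\ep^{-1}$ factor into $\ell_h$.

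For the $\vtht$-term I would cite Lemma~\ref{lem:vtht-f} directly, which yields $\|\vtht(t)\|_{L^2(\Omega)} + h\|\nabla\vtht(t)\|_{L^2(\Omega)} \le Ch^{2+q}\ell_h^2\|f\|_{L^\infty(0,t;\dH q)}$. Adding the two estimates and noting that $\ell_h \le \ell_h^2$ for $h$ small (so the $\rlh$-contribution is dominated by the $\vtht$-contribution), I obtain exactly the claimed bound $\|u_h(t)-u(t)\|_{L^2(\Omega)} + h\|\nabla(u_h(t)-u(t))\|_{L^2(\Omega)} \le Ch^{2+q}\ell_h^2\|f\|_{L^\infty(0,t;\dH q)}$. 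The $L^\infty$-in-time character is automatic: every bound on the right is uniform in $t\in(0,T]$ up to the factor $t^{\ep\al/2}$, which is itself bounded by $T^{\ep\al/2}\le C$ for the relevant range of $\ep$, so no blow-up as $t\to 0^+$ occurs (this is the qualitative improvement over the homogeneous case, where a factor $t^{-\al(1-\max(q/2,0))}$ is unavoidable).

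Since all the analytical work has been done in Lemma~\ref{lem:vtht-f} and the preceding display \eqref{eqn:Ph-bound}, the proof itself is a one-line assembly and there is no real obstacle; the only point requiring a moment's care is bookkeeping of the logarithmic factors. The genuinely hard step — already discharged earlier — was proving Lemma~\ref{lem:vtht-f}, where one must handle the discrete inverse inequality \eqref{eqn:inverse} together with the fractionally-singular kernel $(t-s)^{\ep\al/2-1}$ of $\Etilh$ (Lemma~\ref{lem:reg-f}) and the nonsmooth regularity estimate \eqref{eq:regeps}, choosing $\ep$ so that the spatial rate loses only $h^{-2\ep}$ (harmless after $\ep=1/\ell_h$) while the time integral $\int_0^t (t-s)^{\ep\al/2-1}s^{\ep\al/2}\,ds$ stays bounded by $C\ep^{-1}T^{\ep\al}$, producing the two powers of $\ell_h$. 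With that in hand, the present theorem follows immediately.

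\begin{proof}
By the splitting \eqref{eqn:splitnonsmooth} and the triangle inequality,
\begin{equation*}
  \|u_h(t)-u(t)\|_{L^2(\Omega)} + h\|\nabla(u_h(t)-u(t))\|_{L^2(\Omega)}
   \le \big(\|\vtht(t)\|_{L^2(\Omega)} + h\|\nabla\vtht(t)\|_{L^2(\Omega)}\big)
     + \big(\|\rlh(t)\|_{L^2(\Omega)} + h\|\nabla\rlh(t)\|_{L^2(\Omega)}\big).
\end{equation*}
The second bracket is bounded by \eqref{eqn:Ph-bound}, i.e., by $C\ell_h h^{2+q}\|f\|_{L^\infty(0,t;\dH q)}$, while the first is bounded by Lemma \ref{lem:vtht-f}, i.e., by $Ch^{2+q}\ell_h^2\|f\|_{L^\infty(0,t;\dH q)}$. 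Since $\ell_h \le \ell_h^2$ for $h$ sufficiently small, adding the two estimates gives
\begin{equation*}
  \|u_h(t)-u(t)\|_{L^2(\Omega)} + h\|\nabla(u_h(t)-u(t))\|_{L^2(\Omega)}
   \le Ch^{2+q}\ell_h^2\|f\|_{L^\infty(0,t;\dH q)},
\end{equation*}
which is the desired estimate.
\end{proof}
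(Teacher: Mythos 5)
Your proof is correct and follows exactly the route the paper intends: the theorem is the immediate consequence of the splitting \eqref{eqn:splitnonsmooth}, the bound \eqref{eqn:Ph-bound} on $\rlh$, and Lemma \ref{lem:vtht-f} on $\vtht$, combined by the triangle inequality with the $\ell_h$ factor absorbed into $\ell_h^2$. Nothing further is needed.
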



Last, we derive an error estimate in $L^2$-norm in time. To this end, we need a discrete analogue
of Theorem \ref{thm:regeps2}, which follows from the identical proof.
\begin{lemma}\label{lem:reg-d-l2}
Let $u_h$ be the solution of \eqref{eqn:fem} with $v_h=0$. Then for arbitrary $p>-1$
\begin{equation*}
   \int_0^T\normh{ \PD u_h(t)}{p}^2 +\normh{ u_h(t) }{p+2}^2 \, dt \le  \int_0^T \normh {f_h(t)}{p}^2 dt.
\end{equation*}
\end{lemma}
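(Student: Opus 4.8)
The plan is to mimic the proof of Theorem~\ref{thm:regeps2} verbatim, but working on the finite element space $X_h$ with the discrete Laplacian $-\Delta_h$ in place of $-\Delta$. First I would expand $u_h(t)$ in the discrete eigenbasis $\{\fy_j^h\}_{j=1}^N$ using the Duhamel representation \eqref{eqn:Duhamelh} with $v_h=0$: writing $f_h(t)=\sum_{j=1}^N f_j^h(t)\fy_j^h$, the $j$-th coefficient of $u_h(t)$ is the convolution $\int_0^t \bar E_{\vecal}^{h,j}(t-\tau)f_j^h(\tau)\,d\tau$, where $\bar E_{\vecal}^{h,j}(t)=t^{\al-1}E_{\vecal,\al}(-\la_j^h t^\al,-b_1t^{\al-\al_1},\ldots,-b_mt^{\al-\al_m})$ is the kernel from \eqref{eqn:Ehbar}. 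The key observation is that Lemma~\ref{lem:MMLcm} applies unchanged with $\la_j$ replaced by the discrete eigenvalue $\la_j^h>0$ (the proof in Appendix~\ref{app:MMLcm} only uses positivity of the argument), so $\bar E_{\vecal}^{h,j}$ is completely monotone and $\int_0^T |\bar E_{\vecal}^{h,j}(t)|\,dt<1/\la_j^h$.

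Next I would apply Young's convolution inequality $\|k\ast g\|_{L^2(0,T)}\le \|k\|_{L^1(0,T)}\|g\|_{L^2(0,T)}$ to each mode, obtaining
\begin{equation*}
  \left\|\int_0^t \bar E_{\vecal}^{h,j}(t-\tau)f_j^h(\tau)\,d\tau\right\|_{L^2(0,T)}^2
    \le \frac{1}{(\la_j^h)^2}\int_0^T |f_j^h(t)|^2\,dt.
\end{equation*}
Then summing against $(\la_j^h)^{p+2}$ and using the definition of $\normh{\cdot}{p+2}$ gives
\begin{equation*}
  \int_0^T \normh{u_h(t)}{p+2}^2\,dt
    = \sum_{j=1}^N (\la_j^h)^{p+2}\left\|\int_0^t \bar E_{\vecal}^{h,j}(t-\tau)f_j^h(\tau)\,d\tau\right\|_{L^2(0,T)}^2
    \le \sum_{j=1}^N (\la_j^h)^p \int_0^T |f_j^h(t)|^2\,dt
    = \int_0^T \normh{f_h(t)}{p}^2\,dt.
\end{equation*}
For the $\PD u_h$ term I would use the semidiscrete equation \eqref{eqn:fem-operator}, which gives $\PD u_h = f_h + \Delta_h u_h$ modewise as $(\PD u_h)_j^h = f_j^h(t) - \la_j^h (u_h)_j^h(t)$; alternatively, note the $j$-th coefficient of $\PD u_h$ equals $\la_j^h$ times the convolution of $\bar E_{\vecal}^{h,j}$ with $f_j^h$ only after accounting for the identity $\PD \int_0^t \bar E_{\vecal}^{h,j}(t-\tau)f_j^h(\tau)\,d\tau$, so the cleaner route is to bound $\normh{\PD u_h}{p}$ directly via the kernel for $\PD\Etilh$ — but in fact the simplest argument is that $\PD u_h = f_h + \Delta_h u_h$, so $\normh{\PD u_h(t)}{p}\le \normh{f_h(t)}{p}+\normh{u_h(t)}{p+2}$, and integrating in $t$ together with the bound just derived closes the estimate (the constant can be absorbed, matching the statement's implicit $C=1$ by a slightly more careful modewise argument as in Theorem~\ref{thm:regeps2}).

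The only genuine obstacle is confirming that Lemma~\ref{lem:MMLcm} transfers to the discrete eigenvalues, i.e., that complete monotonicity and the $L^1$ bound hold with $\la_j^h$ in place of $\la_j$. Since $\{\la_j^h\}_{j=1}^N$ is a finite set of strictly positive reals (bounded away from zero uniformly in $h$, as used in Lemma~\ref{lem:reg-f}) and the proof of Lemma~\ref{lem:MMLcm} in Appendix~\ref{app:MMLcm} is stated for an arbitrary positive parameter, this is immediate — which is precisely why the theorem can be asserted to follow "from the identical proof." Everything else is a routine transcription of the continuous argument, replacing $\Omega$-norms by the discrete triple-bar norms and the sum $\sum_{j=1}^\infty$ by $\sum_{j=1}^N$.
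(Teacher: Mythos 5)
Your proposal is exactly the paper's intended argument: the paper gives no separate proof, stating only that the lemma ``follows from the identical proof'' of Theorem \ref{thm:regeps2}, and your modewise transcription with $\la_j^h$ in place of $\la_j$, the unchanged Lemma \ref{lem:MMLcm}, and Young's inequality is precisely that. The only caveat is that your triangle-inequality treatment of the $\PD u_h$ term (like the paper's own ``follows analogously'') yields the bound with a multiplicative constant rather than the literal constant $1$ in the displayed inequality, but this is a defect of the statement as written, not of your argument, and is harmless for its use in Theorem \ref{thm:gal:l2}.
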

\begin{theorem}\label{thm:gal:l2}
Let $v\equiv 0$, $f\in L^\infty(0,T;\dH q)$, $-1<q\leq 0$, and $u$ and $u_h$ be the
solutions of \eqref{eqn:goveq} and \eqref{eqn:fem} with
$f_h=P_hf$, respectively. Then
\begin{equation*}
 \| u_h - u \|_{L^2(0,T;L^2(\Omega))} + h\|\nabla(u_h- u)\|_{L^2(0,T;L^2(\Omega))} \le Ch^{2+q}  \|f\|_{L^2(0,T;\dH q)}.
\end{equation*}
\end{theorem}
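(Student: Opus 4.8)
The plan is to follow the same error splitting used for Theorems \ref{thm:gal:linf} and \ref{thm:SG-nonsmooth}, namely $u_h - u = (u_h - P_h u) + (P_h u - u) =: \vtht + \rlh$, and to estimate the two pieces separately, but now measuring everything in the $L^2(0,T;L^2(\Omega))$ norm. For the projection error $\rlh$, I would use Lemma \ref{lem:prh-bound} together with the improved regularity statement Theorem \ref{thm:regeps2}, which gives $u \in L^2(0,T;\dH{q+2})$ with $\|u\|_{L^2(0,T;\dH{q+2})} \le C\|f\|_{L^2(0,T;\dH q)}$. Since $q+2 \le 2$ when $q \le 0$, a bit of care is needed: for $-1 < q \le 0$ one interpolates the two bounds in Lemma \ref{lem:prh-bound} (the $\dH 1$ and $\dH 2$ estimates for $I-P_h$) to get $\|(I-P_h)\psi\|_{L^2(\Omega)} + h\|\nabla (I-P_h)\psi\|_{L^2(\Omega)} \le C h^{2+q}\|\psi\|_{\dH{2+q}}$ for $0 \le 2+q \le 2$. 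Applying this pointwise in $t$ to $\psi = u(t)$ and then taking the $L^2$ norm in time yields directly $\|\rlh\|_{L^2(0,T;L^2(\Omega))} + h\|\nabla\rlh\|_{L^2(0,T;L^2(\Omega))} \le C h^{2+q}\|u\|_{L^2(0,T;\dH{q+2})} \le C h^{2+q}\|f\|_{L^2(0,T;\dH q)}$. Crucially, no logarithmic factor appears, because we never split off an $\epsilon$ and never hit a borderline exponent.

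For $\vtht$, I would recall that it solves the discrete equation \eqref{eqn:vtht2}, $\PD\vtht - \Delta_h\vtht = -\Delta_h(R_hu - P_hu)$ with $\vtht(0)=0$, i.e.\ it is the semidiscrete solution with zero initial data and right-hand side $f_h^{\mathrm{eff}} := -\Delta_h(R_hu - P_hu)$. The key point is to invoke the discrete maximal-regularity estimate Lemma \ref{lem:reg-d-l2} with $p=0$ and $p=-1$: this gives $\|\vtht\|_{L^2(0,T;L^2(\Omega))} \le \|f_h^{\mathrm{eff}}\|_{L^2(0,T;\dH{-2})}$ and $\|\nabla\vtht\|_{L^2(0,T;L^2(\Omega))} = \|\vtht\|_{L^2(0,T;\dH 1)} \le \|f_h^{\mathrm{eff}}\|_{L^2(0,T;\dH{-1})}$. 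Now $\tribar \Delta_h(R_hu - P_hu)\tribar_{\dH{-2}} = \tribar R_hu - P_hu\tribar_{L^2(\Omega)}$ and $\tribar \Delta_h(R_hu - P_hu)\tribar_{\dH{-1}} = \tribar R_hu - P_hu\tribar_{\dH 1} = \|\nabla(R_hu - P_hu)\|_{L^2(\Omega)}$; on $X_h$ these discrete norms agree with the ordinary ones. So both terms reduce to bounding $\|R_hu - P_hu\|_{L^2(\Omega)} + h\|\nabla(R_hu - P_hu)\|_{L^2(\Omega)}$ in $L^2(0,T)$. By the triangle inequality $R_hu - P_hu = (R_hu - u) - (P_hu - u)$ and Lemma \ref{lem:prh-bound} (again interpolated to the space $\dH{q+2}$, $-1<q\le0$), this is bounded pointwise in $t$ by $C h^{2+q}\|u(t)\|_{\dH{q+2}}$; taking $L^2$ in time and using Theorem \ref{thm:regeps2} once more gives $\|\vtht\|_{L^2(0,T;L^2(\Omega))} + h\|\nabla\vtht\|_{L^2(0,T;L^2(\Omega))} \le C h^{2+q}\|f\|_{L^2(0,T;\dH q)}$. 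Combining the $\vtht$ and $\rlh$ bounds by the triangle inequality finishes the proof.

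The main obstacle I anticipate is justifying the reduction of the $\vtht$-estimate to the discrete maximal-$L^2$-regularity Lemma \ref{lem:reg-d-l2} at the negative-order index $p=-1$ (for the gradient bound), rather than estimating the Duhamel integral \eqref{eqn:vtht} directly via Lemma \ref{lem:reg-f} as in Lemma \ref{lem:vtht-f}. The direct approach forces the introduction of the regularization parameter $\epsilon$ and produces the spurious $\ell_h^2$ factor; the whole point here is that using Lemma \ref{lem:reg-d-l2} with the exact exponent $q+2$ (no $\epsilon$) is what removes the logarithm. One should double-check that Lemma \ref{lem:reg-d-l2} indeed holds for $p=-1$ — the statement says "arbitrary $p>-1$", so strictly $p=-1$ is excluded; however, since $\{\lambda_j^h\}$ are bounded below uniformly in $h$ (as used in Lemma \ref{lem:reg-f}), the bound at $p=-1$ follows from the one at $p=0$ up to a constant, or one simply works throughout with $p=0$ and absorbs the $\Delta_h$ differently. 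Beyond that caveat, every ingredient — the error splitting, Lemma \ref{lem:prh-bound}, Theorem \ref{thm:regeps2}, Lemma \ref{lem:reg-d-l2} — is already in place, so the argument is routine.
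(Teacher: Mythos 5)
Your proposal is correct and follows essentially the same route as the paper: the same splitting $u_h-u=\vtht+\rlh$, with $\rlh$ handled by Theorem \ref{thm:regeps2} and Lemma \ref{lem:prh-bound}, and $\vtht$ handled by the discrete maximal $L^2$-regularity of Lemma \ref{lem:reg-d-l2} applied to the right-hand side $-\Delta_h(R_hu-P_hu)$ together with the identity $\tribar \Delta_h\psi\tribar_{\dH {p-2}}=\tribar \psi\tribar_{\dH p}$. Your caveat about the index range in Lemma \ref{lem:reg-d-l2} is well taken but applies equally to the paper's own proof, which also invokes it at the indices $p-2\in\{-2,-1\}$; since the discrete norms and the proof of that lemma make sense for all real $p$, this is harmless (note only the small slip that to bound $\|\vtht\|_{L^2(\Omega)}$ you need the lemma at $p=-2$, not $p=0$).
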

\begin{proof}
We use the splitting \eqref{eqn:splitnonsmooth}.
By Theorem \ref{thm:regeps2} and Lemma \ref{lem:prh-bound}
\begin{equation*}
\begin{aligned}
    \| \rlh \|_{L^2(0,T;L^2(\Omega))} + h  \|\nabla \rlh\|_{L^2(0,T;L^2(\Omega))} &\le Ch^{2+q}  \|u\|_{L^2(0,T;\dH {2+q})}\\
                  & \le Ch^{2+q}  \|f\|_{L^2(0,T;\dH q)}.
\end{aligned}
\end{equation*}
By \eqref{eqn:Duhamelh}, \eqref{eqn:vtht2} and Lemmas \ref{lem:reg-d-l2} and \ref{lem:prh-bound}, we have for $p=0,\,1$:
\begin{equation*}
 \begin{split}
   \int_0^T \|\vtht(t)\|^2_{\dH p} dt & \le C\int_0^T \normh{\Delta_h (R_h u - P_h u)(t) }{p-2}^2 dt \\
                    & \le C\int_0^T \normh{(R_h u - P_h u)(t)}{p}^2 dt\\
                    & \le C h^{4+2q-2p} \| u(t) \|_{L^2(0,T;\dH {2+q})}^2\\
                    & \le C h^{4+2q-2p}\| f(t) \|_{L^2(0,T;\dH q)}^2.
 \end{split}
\end{equation*}
Combing the preceding two estimates yields the desired assertion.
\end{proof}

\section{A Fully Discrete Scheme}\label{sec:fulldis}
Now we describe a fully discrete scheme for problem \eqref{eqn:goveq} based on the finite difference
method introduced in \cite{LinXu:2007}. To discretize the time-fractional derivatives, we divide the
interval $[0,T]$ uniformly with a time step size $\tau=T/K$, $K\in\mathbb{N}$.
We use the following discretization:
\begin{equation}\label{eqn:difference}
\begin{aligned}
    \frac{\pa^\al u(x,t_{n+1})}{\partial t^\al} &= \frac{1}{\Gamma(1-\al)}
    \sum_{j=0}^{n} \int_{t_j}^{t_{j+1}}(t_{n+1}-s)^{-\al}\frac{\partial u(x,s)}{\partial s}\,ds \\
    &=  \frac{1}{\Gamma(1-\al)}
    \sum_{j=0}^{n} \frac{u(x,t_{j+1})-u(x,t_j)}{\tau} \int_{t_j}^{t_{j+1}}(t_{n+1}-s)^{-\al}\,ds +r_{\al,\tau}^{n+1}\\
    &= \frac{1}{\Gamma(2-\al)} \sum_{j=0}^{n} d_{\al,j} \frac{u(x,t_{n+1-j})-u(x,t_{n-j})}{\tau^\al}+ r_{\al,\tau}^{n+1},
\end{aligned}
\end{equation}
where $d_{\al,j}=(j+1)^{1-\al}-j^{1-\al}$ with $j=0,1,2,...,n$ and
$r_{\al,\tau}^{n+1}$ denotes the local truncation error, which is given by
\begin{equation*}\label{eqn:residue}
\begin{split}
   | r_{\al,\tau}^{n+1} | \le C \max_{0\le t \le T} |u_{tt}(x,t)|
   \sum_{j=1}^{n} \int_{t_j}^{t_{j+1}}\frac{2s-t_j-t_{j+1}}{({t_{n+1}-s})^{\al}}\,ds +O(\tau^2).
\end{split}
\end{equation*}
Lin and Xu \cite[Lemma 3.1]{LinXu:2007} showed that the truncation error
$r_{\alpha,\tau}^{n+1}$ can be bounded by
\begin{equation}\label{eqn:residueerror}
   | r_{\al,\tau}^{n+1} | \le C \max_{0\le t \le T} \left |u_{tt}(x,t) \right | \tau^{2-\al}.
\end{equation}
Then the multi-term fractional derivative $\PD u(t)$ at $t=t_{n+1}$ in \eqref{eqn:goveq} can be discretized by
\begin{equation}\label{eqn:difference2}
    \PD u(t_{n+1}) = P_\tau (\bar \pa_t) u(t_{n+1})+R_\tau^{n+1},
\end{equation}
where the discrete differential operator $P_\tau(\bar\pa_t)$ is defined by
\begin{equation}\label{eqn:discdiff}
{\small
   P_\tau (\bar \pa_t) u(t_{n+1}) := \frac{1}{\Gamma(2-\al)} \sum_{j=0}^{n}  P_j
   \frac{u(x,t_{n+1-j})-u(x,t_{n-j})}{\tau^\al},}
\end{equation}
where the coefficients $\{P_j\}$ are defined by
$$P_j=d_{\al,j} + \sum_{i=1}^m\frac{\Gamma(2-\al)b_id_{\al_i,j}\tau^{\al-\al_i}}{\Gamma(2-\al_i)},
\quad j\in \mathbb{N}.
$$
Then by \eqref{eqn:residueerror} the local truncation error $R_\tau^{n+1}$ of the approximation
$P_\tau(\bar\partial_t)u(t_{n+1})$ is bounded by
\begin{equation}\label{eqn:residue2}
\begin{split}
   | R_\tau^{n+1} | \le C \max_{0\le t \le T} \left |u_{tt}(x,t) \right | 
   \left(\tau^{2-\al} + \sum_{i=1}^m b_i \tau^{2-\al_i}\right)\le C \tau^{2-\al}\max_{0\le t \le T}
    \left |u_{tt}(x,t) \right |. 
\end{split}
\end{equation}
By the monotonicity and convergence of $\{d_{\al,j}\}$ \cite[equation (13)]{LinXu:2007}, we know that
\begin{equation}\label{eqn:Pmono}
P_0 > P_1 >...>0 \quad \text{and} \quad P_j \rightarrow 0 \quad \text{for} \quad j \rightarrow \infty.
\end{equation}

Now we arrive at the following fully discrete scheme: find $U^{n+1}\in X_h$ such that
\begin{equation}\label{eqn:fully1}
    (P_\tau(\bar \pa_t)U^{n+1},\chi) + (\nabla U^{n+1},\nabla \chi) = (F^{n+1}, \chi)\quad \forall \chi\in X_h,
\end{equation}
where $F^{n+1}=f(x,t_{n+1})$. Upon setting $\gamma=\Gamma(2-\al)\tau^{\al}$, the fully discrete scheme
\eqref{eqn:fully1} is equivalent to finding $U^{n+1}\in X_h$ such that for all $\chi \in X_h$
\begin{equation}\label{eqn:fully}
    P_0(U^{n+1},\chi) + \gamma(\nabla U^{n+1}, \nabla \chi)
    =\sum_{j=0}^{n-1} (P_j-P_{j+1})(U^{n-j},\chi)+P_n (U^0 ,\chi) + \gamma(F^{n+1},\chi).
\end{equation}

The next result gives the stability of the fully discrete scheme.
\begin{lemma}\label{lem:fullystab}
The fully discrete scheme \eqref{eqn:fully} is unconditionally stable, i.e., for all $n\in \mathbb{N}$
\begin{equation}\label{eqn:fullystab}
\| U^n \|_{L^2(\Omega)} \le  \| U^0 \|_{L^2(\Omega)}+  c\max_{1\le j\le n} \|  F^{j}  \|_{L^2(\Omega)}.
\end{equation}
where the constant $c$ depends only on $\al$ and $T$.
\end{lemma}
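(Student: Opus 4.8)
The plan is a discrete energy estimate followed by a tailored induction. I would test \eqref{eqn:fully} with $\chi=U^{n+1}$ and abbreviate $a_n:=\|U^n\|_{L^2(\Omega)}$. Discarding the nonnegative term $\gamma(\nabla U^{n+1},\nabla U^{n+1})$, bounding the right-hand side by the Cauchy--Schwarz inequality, and using that by \eqref{eqn:Pmono} each weight $P_j-P_{j+1}$ and $P_n$ is strictly positive, division by $a_{n+1}$ (the case $a_{n+1}=0$ being trivial) yields the scalar recursion
\begin{equation*}
  P_0\,a_{n+1}\le\sum_{j=0}^{n-1}(P_j-P_{j+1})\,a_{n-j}+P_n\,a_0+\gamma\,\|F^{n+1}\|_{L^2(\Omega)}.
\end{equation*}
The structural ingredient is the telescoping identity $\sum_{j=0}^{n-1}(P_j-P_{j+1})+P_n=P_0$, i.e. the weights acting on the history of the scheme sum exactly to $P_0$.

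Next I would prove by induction on $n\ge 0$ the estimate $a_n\le a_0+c\,b_n$, where $b_n:=\max_{1\le j\le n}\|F^j\|_{L^2(\Omega)}$ (nondecreasing in $n$, with $b_0:=0$) and $c:=\Gamma(1-\al)\,T^\al$; the case $n=0$ is trivial. Assuming $a_k\le a_0+c\,b_{n+1}$ for all $k\le n$ and substituting into the recursion, the telescoping identity collapses the right-hand side to $P_0a_0+\big((P_0-P_n)c+\gamma\big)b_{n+1}$, so $a_{n+1}\le a_0+c\,b_{n+1}$ as soon as $c\ge\gamma/P_n$. To close this I would use $P_n\ge d_{\al,n}=(n+1)^{1-\al}-n^{1-\al}\ge(1-\al)(n+1)^{-\al}$ (mean value theorem, since $x\mapsto x^{1-\al}$ has decreasing derivative) together with $\gamma=\Gamma(2-\al)\tau^\al=(1-\al)\Gamma(1-\al)\tau^\al$, whence $\gamma/P_n\le\Gamma(1-\al)\big((n+1)\tau\big)^\al=\Gamma(1-\al)\,t_{n+1}^\al\le\Gamma(1-\al)\,T^\al=c$, using $t_{n+1}\le K\tau=T$. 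Note that the resulting $c$ depends only on $\al$ and $T$, as claimed.

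The only genuine difficulty is this $\tau$-independence. Iterating the crude bound $a_{n+1}\le\max_{k\le n}a_k+(\gamma/P_0)\|F^{n+1}\|_{L^2(\Omega)}$ over the $\mathcal O(1/\tau)$ time levels produces a constant of order $n\gamma=\mathcal O(\tau^{\al-1})$, which diverges as $\tau\to0$. The induction above circumvents this by carrying the full forcing term $c\,b_{n+1}$ in the ansatz, so that the per-step loss is merely the factor $(P_0-P_n)/P_0<1$; the deficit $P_n/P_0$ is then exactly absorbed because the lower bound $P_n\ge(1-\al)(n+1)^{-\al}$ makes $\gamma/P_n\le\Gamma(1-\al)\,t_{n+1}^\al$, a quantity bounded uniformly in $\tau$ by $\Gamma(1-\al)T^\al$. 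Everything else — the energy identity, discarding the Dirichlet form, Cauchy--Schwarz, and the telescoping sum — is routine.
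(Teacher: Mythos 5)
Your argument is correct and is essentially the paper's own proof: test \eqref{eqn:fully} with $\chi=U^{n+1}$, drop the Dirichlet form, apply Cauchy--Schwarz using the positivity and telescoping of the weights $P_j-P_{j+1}$, and close an induction with the ansatz $\|U^n\|_{L^2(\Omega)}\le\|U^0\|_{L^2(\Omega)}+c\max_j\|F^j\|_{L^2(\Omega)}$ and the identical constant $c=\Gamma(2-\alpha)T^\alpha/(1-\alpha)=\Gamma(1-\alpha)T^\alpha$. The one (minor) divergence is the verification that $cP_n\ge\gamma$: the paper reduces to $P_N$ by monotonicity and then estimates $(N+1)^{1-\alpha}-N^{1-\alpha}$ by concavity, whereas you lower-bound $P_n\ge d_{\alpha,n}\ge(1-\alpha)(n+1)^{-\alpha}$ directly by the mean value theorem, which is if anything the cleaner way to get the needed inequality.
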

\begin{proof}
The case $n=1$ is trivial. Then the proof proceeds by mathematical induction. By noting
the monotone decreasing property of the sequence $\{P_n\}$  from \eqref{eqn:Pmono} and choosing $\chi=U^{n+1}$ in
\eqref{eqn:fully}, we deduce
\begin{equation*}
\begin{split}
   P_0 \| U^{n+1}\|_{L^2(\Omega)} &\le \sum_{j=0}^{n-1} (P_j-P_{j+1})  \| U^{n-j} \|_{L^2(\Omega)}
   +  P_n \|  U^0 \|_{L^2(\Omega)}
   +  \gamma \|  F^{n+1}  \|_{L^2(\Omega)} \\
   &\le \sum_{j=0}^{n-1} (P_j-P_{j+1})  \| U^{n-j} \|_{L^2(\Omega)}
   +  P_n \|  U^0 \|_{L^2(\Omega)}
   +  \gamma \max_{1\le j\le n+1} \|  F^{j}  \|_{L^2(\Omega)} \\
   &\le P_0 \| U^0 \|_{L^2(\Omega)}+ \left(c(P_0-P_n)+\gamma\right)\max_{1\le j\le n+1} \|  F^{j}  \|_{L^2(\Omega)} \\
\end{split}
\end{equation*}
Using the monotonicity of $\{P_n\}$ again gives
\begin{equation*}
  c(P_0-P_n)+\gamma \le cP_0-(c P_{N} - \gamma).
\end{equation*}
It suffices to choose a constant $c$ such that $c P_N - \gamma>0$. By taking
$\tau=T/N$, we get
\begin{equation*}
\begin{split}
P_N &= (N+1)^{1-\al}-N^{1-\al}=((T+\tau)^{1-\al}-T^{1-\al})\tau^{\al-1}
    \leq (1-\al)T^{-\al}\tau^{\al} 
\end{split}
\end{equation*}
upon noting the concavity of the function $g(\tau)=(T+\tau)^{1-\alpha}$.
Then by choosing $c=\Gamma(2-\al)T^\al/(1-\al)$ we obtain
\begin{equation*}
\begin{split}
   P_0 \| U^{n+1}\|_{L^2(\Omega)}
   \le P_0 \| U^0 \|_{L^2(\Omega)}+ cP_0\max_{1\le j\le n+1} \|  F^{j}  \|_{L^2(\Omega)}. \\
\end{split}
\end{equation*}
The desired result follows by dividing both sides by $P_0$.
\end{proof}

Next we state an error estimate for the fully discrete scheme.
In order to analyze the temporal discretization error, we
assume the solution is sufficiently smooth.
\begin{theorem}\label{thm:estfull}
Let the solution $u$ be sufficiently smooth, and $\{U^n\}\subset X_h$ be the solution of
the fully discrete scheme \eqref{eqn:fully} with $U^0$ such that
\begin{equation*}
\| U^0-v\|_{L^2(\Omega)} \le Ch^2\| v\|_{\dH 2}.
\end{equation*}
Then there holds
\begin{equation*} 
\begin{split}
\| u(t_n)-U^n\|_{L^2(\Omega)} \le  C \bigg(h^2 (\|v\|_{H^2(\Omega)}  +\| f \|_{L^\infty(0,T;L^2(\Omega))} & +
\max_{0< t\le t_n} \| u_t \|_{\dH 2} )\\
&  +\tau^{2-\al} \max_{0< t\le t_n}\| u_{tt}(t)\|_{L^2(\Omega)} \bigg).
\end{split}
\end{equation*}
\end{theorem}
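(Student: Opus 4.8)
The plan is to split the error $u(t_n)-U^n$ via the Ritz projection, writing $u(t_n)-U^n = (u(t_n)-R_h u(t_n)) + (R_h u(t_n) - U^n) =: \varrho(t_n) + \theta^n$. The term $\varrho(t_n)$ is the standard Ritz projection error, controlled by Lemma \ref{lem:prh-bound}: $\|\varrho(t_n)\|_{L^2(\Omega)}\le Ch^2\|u(t_n)\|_{\dH 2}\le Ch^2\max_{0<t\le t_n}\|u_t\|_{\dH 2}$ after bounding $\|u(t_n)\|_{\dH 2}$ in terms of the data and $u_t$ (using $u(t_n)=v+\int_0^{t_n}u_t\,ds$ or a direct regularity bound). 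So the heart of the matter is the estimate for $\theta^n$, which lives entirely in $X_h$ and can be analyzed with the discrete stability result Lemma \ref{lem:fullystab}.

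Next I would derive the equation satisfied by $\theta^n$. Testing the continuous equation at $t=t_{n+1}$ against $\chi\in X_h$, replacing $\PD u(t_{n+1})$ by $P_\tau(\bar\partial_t)u(t_{n+1})$ up to the truncation error $R_\tau^{n+1}$ from \eqref{eqn:difference2}–\eqref{eqn:residue2}, using $a(u,\chi)=a(R_hu,\chi)$ from the definition of $R_h$, and subtracting the fully discrete scheme \eqref{eqn:fully1}, one obtains
\begin{equation*}
   (P_\tau(\bar\partial_t)\theta^{n+1},\chi) + (\nabla\theta^{n+1},\nabla\chi) = (w^{n+1},\chi)\quad\forall\chi\in X_h,
\end{equation*}
where the "forcing" $w^{n+1}$ collects three contributions: (i) the consistency error $R_\tau^{n+1}$ of the fractional difference quotient, bounded by $C\tau^{2-\al}\max_{0<t\le t_n}\|u_{tt}\|_{L^2(\Omega)}$ via \eqref{eqn:residue2}; (ii) the term $P_\tau(\bar\partial_t)(R_hu-u)(t_{n+1})=P_\tau(\bar\partial_t)(\varrho)(t_{n+1})$ coming from commuting $R_h$ past the discrete time operator, which by the definition \eqref{eqn:discdiff} is a telescoping-type combination of the values $\varrho(t_j)$; and (iii) if one replaces $P_\tau(\bar\partial_t)u$ by $P_\tau(\bar\partial_t)R_hu$ there is also the piece involving $\partial_t\varrho = R_h u_t - u_t$. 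The cleanest route is to write $w^{n+1}= -R_\tau^{n+1} - P_\tau(\bar\partial_t)\varrho(t_{n+1})$ and then bound $P_\tau(\bar\partial_t)\varrho(t_{n+1})$ by recognizing the discrete operator as an approximation of $\PD\varrho$, so that $\|P_\tau(\bar\partial_t)\varrho(t_{n+1})\|_{L^2(\Omega)}\le C\max_{0\le t\le t_{n+1}}\|\PD\varrho(t)\|$ or, more elementarily, by summation by parts using $\sum_j P_j$ bounded (from \eqref{eqn:Pmono}) and $\|\partial_t\varrho\|_{L^2(\Omega)}\le Ch^2\|u_t\|_{\dH 2}$ from Lemma \ref{lem:prh-bound}.

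Then I would apply Lemma \ref{lem:fullystab} directly to the $\theta$-equation, which has the same algebraic form as \eqref{eqn:fully1} with right-hand side $w^{n+1}$ and initial value $\theta^0 = R_h u(0) - U^0 = R_h v - U^0$. Stability gives
\begin{equation*}
  \|\theta^n\|_{L^2(\Omega)} \le \|\theta^0\|_{L^2(\Omega)} + c\max_{1\le j\le n}\|w^j\|_{L^2(\Omega)}.
\end{equation*}
For the initial term, $\|\theta^0\|_{L^2(\Omega)}\le \|R_hv - v\|_{L^2(\Omega)} + \|v-U^0\|_{L^2(\Omega)}\le Ch^2\|v\|_{\dH 2}$ by Lemma \ref{lem:prh-bound} and the hypothesis on $U^0$. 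For the forcing, collecting the bounds above gives $\max_j\|w^j\|_{L^2(\Omega)}\le C\tau^{2-\al}\max_{0<t\le t_n}\|u_{tt}\|_{L^2(\Omega)} + Ch^2\max_{0<t\le t_n}\|u_t\|_{\dH 2}$. Adding the $\varrho$ bound (which also supplies the $\|f\|_{L^\infty(0,T;L^2(\Omega))}$ contribution once $\|u(t_n)\|_{\dH 2}$ is estimated from the equation $-\Delta u = f - \PD u$ together with a bound on $\PD u$) and using the triangle inequality yields the claimed estimate. The main obstacle is step (ii)/(iii): making the bound on the discrete-time derivative of the projection error $P_\tau(\bar\partial_t)\varrho$ rigorous and $\tau$-uniform. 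The key facts that make it work are the positivity and monotone decay \eqref{eqn:Pmono} of the coefficients $\{P_j\}$ together with $\sum_{j\ge0}(P_j-P_{j+1})=P_0<\infty$, which let the telescoped sum be controlled by $\max_t\|\partial_t\varrho\|_{L^2(\Omega)}\le Ch^2\max_t\|u_t\|_{\dH 2}$; everything else is routine given Lemmas \ref{lem:prh-bound} and \ref{lem:fullystab} and the truncation bound \eqref{eqn:residue2}.
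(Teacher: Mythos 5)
Your proposal is correct and follows essentially the same route as the paper: the Ritz-projection splitting $e^n=\varrho^n+\theta^n$, the error equation for $\theta^n$ with forcing $\omega^n=-P_\tau(\bar\partial_t)\varrho-R_\tau^n$, the truncation bound \eqref{eqn:residue2}, and Lemma \ref{lem:fullystab} applied to the $\theta$-equation. The only point where you hedge, the $\tau$-uniform bound on $P_\tau(\bar\partial_t)\varrho$, is handled in the paper by writing the discrete operator back in its integral-kernel form, $\sum_j\frac{\varrho(t_{j+1})-\varrho(t_j)}{\tau}\int_{t_j}^{t_{j+1}}(t_n-s)^{-\al}+\sum_i b_i(t_n-s)^{-\al_i}\,ds$, so that $\|\varrho_t\|_{L^2(\Omega)}\le Ch^2\|u_t\|_{\dH 2}$ and the convergent kernel integral $\int_0^{t_n}(t_n-s)^{-\al}\,ds$ give the $Ch^2\max_t\|u_t\|_{\dH 2}$ bound directly (note that $\sum_j P_j$ itself is not bounded; it is $\tau^{1-\al}\sum_j P_j$ that is).
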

\begin{proof}
We split the error $e^n=u(t_n)-U^n$ into
\begin{equation*}
  e^n = (u(t_n)-R_h u(t_n))+(R_h u(t_n)- U^n)=: \varrho^n+\theta^n.
\end{equation*}
The term $\varrho^n$ can be bounded by
$$\| u(t_n)-R_h u(t_n) \|_{L^2(\Omega)}\le Ch^2\| u(t_n)\|_{\dH 2}
\le Ch^2(\|v\|_{H^2(\Omega)}+\| f \|_{L^\infty(0,T;L^2(\Omega))}).$$
It suffices to bound the term $\theta^n$.
By comparing \eqref{eqn:goveq} and \eqref{eqn:fully1}, we have the error equation
\begin{equation}\label{eqn:erroreq}
  (P_\tau(\bar \pa_t)\theta^n,\chi) + (\nabla\theta^n,\nabla\chi)=(\omega^n,\chi),
\end{equation}
where the right hand side $\omega^n$ is given by
\begin{equation*}
  \omega^n = R_h P_\tau(\bar \pa_t)u(t_n)-\PD u(t_n)= -P_\tau(\pa_t)\varrho(t_n)-R_\tau^n
  :=\omega_1^n+\omega_2^n,
\end{equation*}
where the truncation error $R_\tau^n$ is defined in \eqref{eqn:difference2}. Using the identity
\begin{equation*}
\varrho(x,t_{j+1})-\varrho(x,t_{j})=\int_{t_{j}}^{t_{j+1}} \varrho_t(x,t) \,dt,
\end{equation*}
we can bound the term $\omega_1^n$ by
\begin{equation*}
\begin{split}
    \| \omega_1^n\|_{L^2(\Omega)}
    &\le C \bigg|\hspace{-0.6mm}\bigg| \sum_{j=0}^{n-1} \frac{\varrho(t_{j+1})-\varrho(t_j)}{\tau} \int_{t_j}^{t_{j+1}}(t_{n}-s)^{-\al}+\sum_{i=1}^m b_i (t_{n}-s)^{-\al_i}\,ds\bigg|\hspace{-0.6mm}\bigg|_{L^2(\Omega)}\\
    &\leq C\sum_{j=0}^{n-1} \tau^{-1} \int^{t_{j+1}}_{t_{j}} \| \varrho_t(t) \|_{L^2(\Omega)} \,dt\int_{t_j}^{t_{j+1}}(t_{n}-s)^{-\al}+\sum_{i=1}^m b_i (t_{n}-s)^{-\al_i}\,ds\\
    &\leq C h^2\max_{0< t\le t_n} \| u_t \|_{\dH 2}
    \left(  \int_0^{t_n}(t_{n}-s)^{-\al}+\sum_{i=1}^m b_i (t_{n}-s)^{-\al_i}\,ds\right)\\
    &\le C h^2\max_{0< t\le t_n} \| u_t \|_{\dH 2}.
\end{split}
\end{equation*}
Meanwhile, the second term $\omega_2^n$ can be bounded using \eqref{eqn:residue2}. Then by the stability from
Lemma \ref{lem:fullystab} for the error equation \eqref{eqn:erroreq}, we obtain
\begin{equation*}
\begin{aligned}
  \| \theta^n  \|_{L^2(\Omega)} & \le C\bigg( \| \theta^0  \|_{L^2(\Omega)} + \max_{1\le j\le n} \| \omega_1^j \|_{L^2(\Omega)}
  + \max_{1\le j\le n} \| \omega_2^j \|_{L^2(\Omega)}\bigg)\\
      & \le C \bigg(h^2\| v \|_{\dH 2}
     + h^2 \max_{0< t\le t_n} \| u_t \|_{\dH 2} +
     \tau^{2-\al} \max_{0< t\le t_n} \|u_{tt}(t)\|_{L^2(\Omega)} \bigg).
\end{aligned}
\end{equation*}
\end{proof}

\begin{remark}
The error estimate in Theorem \ref{thm:estfull} holds only if
the solution $u$ is sufficiently smooth. There seems no known error estimate
expressed in terms of the initial data (and right hand side) only for
fully discrete schemes for nonsmooth initial data even for the
single-term time-fractional diffusion equation with a Caputo fractional derivative.
\end{remark}

\section{Numerical Experiments}\label{sec:numeric}
In this part we present one- and two-dimensional numerical experiments to verify
the error estimates in Sections \ref{sec:galerkin} and \ref{sec:fulldis}. We shall
discuss the cases of a homogeneous problem and an inhomogeneous problem separately.
\subsection{The case of a smooth solution}
Here we consider the following one-dimensional problem on the unit interval $\Omega=(0,1)$ with $0<\beta<\al<1$
\begin{equation}\label{1Dnum}
\begin{split}
       \partial_t^\al u(x,t) +  \partial_t^\beta u(x,t) -\partial_{xx}^2 u(x,t)&=f,  \quad 0< x <1 \quad 0\le t\le T,\\
       u(0,t)=u(1,t)&=0, \quad  0\le t \le T, \\
       u(x,0)&=v(x), \quad  0\le x \le 1.
\end{split}
\end{equation}
In order to verify the estimate in Theorem \ref{thm:estfull}, we first check the case
that the solution $u$ is sufficiently smooth. To this end, we set initial data $v$ to
$v(x)=x(1-x)$ and the source term $f$ to $f(x,t)=(2t^{2-\al}/{\Gamma(3-\al)}+2t^{2-\beta}
/{\Gamma(3-\beta)})(-x^2+x)+2(1+t^2)$. Then the exact solution $u$ is given by $u(x,t)=
(1+t^2)(-x^2+x)$, which is very smooth.

In our computation, we divide the unit interval $\Omega$ into $M$ equally spaced
subintervals, with a mesh size $h = 1/N$. Similarly, we fix the time step size at
$\tau=1/K$. Here we choose $N$ large enough so that the space discretization error is
negligible, and the time discretization error dominates. We measure the accuracy of
the numerical approximation $U^n$ by the normalized $L^2$ error $\| U^n-u(t_n) \|_{L^2(\Omega)}/
\| v \|_{L^2(\Omega)}$. In Table \ref{tab:smooth-sol-time}, we show the temporal
convergence rates, indicated in the column \texttt{rate} (the number in bracket is the
theoretical rate), for three different $\alpha$ values, which fully confirm the theoretical
result, cf. also Fig. \ref{fig:time_error} for the plot of the convergence rates.

\begin{table}[htb!]
\caption{Numerical results for the case with a smooth solution at $t=1$ with $\beta=0.2$
and $\al=0.25, 0.5, 0.95$, discretized on a uniform mesh with $h= 2^{-10}$ and
$\tau =0.2\times2^{-k}$.} \label{tab:smooth-sol-time}
\begin{center}
\begin{tabular}{@{}|c|c|ccccc|c|@{}}
     \hline
      $\al$ & $\tau$ & $1/10$ & $1/20$ & $1/40$ &$1/80$ & $1/160$ & rate\\
     \hline
     $\al=0.25$ & $L^2$-norm & 5.58e-4 &1.73e-4  &5.25e-5  &1.51e-5  &3.90e-6  & $\approx 1.81$ ($1.75$) \\
     \hline
     $\al=0.5$ & $L^2$-norm  & 1.45e-3 &5.11e-4  &1.78e-4  &6.17e-5  &2.08e-5 & $\approx 1.55$ ($1.50$)  \\
     \hline
     $\al=0.95$ & $L^2$-norm & 7.92e-3 &3.79e-3  &1.82e-3  &8.73e-4  &4.20e-4 & $\approx 1.06$ ($1.05$)\\
     \hline
     \end{tabular}
\end{center}
\end{table}
\vspace{-.6cm}
\begin{figure}[htb!]
\center
  \includegraphics[trim = 1cm .1cm 2cm 0.0cm, clip=true,width=10cm]{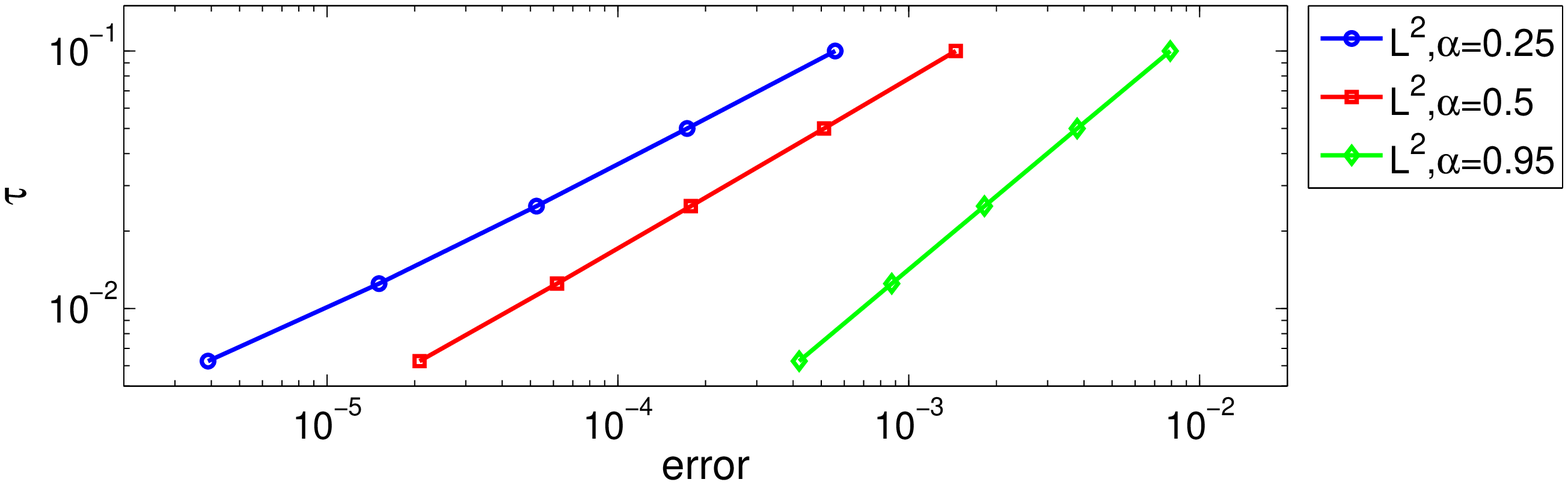}
  \caption{Numerical results for the case with a smooth solution at $t=1$ with $\beta=0.2$
and $\al=0.25, 0.5, 0.95$, discretized on a uniform mesh $h = 2^{-10}$ and $\tau =0.2\times2^{-k}$.}\label{fig:time_error}
\end{figure}
\subsection{Homogeneous problems}
In this part we present numerical results to illustrate the spatial convergence rates in Section
\ref{sec:galerkin}. We performed numerical tests on the following three different initial data:
\begin{itemize}
  \item[(2a)] Smooth data: $v(x)=\sin(2\pi x)$ which belongs to $H^2(\Omega)\cap H^1_0(\Omega)$.
  \item[(2b)] Nonsmooth data: $v(x)=\chi_{(0,1/2]}$ which lies in the space $\dH {\ep}$ for any $\ep\in [0,1/2)$.
  \item[(2c)] Very weak data: $v(x)=\delta_{1/2}(x)$, a Dirac $\delta_{1/2}(x)$-function
  concentrated at $x=1/2$, which belongs to the space $\dH {- \ep}$ for any $\ep\in (1/2,1]$.
\end{itemize}
In order to check the convergence rate of the semidiscrete scheme, we discretize the fractional
derivatives with a small time step $\tau$ so that the temporal discretization error is negligible.
In view of the possibly singular behavior as $t \rightarrow 0$, we set the time step $\tau$ to
$\tau=t/(5\times 10^4)$, with $t$ being the terminal time. For each example, we measure the error
$e(t)=u(t)-u_h(t)$ by the normalized errors $\| e(t) \|_{L^2(\Omega)}/ \| v \|_{L^2(\Omega)}$ and
$\| \nabla e(t) \|_{L^2(\Omega)}/ \| v \|_{L^2(\Omega)}$. The normalization enables us to observe
the behavior of the error with respect to time in case of nonsmooth initial data.
\subsubsection{Numerical results for example (2a): smooth initial data}
The numerical results show $O(h^2)$ and $O(h)$ convergence rates for the $L^2$- and $H^1$-norms
of the error, respectively, for all three different $\alpha$ values, cf. Fig. \ref{fig:spaceerror:smooth}.
As the value of $\alpha$ increases from $0.25$ to $0.95$, the error at $t=1$ decreases accordingly,
which resembles that for the single-term time-fractional diffusion equation \cite{JinLazarovZhou:2013}.
%
\begin{figure}[htb!]
\center
  \includegraphics[trim = 1cm .2cm 2cm 0.5cm, clip=true,width=10cm]{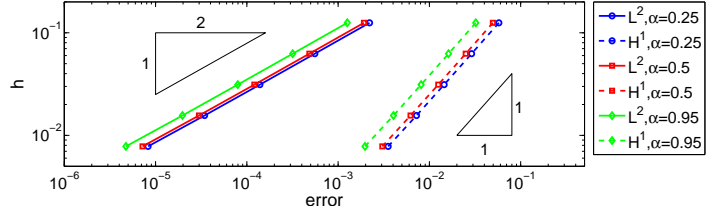}
  \caption{Numerical results for example (2a) at $t=1$ with $\beta=0.2$
and $\al=0.25$, $0.5$, $0.95$, discretized on a uniform mesh $h = 2^{-k}$ and $\tau=2\times10^{-5}$.}
\label{fig:spaceerror:smooth}
\end{figure}

\subsubsection{Numerical results for example (2b): nonsmooth initial data}
For nonsmooth initial data, we are particularly interested in errors for $t$ close to zero,
and thus we also present the errors at $t=0.01$ and $t=0.001$; see Table \ref{tab:nonsmooth1_initial}.
The numerical results fully confirm the theoretically
predicted rates for nonsmooth initial data. Further, in Table \ref{tab:check_singular}
we show the $L^2$-norm of the error for fixed $h = 2^{-6}$ and $t\rightarrow 0$. We observe
that the error deteriorates as $t\rightarrow0$. Upon noting $v\in \dH{1/2-\ep}$, it follows
from Theorem \ref{thm:SG-nonsmooth} that the error grows like $O(t^{-3\al/4})$, which agrees
well with the results in Table \ref{tab:check_singular}.

\begin{table}[htb!]
\caption{Numerical results for the nonsmooth case (2b) with $\al=0.5$ and $\beta=0.2$
at $t=1, 0.01, 0.001$,
discretized on a uniform mesh with $h = 2^{-k}$ and $\tau =t/(5\times10^4)$. }\label{tab:nonsmooth1_initial}
\begin{center}
\begin{tabular}{@{}|c|c|ccccc|c|@{}}
     \hline
      $t$ & $k$ & $3$ & $4$ & $5$ &$6$ & $7$ & rate\\
     \hline
     $t=1$ & $L^2$-norm  & 1.86e-3 &4.64e-4  &1.16e-4  &2.87e-5  &6.88e-6  & $\approx 2.02$ ($2.00$) \\
     & $H^1$-norm           & 4.89e-2 & 2.44e-2 & 1.22e-2 & 6.07e-3 & 2.96e-3 & $\approx 1.01$ ($1.00$)  \\
     \hline
     $t=0.01$ & $L^2$-norm  & 8.04e-3 &2.00e-3  &5.01e-4  &1.24e-4  &2.98e-5 & $\approx 2.03$ ($2.00$)  \\
     & $H^1$-norm          & 2.31e-2 & 1.16e-1 & 5.79e-2 & 2.88e-2 & 1.40e-2  & $\approx 1.01$ ($1.00$)  \\
     \hline
     $t=0.001$ & $L^2$-norm   & 1.65e-2 &4.14e-3  &1.03e-3  &2.56e-4  &6.18e-4 & $\approx 2.01$ ($2.00$)\\
     & $H^1$-norm           & 5.15e-1 & 2.58e-1 & 1.29e-2 & 6.41e-2 & 3.13e-2  & $\approx 1.01$ ($1.00$)\\
     \hline
     \end{tabular}
\end{center}
\end{table}

\begin{table}[htb!]
\caption{$L^2$-error with $\al=0.5$ and $h=2^{-6}$ for $t \to 0$ for
nonsmooth initial data (2b).}\label{tab:check_singular}
\center
     \begin{tabular}{|c|cccccc|c|}
     \hline
     $t$&   1e-3 & 1e-4 & 1e-5 & 1e-6 & 1e-7 & 1e-8  &rate\\
     \hline
     Case(2b)  & 2.56e-4 & 5.39e-4 & 1.15e-3 & 2.91e-3 & 6.77e-3 & 1.55e-2 & $\approx-0.37 (-0.37) $ \\
     \hline
     \end{tabular}
\end{table}

\subsubsection{Numerical results for example (2c): very weak initial data}
The numerical results show a superconvergence with a rate of $O(h^2)$ in the $L^2$-norm and $O(h)$ in the
$H^1$-norm, cf. Fig. \ref{fig:weak}(a). This is attributed to the fact that in one-dimension the solution
with the Dirac $\delta$-function as the initial data is smooth from both sides of the support point and
the finite element spaces $X_h$ have good approximation property. When the singularity point $x=1/2$ is
not aligned with the grid, Fig. \ref{fig:weak}(b) indicates an $O(h^{3/2})$ and $O(h^{1/2})$ convergence
rate for the $L^2$- and $H^1$-norm of the error, respectively, which agrees with our theory.
\begin{figure}[h!]
\center
\subfigure[$x=1/2$ aligns with the grid when $h=2^{-k}$]{
\includegraphics[trim = 1cm .1cm 2cm 0.5cm, clip=true,width=10cm]{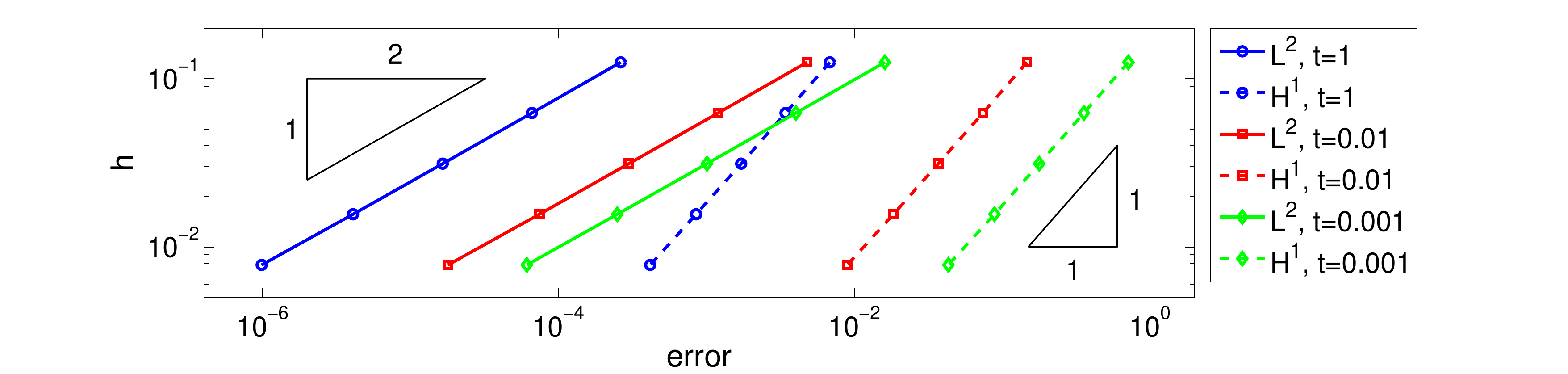}
}
\subfigure[$x=1/2$ does not align with the grid for $h=1/(2^k+1)$]{
\includegraphics[trim = 1cm .1cm 2cm 0.5cm, clip=true,width=10cm]{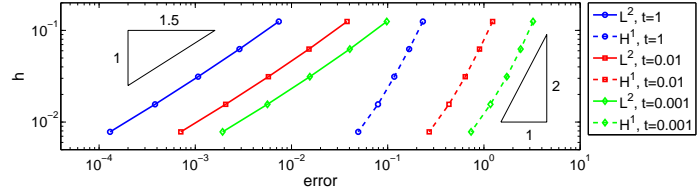}
}
  \caption{Numerical results for very weak initial data: $\al=0.5$, $\beta=0.2$ at $t=0.005$,
  $0.01$, $1.0$, uniform mesh in time with $\tau=t/(5\times 10^{4})$.}
\label{fig:weak}
\end{figure}

\subsection{Inhomogeneous problems}
Now we consider the inhomogeneous problem with $v\equiv0$ on the unit interval $\Omega=(0,1)$ and
test the following two examples:
\begin{enumerate}
 \item[(3a)]  Nonsmooth data: $f(x,t)=(\chi_{[1/2,1]}(t)+1)\chi_{[0,{1/2}]}(x)$.
  The jump at $x=1/2$ leads to
$f(t,\cdot) \notin \dot H^1(\Omega)$; nonetheless,
for any $\epsilon >0$, $f \in L^\infty(0,T;\dH {{1/2}-\epsilon})$.
\item[(3b)] Very weak data: $f(x,t)=(\chi_{[1/2,1]}(t)+1)\delta_{1/2}(x)$
where $f$ involves a Dirac $\delta_{1/2}(x)$-function
concentrated at $x=0.5$.
\end{enumerate}

\subsubsection{Numerical results for example (3a)}
Since the errors are bounded independently of the time, cf. Theorem \ref{thm:gal:linf},
we only present the errors in $L^\infty$ in time, i.e., $\|e(t)\|_{L^2(\Omega)}$ and $\| \nabla e(t)
\|_{L^2(\Omega)}$. In Table \ref{tab:nonsmooth1_right}, we present the $L^2$- and $H^1$-error
at $t=1$, $0.01$, and $0.001$. The numerical results agree well with our theoretical predictions,
i.e., $O(h^2)$ and $O(h)$ convergence rates for the $L^2$- and $H^1$-norms of the error, respectively.

\begin{table}[htb!]
\caption{Numerical results for example (3a) with $\al=0.5$ and $\beta=0.2$
at $t=1, 0.01, 0.001$,
discretized on a uniform mesh $h = 2^{-k}$ and $\tau=t/(5\times 10^{4})$. }\label{tab:nonsmooth1_right}
\begin{center}
\begin{tabular}{@{}|c|c|ccccc|c|@{}}
     \hline
      $t$ & $k$ & $3$ & $4$ & $5$ &$6$ & $7$ & rate\\
     \hline
     $t=1$ & $L^2$-norm  & 1.76e-3 &4.40e-4  &1.10e-4  &2.71e-5  &6.53e-6  & $\approx 2.01$ ($2.00$) \\
     & $H^1$-norm           & 4.72e-2 & 2.36e-2 & 1.18e-2 & 5.86e-3 & 2.86e-3 & $\approx 1.01$ ($1.00$)  \\
     \hline
     $t=0.01$ & $L^2$-norm  & 6.34e-4 &1.59e-4  &3.96e-5  &9.82e-6  &2.38e-6 & $\approx 2.01$ ($2.00$)  \\
     & $H^1$-norm          & 1.89e-2 & 9.46e-3 & 4.72e-3 & 2.35e-3 & 1.15e-3  & $\approx 1.01$ ($1.00$)  \\
     \hline
     $t=0.001$ & $L^2$-norm   & 4.55e-4 &1.15e-4  &2.88e-5  &1.15e-6  &1.73e-6 & $\approx 2.02$ ($2.00$)\\
     & $H^1$-norm           & 1.45e-2 & 7.31e-3 & 3.66e-3 & 1.82e-3 & 8.88e-4  & $\approx 1.01$ ($1.00$)\\
     \hline
     \end{tabular}
\end{center}
\end{table}
\subsubsection{Numerical results for example (3b)}
In Table \ref{tab:weak_right_G} we show convergence rates at three different times,
i.e., $t=1$, $0.01$, and $0.001$. Here the mesh size $h$ is chosen to be $h=1/(2^k+1)$, and
thus the support of the Dirac $\delta$-function does not align with the grid. The results indicate
an $O(h^{1/2})$ and $O(h^{3/2})$ convergence rate for the $H^1$- and $L^2$-norm of the error,
respectively, which agrees well with the theoretical prediction. If the Dirac $\delta$-function
is supported at a grid point, both $L^2$- and $H^1$-norms of the error exhibit a superconvergence
of one half order, cf. Table \ref{tab:weak_right_d}. This, however, theoretically remains to be established.

\begin{table}[htb!]
\caption{Numerical results for example (3b) with $\al=0.5$ and $\beta=0.2$
at $t=0.1, 0.01, 0.001$,
discretized on a uniform mesh $h = 1/(2^{k}+1)$ and $\tau=t/(5\times 10^{4})$. }\label{tab:weak_right_d}
\begin{center}
\begin{tabular}{@{}|c|c|ccccc|c|@{}}
     \hline
      $t$ & $k$ & $3$ & $4$ & $5$ &$6$ & $7$ & rate\\
     \hline
     $t=0.1$ & $L^2$-norm  & 1.02e-2 &4.01e-3  &1.49e-3  &5.35e-4  &1.82e-4  & $\approx 1.49$ ($1.50$) \\
     & $H^1$-norm           & 3.24e-1 & 2.35e-1 & 1.65e-1 & 1.11e-1 & 6.94e-2 & $\approx 0.50$ ($0.50$)  \\
     \hline
     $t=0.01$ & $L^2$-norm  & 4.66e-3 &1.91e-3  &7.29e-4  &2.64e-4  &9.02e-5 & $\approx 1.45$ ($1.50$)  \\
     & $H^1$-norm          & 1.54e-1 & 1.14e-1 & 8.16e-2 & 5.54e-2 & 3.47e-2  & $\approx 0.55$ ($0.50$)  \\
     \hline
     $t=0.001$ & $L^2$-norm   & 4.30e-3 &1.83e-3  &7.12e-4  &2.61e-4  &8.97e-5 & $\approx 1.45$ ($1.50$)\\
     & $H^1$-norm           & 1.47e-1 & 1.11e-1 & 8.05e-2 & 5.50e-2 & 3.45e-2  & $\approx 0.55$ ($0.50$)\\
     \hline
     \end{tabular}
\end{center}
\end{table}
\vspace{-.2cm}
\begin{table}[htb!]
\caption{Numerical results for example (3b) with $\al=0.5$ and $\beta=0.2$
at $t=1, 0.01, 0.001$,
discretized on a uniform mesh with $h= 2^{-k}$ and $\tau=t/(5\times 10^{4})$. }\label{tab:weak_right_G}
\begin{center}
\begin{tabular}{@{}|c|c|ccccc|c|@{}}
     \hline
      $t$ & $k$ & $3$ & $4$ & $5$ &$6$ & $7$ & rate\\
     \hline
     $t=1$ & $L^2$-norm  & 5.35e-4 &1.34e-4  &3.35e-5  &8.31e-6  &2.01e-6  & $\approx 2.01$ ($1.50$) \\
     & $H^1$-norm           & 1.49e-2 & 7.48e-3 & 3.74e-3 & 1.86e-3 & 9.07e-4 & $\approx 1.01$ ($0.50$)  \\
     \hline
     $t=0.01$ & $L^2$-norm  & 6.67e-4 &1.67e-4  &4.17e-5  &1.04e-5  &2.52e-6 & $\approx 2.03$ ($1.50$)  \\
     & $H^1$-norm          & 2.56e-2 & 1.29e-2 & 6.44e-3 & 3.20e-3 & 1.56e-3  & $\approx 1.02$ ($0.50$)  \\
     \hline
     $t=0.001$ & $L^2$-norm   & 8.19e-4 &2.08e-4  &5.22e-5  &1.30e-5  &3.19e-6 & $\approx 2.02$ ($1.50$)\\
     & $H^1$-norm           & 3.96e-2 & 2.00e-2 & 1.00e-3 & 4.98e-3 & 2.45e-3  & $\approx 1.01$ ($0.50$)\\
     \hline
     \end{tabular}
\end{center}
\end{table}

\subsection{Examples in two-dimension}
In this part, we present three two-dimensional examples on the unit square $\Omega=(0,1)^2$.
\begin{enumerate}
 \item[(4a)]  Nonsmooth initial data: $v=\chi_{(0,1/2)\times(0,1)}$ and $f\equiv0$.
 \item[(4b)] Very weak initial data: $v=\delta_\Gamma$ with $\Gamma$ being the boundary of the square
  $[1/4,3/4]^2$ and $\langle \delta_\Gamma,\phi\rangle=\int_\Gamma\phi(s)\,ds$.
  By H\"{o}lder's inequality and the continuity of the trace operator from $\dot H^{{1/2}+\epsilon}(\Omega)$
  to $L^2(\Gamma)$ \cite{AdamsFournier:2003}, we deduce $\delta_\Gamma \in H^{-1/2-\epsilon}(\Omega)$.
 \item[(4c)]  Nonsmooth right hand side: $f(x,t)=(\chi_{[1/20,1/10]}(t)+1)\chi_{(0,1/2)\times(0,1)}(x)$ and $v\equiv0$.
\end{enumerate}

To discretize the problem, we divide each direction into $N=2^k$ equally spaced
subintervals, with a mesh size $h=1/N$ so that the domain $[0,1]^2$ is divided into
$N^2$ small squares. We get a symmetric mesh by connecting the diagonal of each small square.

The numerical results for example (4a) are shown in Table \ref{tab:nonsmooth2D}, which agree well
with Theorem \ref{thm:SG-nonsmooth}, with a rate $O(h^2)$ and $O(h)$,
respectively, for the $L^2$- and $H^1$-norm of the error. Interestingly, for example (4b), both
the $L^2$-norm and $H^1$-norm of the error exhibit super-convergence, cf. Table \ref{tab:weak2D_G}.
The numerical results for example (4c) confirm the theoretical results; see Table \ref{tab:2Dnonsmooth_right}.
The solution profiles for examples (4b) and (4c) at $t=0.1$ are shown in Fig. \ref{fig:2D_solution},
from which the nonsmooth region of the solution can be clearly observed.

\begin{table}[hbt!]
\caption{Numerical results for (4a) with $\al=0.5$ and $\beta=0.2$
at $t=0.1, 0.01, 0.001$, discretized on a uniform mesh, $h = 2^{-k}$ and $\tau=t/10^{4}$. }\label{tab:nonsmooth2D}
\begin{center}
\begin{tabular}{@{}|c|c|ccccc|c|@{}}
     \hline
      $t$ & $k$ & $3$ & $4$ & $5$ &$6$ & $7$ & rate\\
     \hline
     $t=0.1$ & $L^2$-norm  & 5.25e-3 &1.35e-3  &3.38e-4  &8.24e-5  &1.98e-5  & $\approx 2.06$ ($2.00$) \\
     & $H^1$-norm           & 9.10e-2 & 4.53e-2 & 2.25e-2 & 1.09e-2 & 4.99e-3 & $\approx 1.04$ ($1.00$)  \\
     \hline
     $t=0.01$ & $L^2$-norm  & 1.25e-2 &3.23e-3  &8.09e-4  &1.97e-4  &4.65e-5 & $\approx 2.05$ ($2.00$)  \\
     & $H^1$-norm          & 2.18e-1 & 1.08e-1 & 5.35e-2 & 2.62e-2 & 1.27e-2  & $\approx 1.05$ ($1.00$)  \\
     \hline
     $t=0.001$ & $L^2$-norm   & 3.02e-2 &7.84e-3  &1.97e-3  &4.81e-4  &1.16e-4& $\approx 2.03$ ($2.00$)\\
     & $H^1$-norm           & 5.30e-1 & 2.64e-1 & 1.31e-1 & 6.38e-2 & 3.14e-2  & $\approx 1.04$ ($1.00$)\\
     \hline
     \end{tabular}
\end{center}
\end{table}

\vspace{-.6cm}
\begin{figure}[htb!]
  \includegraphics[trim = 1cm .1cm 2cm 0.0cm, clip=true,width=10cm]{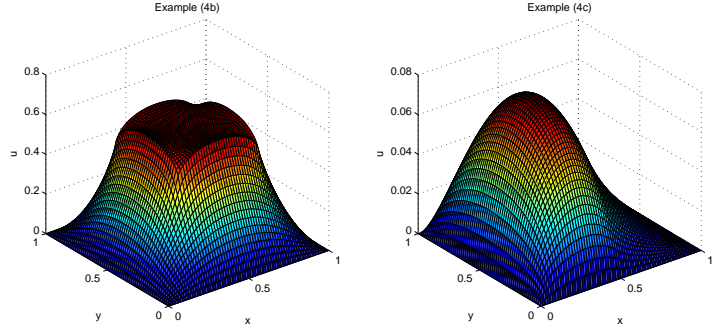}
  \caption{Numerical solutions of examples (4b) and (4c) with $h = 2^{-6}$,
  $\alpha=0.5$, $\beta=0.2$ at $t=0.1$}\label{fig:2D_solution}
\end{figure}

\begin{table}[hbt!]
\caption{Numerical results for example (4b) with $\al=0.5$ and $\beta=0.2$
at $t=0.1, 0.01, 0.001$ for a uniform mesh with
$h = 2^{-k}$ and $\tau=t/10^{4}$. }\label{tab:weak2D_G}
\begin{center}
\begin{tabular}{@{}|c|c|ccccc|c|@{}}
     \hline
      $t$ & $k$ & $3$ & $4$ & $5$ &$6$ & $7$ & rate\\
     \hline
     $t=0.1$ & $L^2$-norm  & 1.18e-2 &3.18e-3  &8.41e-4  &2.18e-4  &5.41e-5  & $\approx 1.92$ ($1.50$) \\
     & $H^1$-norm           & 2.25e-1 & 1.13e-1 & 6.60e-2 & 3.40e-2 & 1.66e-2 & $\approx 0.92$ ($0.50$)  \\
     \hline
     $t=0.01$ & $L^2$-norm  & 2.82e-2 &7.62e-3  &2.28e-3  &5.26e-4  &1.25e-4 & $\approx 1.95$ ($2.00$)  \\
     & $H^1$-norm          & 5.66e-1 & 3.09e-1 & 1.65e-1 & 8.52e-2 & 4.19e-2  & $\approx 0.94$ ($1.00$)  \\
     \hline
     $t=0.001$ & $L^2$-norm   & 6.65e-2 &1.83e-3  &4.98e-3  &1.33e-3  &3.30e-4& $\approx 1.91$ ($2.00$)\\
     & $H^1$-norm           & 1.66e0 & 8.93e-1 & 4.75e-1 & 2.43e-1 & 1.21e-1  & $\approx 0.95$ ($1.00$)\\
     \hline
     \end{tabular}
\end{center}
\end{table}

\begin{table}[hbt!]
\caption{Numerical results for example (4c) with $\al=0.5$ and $\beta=0.2$ at $t=0.1, 0.01, 0.001$
for a uniform mesh with $h= 2^{-k}$ and $\tau=t/10^{4}$. }\label{tab:2Dnonsmooth_right}
\begin{center}
\begin{tabular}{@{}|c|c|ccccc|c|@{}}
     \hline
      $t$ & $k$ & $3$ & $4$ & $5$ &$6$ & $7$ & rate\\
     \hline
     $t=0.1$ & $L^2$-norm  & 2.28e-3 &5.86e-4  &1.47e-4  &3.58e-5 & 7.91e-6  & $\approx 2.07$ ($2.00$) \\
     & $H^1$-norm           & 3.97e-2 & 1.97e-2 & 9.77e-3 & 4.76e-3 & 2.13e-3 & $\approx 1.06$ ($1.00$)  \\
     \hline
     $t=0.01$ & $L^2$-norm  & 1.06e-3 &2.73e-4  &6.86e-5  &1.67e-6  &3.70e-6 & $\approx 2.06$ ($2.00$)  \\
     & $H^1$-norm          & 1.85e-2 & 9.18e-3 & 4.56e-3 & 2.22e-3 & 9.94e-3  & $\approx 1.06$ ($1.00$)  \\
     \hline
     $t=0.001$ & $L^2$-norm   & 8.66e-4 &2.28e-4  &5.75e-5  &1.40e-6  &3.11e-6 & $\approx 2.04$ ($2.00$)\\
     & $H^1$-norm           & 1.56e-2 & 7.82e-3 & 3.88e-3 & 1.90e-3 & 8.47e-4  & $\approx 1.05$ ($1.00$)\\
     \hline
     \end{tabular}
\end{center}
\end{table}

\section{Concluding remarks}
In this work, we have developed a simple numerical scheme based on the Galerkin finite element method
for a multi-term time fractional diffusion equation which involves multiple Caputo fractional
derivatives in time. A complete error analysis of the space semidiscrete Galerkin scheme is provided. The
theory covers the practically very important case of nonsmooth initial data and right hand side.
The analysis relies essentially on some new regularity results of the multi-term time fractional
diffusion equation. Further, we have developed a fully discrete scheme based on a finite difference
discretization of the Caputo fractional derivatives. The stability and error estimate of the fully
discrete scheme were established, provided that the solution is smooth. The extensive
numerical experiments in one- and two-dimension fully confirmed our convergence analysis: the empirical
convergence rates agree well with the theoretical predictions for both smooth and nonsmooth data.

\section*{Acknowledgements}
The research of B. Jin has been supported by US NSF Grant DMS-1319052, R. Lazarov was supported in parts
by US NSF Grant DMS-1016525 and also by Award No. KUS-C1-016-04, made by King Abdullah University of Science and Technology (KAUST), and 
Y. Liu was supported by the Program for Leading Graduate Schools, MEXT, Japan.

\appendix
\section{Proof of Lemma \ref{lem:MMLcm}}\label{app:MMLcm}
\begin{proof}
First, we define an auxiliary function $v_j(t)$ by
\begin{equation*}
  \hat v_j(z)=\mathcal{L}(v_j)
  = \frac{z^{\al-1}+\sum_{k=1}^m b_k z^{\al_k-1}}{z^\al+\sum_{k=1}^m b_k z^{\al_k}+\la_j}.
\end{equation*}
Now by the property of the Laplace transform $f(0^+)=\lim_{z\to \infty} z \widehat{f}(z)$,
we obtain $v_j(0+)=1$.
The function $\bar E_{\vecal}^j(t)$ is the inverse Laplace integral of
$\widehat {\bar E_{\vecal}^j} =(z^\al+\sum_{k=1}^m b_k z^{\al_k}+\la_j)^{-1}$, i.e.
\begin{equation}\label{L1}
  \bar E_{\vecal}^j(t)=\frac{1}{2\pi\mathrm{i}} \int_{Br}e^{zt}\frac{1}{{z^\al+\sum_{k=1}^m b_k z^{\al_k}+\la_j}}\,dz,
\end{equation}
where $Br=\{z;\ \text{Re}~~z=\sigma,\ \sigma>0\}$ is the Bromwich path. The function
$\widehat{\bar E_{\vecal}^j}(z)$ has a branch point $0$, so we cut off the negative
part of the real axis. Note that the function $z^\al+\sum_{k=1}^m b_k z^{\al_k}+\la_j$
has no zero in the main sheet of the Riemann surface including its boundaries on the cut.
Indeed, if $z=\varrho e^{i\theta}$, with $\rho>0$, $\theta\in(-\pi,\pi)$, then
\begin{equation*}
   \Im\left\{z^\al+\sum_{k=1}^m b_k z^{\al_k}+\la_j\right\}=\varrho^\al\sin\al\theta+\sum_{k=1}^m b_k \varrho^{\al_k}\sin\al_k\theta\neq 0,\ \ \forall \theta\neq 0,
\end{equation*}
since $\sin \alpha \theta$ and $\sin\alpha_k\theta$ have the same sign for any $\theta\in(-\pi,\pi)$
and $b_k>0$. Hence, $ \bar E_{\vecal}^j(t)$ can be found by bending the Bromwich path
into the Hankel path $Ha(\ep)$, which starts from $-\infty$ along the lower side
of the negative real axis, encircles the disc $|s|=\ep$ counterclockwise and ends
at $-\infty$ along the upper side of the negative real axis.
Then by taking $\ep \to 0$, we obtain
\begin{equation*} 
  \bar E_{\vecal}^j(t)= \int_0^\infty e^{-rz}K_n(r)\,dr,
\end{equation*}
where
\begin{equation*} 
  K_n(r)=-\frac{1}{\pi}\Im\left\{\left.\frac{1}
  {z^\al+\sum_{k=1}^m b_k z^{\al_k}+\la_j}\right|_{z=r e^{i\pi}}\right\}.
\end{equation*}
It is easy to check
\begin{equation*}
 K_n(r)=\frac{1}{\pi} \frac{r^{\al}\sin\al\pi+\sum_{k=1}^m b_kr^{\al_k}\sin\al_k\pi}
 {(r^{\al}\cos\al\pi+\sum_{k=1}^m b_kr^{\al_k}\cos\al_k\pi+\la_j)^2
 +(r^{\al}\sin\al\pi+\sum_{k=1}^m b_kr^{\al_k}\sin\al_k\pi)^2}
\end{equation*}
which is greater than zero for all $r>0$. Therefore, $\bar E_{\vecal}^j(t)$
is completely monotone. A similar argument shows that $v_j(t)$ is also
completely monotone. Consequently,
\begin{equation*}
\int_0^T |\bar E_{\vecal}^j(t)|\,dt=\int_0^T \bar E_{\vecal}^j(t)\,dt=
-\frac{1}{\la_j}\int_0^T v'_j(t)\,dt=\frac{1}{\la_n}(1-v_j(T))<\frac{1}{\la_n},
\end{equation*}
which concludes the proof of the lemma.
\end{proof}

\bibliographystyle{abbrv}
\bibliography{frac}

\begin{thebibliography}{10}

\bibitem{AdamsGelhar:1992}
E.~E. Adams and L.~W. Gelhar.
\newblock Field study of dispersion in a heterogeneous aquifer: 2. spatial
  moments analysis.
\newblock {\em Water Res. Research}, 28(12):3293--3307, 1992.

\bibitem{AdamsFournier:2003}
R.~Adams and J.~Fournier.
\newblock {\em Sobolev {S}paces}.
\newblock Elsevier/Academic Press, Amsterdam, 2003.

\bibitem{BrunnerLingYamamoto:2010}
H.~Brunner, L.~Ling, and M.~Yamamoto.
\newblock Numerical simulations of {2D} fractional subdiffusion problems.
\newblock {\em J. Comput. Phys.}, 229(18):6613–--6622, 2010.

\bibitem{CasasClasonKunisch:2013}
E.~Casas, C.~Clason, and K.~Kunisch.
\newblock Parabolic control problems in measure spaces with sparse solutions.
\newblock {\em SIAM J. Control Optim.}, 51(1):28--63, 2013.

\bibitem{CasasZuazua:2013}
E.~Casas and E.~Zuazua.
\newblock Spike controls for elliptic and parabolic {PDE}s.
\newblock {\em Systems Control Lett.}, 62(4):311--318, 2013.

\bibitem{ElSayedElKallaZiada:2010}
A.~M.~A. El-Sayed, I.~L. El-Kalla, and E.~A.~A. Ziada.
\newblock Analytical and numerical solutions of multi-term nonlinear fractional
  orders differential equations.
\newblock {\em Appl. Numer. Math.}, 60(8):788--797, 2010.

\bibitem{FuChenYang:2013}
Z.-J. Fu, W.~Chen, and H.-T. Yang.
\newblock Boundary particle method for {L}aplace transformed time fractional
  diffusion equations.
\newblock {\em J. Comput. Phys.}, 235:52--66, 2013.

\bibitem{HadidLuchko:1996}
S.~B. Hadid and Y.~F. Luchko.
\newblock An operational method for solving fractional differential equations
  of an arbitrary real order.
\newblock {\em Panamer. Math. J.}, 6(1):57--73, 1996.

\bibitem{JiangLiuTurnerBurrage:2012b}
H.~Jiang, F.~Liu, I.~Turner, and K.~Burrage.
\newblock Analytical solutions for the multi-term time-space {C}aputo-{R}iesz
  fractional advection-diffusion equations on a finite domain.
\newblock {\em J. Math. Anal. Appl.}, 389(2):1117--1127, 2012.

\bibitem{JinLazarovPasciakZhou:2013a}
B.~Jin, R.~Lazarov, J.~Pasciak, and Z.~Zhou.
\newblock Error analysis of semidiscrete finite element methods for
  inhomogeneous time-fractional diffusion.
\newblock preprint, arXiv:1307.1068, 2013.

\bibitem{JinLazarovPasciakZhou:2013}
B.~Jin, R.~Lazarov, J.~Pasciak, and Z.~Zhou.
\newblock Galerkin fem for fractional order parabolic equations with initial
  data in {$H^{-s},~0\le s \le 1$}.
\newblock Proc. 5th Conf. Numer. Anal. Appl., Springer, 24--37, 2013.

\bibitem{JinLazarovZhou:2013}
B.~Jin, R.~Lazarov, and Z.~Zhou.
\newblock Error estimates for a semidiscrete finite element method for
  fractional order parabolic equations.
\newblock {\em SIAM J. Numer. Anal.}, 51(1):445--466, 2013.

\bibitem{JinLu:2012}
B.~Jin and X.~Lu.
\newblock Numerical identification of a {R}obin coefficient in parabolic
  problems.
\newblock {\em Math. Comp.}, 81:1369--1398, 2012.

\bibitem{JinRundell:2012}
B.~Jin and W.~Rundell.
\newblock An inverse problem for a one-dimensional time-fractional diffusion
  problem.
\newblock {\em Inverse Problems}, 28(7):075010, 19, 2012.

\bibitem{Katsikadelis:2009}
J.~Katsikadelis.
\newblock Numerical solution of multi-term fractional differential equations.
\newblock {\em ZAMM Z. Angew. Math. Mech.}, 89(7):593--608, 2009.

\bibitem{KellyMcGoughMeerschaert:2008}
J.~F. Kelly, R.~J. {McGough}, and M.~M. Meerschaert.
\newblock Analytical time-domain {G}reen's functions for power-law media.
\newblock {\em J. Acoust. Soc. Am.}, 124(5):2861--2872, 2008.

\bibitem{KilbasSrivastavaTrujillo:2006}
A.~Kilbas, H.~Srivastava, and J.~Trujillo.
\newblock {\em Theory and {A}pplications of {F}ractional {D}ifferential
  {E}quations}.
\newblock Elsevier, Amsterdam, 2006.

\bibitem{LiXu:2009}
X.~Li and C.~Xu.
\newblock A space-time spectral method for the time fractional diffusion
  equation.
\newblock {\em SIAM J. Numer. Anal.}, 47(3):2108--2131, 2009.

\bibitem{LiLiuYamamoto:2013}
Z.~Li, Y.~Liu, and M.~Yamamoto.
\newblock Initial-boundary value problems for multi-term time-fractional
  diffusion equations with positive constant coefficients.
\newblock preprint, arXiv:1312.2112, 2013.

\bibitem{LiYamamoto:2013}
Z.~Li and M.~Yamamoto.
\newblock Initial-boundary value problems for linear diffusion equations with
  multiple time-fractional derivatives.
\newblock preprint, arXiv:1306.2778, 2013.

\bibitem{LinXu:2007}
Y.~Lin and C.~Xu.
\newblock Finite difference/spectral approximations for the time-fractional
  diffusion equation.
\newblock {\em J. Comput. Phys.}, 225(2):1533--1552, 2007.

\bibitem{LiuMeerschaert:2013}
F.~Liu, M.~M. Meerschaert, R.~J. {McGough}, P.~Zhuang, and X.~Liu.
\newblock Numerical methods for solving the multi-term time-fractional
  wave-diffusion equation.
\newblock {\em Frac. Cal. Appl. Anal.}, 16(1):9--25, 2013.

\bibitem{Luchko:2011}
Y.~Luchko.
\newblock Initial-boundary-value problems for the generalized multi-term
  time-fractioal diffusion equations.
\newblock {\em J. Math. Anal. Appl.}, 374(2):538--548, 2011.

\bibitem{LuchkoGorenflo:1999}
Y.~Luchko and R.~Gorenflo.
\newblock An operational method for solving fractional differential equations
  with the {C}aputo derivatives.
\newblock {\em Acta Math. Vietnam.}, 24(2):207--233, 1999.

\bibitem{McLeanThomee:2010}
W.~{McLean} and V.~Thom{\'e}e.
\newblock Maximum-norm error analysis of a numerical solution via {L}aplace
  transformation and quadrature of a fractional-order evolution equation.
\newblock {\em IMA J. Numer. Anal.}, 30(1):208--230, 2010.

\bibitem{MetzlerKlafterSokolov:1998}
R.~Metzler, J.~Klafter, and I.~M. Sokolov.
\newblock Anomalous transport in external fields: Continuous time random walks
  and fractional diffusion equations extended.
\newblock {\em Phys. Rev. E}, 58(2):1621--1633, 1998.

\bibitem{Mustapha:2011}
K.~Mustapha.
\newblock An implicit finite-difference time-stepping method for a
  sub-diffusion equation, with spatial discretization by finite elements.
\newblock {\em IMA J. Numer. Anal.}, 31(2):719--739, 2011.

\bibitem{MustaphaMcLean:2013}
K.~Mustapha and W.~McLean.
\newblock Superconvergence of a discontinuous {G}alerkin method for fractional
  diffusion and wave equations.
\newblock {\em SIAM J. Numer. Anal.}, 51(1):491--515, 2013.

\bibitem{Nigmatulin:1986}
R.~Nigmatulin.
\newblock The realization of the generalized transfer equation in a medium with
  fractal geometry.
\newblock {\em Phys. Stat. Sol. B}, 133:425--430, 1986.

\bibitem{SakamotoYamamoto:2011}
K.~Sakamoto and M.~Yamamoto.
\newblock Initial value/boundary value problems for fractional diffusion-wave
  equations and applications to some inverse problems.
\newblock {\em J.Math.Anal.Appl.}, 382(1):426--447, 2011.

\bibitem{SchumerBensonMeerschaertBaeumer:2003}
R.~Schumer, D.~A. Benson, M.~M. Meerschaert, and B.~Baeumer.
\newblock Fractal mobile/immobile solute transport.
\newblock {\em Water Res. Research.}, 39(10):1296, 13 pp., 2003.

\bibitem{Thomee:2006}
V.~Thom{\'e}e.
\newblock {\em Galerkin {F}inite {E}lement {M}ethods for {P}arabolic
  {P}roblems}, volume~25 of {\em Springer Series in Computational Mathematics}.
\newblock Springer-Verlag, Berlin, 2006.

\bibitem{XieZou:2005}
J.~Xie and J.~Zou.
\newblock Numerical reconstruction of heat fluxes.
\newblock {\em SIAM J. Numer. Anal.}, 43(4):1504--1535, 2005.

\bibitem{ZayernouriKardiadakis:2014}
M.~Zayernouri and G.~E. Karniadakis.
\newblock Exponentially accurate spectral and spectral element methods for
  fractional {ODE}s.
\newblock {\em J. Comput. Phys.}, 257, Part A:460--–480, 2014.

\bibitem{ZhangSunWu:2011}
Y.-N. Zhang, Z.-Z. Sun, and H.-W. Wu.
\newblock Error estimates of {C}rank-{N}icolson-type difference schemes for the
  subdiffusion equation.
\newblock {\em SIAM J. Numer. Anal.}, 49(6):2302--2322, 2011.

\bibitem{ZhaoXiaoXu:2013}
J.~Zhao, J.~Xiao, and Y.~Xu.
\newblock Stability and convergence of an effective finite element method for
  multiterm fractional partial differential equations.
\newblock {\em Abstr. Appl. Anal.}, pages Art. ID 857205, 10, 2013.

\end{thebibliography}
\end{document}